\documentclass[reqno,12pt]{amsart}
\usepackage{amsmath,amssymb,epsfig,color,here,wasysym,stmaryrd,slashbox,tikz-cd,hyperref}

\numberwithin{equation}{section}

\newtheorem{theorem}{Theorem}[section]

\newtheorem{lemma}[theorem]{Lemma}
\newtheorem{proposition}[theorem]{Proposition}
\newtheorem{definition}[theorem]{Definition}
\newtheorem{corollary}[theorem]{Corollary}

\theoremstyle{remark}
\newtheorem{example}[theorem]{Example}
\newtheorem{remark}[theorem]{Remark}

\newcommand{\rest}[2] {#1\left| {_{#2}}\right. }
\newcommand{\vanish}[1]{}

\newcommand{\Ss}[1]{\operatorname{\bf S}_{#1}}
\newcommand{\Aa}[1]{\operatorname{\bf A}_{#1}}
\newcommand{\Rr}[1]{\operatorname{\bf R}_{#1}}
\newcommand{\FF}{\operatorname{F}}
\newcommand{\GG}{\operatorname{G}}
\newcommand{\HH}{\operatorname{H}}
\newcommand{\TT}{\operatorname{T}}

\begin{document}

\title{Hypermaps with hyperedges of length at most $3$}

\author[Robert Cori and G\'abor Hetyei]{Robert Cori \and G\'abor Hetyei}

\address{Labri, Universit\'e Bordeaux 1, 33405 Talence Cedex, France.
\hfill\break
WWW: \tt http://www.labri.fr/perso/cori/.}

\address{Department of Mathematics and Statistics,
  UNC Charlotte, Charlotte NC 28223-0001.
WWW: \tt http://webpages.uncc.edu/ghetyei/.}

\date{\today}
\subjclass{Primary 05C30; Secondary 05C10, 05C15}

\keywords {set partitions, noncrossing partitions, genus of a hypermap,
  Tutte polynomial, Whitney polynomial, reliability polynomial, random
  cluster model}

\begin{abstract}
We study the computation of our
recently introduced Whitney polynomial and the enumeration of the
spanning hypertrees for hypermaps whose hyperedges have length at most $3$. This
is a class of hypermaps where the computation of the above invariants
depends only on the underlying (multi)hypergraph structure. We develop
deletion-contraction formulas involving six types of generalized loops
and bridges, and we prove results on special substitutions into our Whitney
polynomial. We generalize the reliability polynomial and the random
cluster model to hypermaps in general in such a way that they can be
computed using our Whitney polynomial. Finally we explicitly count the
spanning hypertrees in reciprocals of plane graphs in which every vertex
has degree at most $3$. 
  
\end{abstract}

\maketitle

\section{Introduction}
In a recent paper~\cite{Cori-Hetyei2} the present authors introduced a
Whitney polynomial for hypermaps. The Whitney polynomial $R(G;u,v)$ of a
graph is connected to its Tutte polynomial $T(G;x,y)$ via the formula
\begin{equation}
\label{eq:RT}  
R(G;u,v)=T(G;u+1,v+1).
\end{equation}
A hypermap is a pair of permutations $(\sigma,\alpha)$ whose
cycles represent the vertices and its hyperedges, but each hypermap may
also be thought of as a generalized hypergraph
topologically embedded in an orientable surface. The Whitney polynomial
we introduced satisfies some nice deletion-contraction formulas, which
allow the possibility of partially deleting and contracting a
hyperedge. A key notion for our definition of our Whitney polynomial is
the {\em refinement} where 
we replace each cycle of $\alpha$ (i.e., the hyperedges) with a set of
smaller pairwise noncrossing cycles, each such set of cycles forms a
{\em noncrossing 
  partition} as defined by Kreweras~\cite{Kreweras}. When a hypermap is
a {\em map}, i.e., a graph topologically embedded in an orientable
surface, the Whitney polynomial is an easy substitution away from the
Tutte polynomial, and the Tutte-Whitney invariant depends only on the
underlying graph, not on the way it is drawn on a surface. We have shown
by an example~\cite[Example~2.4]{Cori-Hetyei2} that allowing hyperedges
of length $4$ makes the computation of our Whitney polynomial dependent
on the topological structure: two hypermaps with the same underlying
hypergraph may have different Whitney polynomials. It is not possible to
construct such an example if we limit the size of each hyperedge to at
most~$3$. The subject of this paper is the study of such hypermaps,
which we call hypermaps with {\em short} hyperedges. The class of such
hypermaps is the largest class for which our generalization of the
Whitney polynomial of a graph is a purely combinatorial notion, depends
only on the incidence relations between hyperedges and vertices, and it
is independent of the topological structure, encoded by the cyclic order
of the points in the cycles representing the vertices and the
hyperedges. 

Our paper is structured as follows. In the Preliminaries we review the
relevant definitions and pertinent results on (collections of) hypermaps
and their Whitney polynomials. In Sections~\ref{sec:shorthc} and
\ref{sec:rec} we generalize the famous deletion-contraction formulas
stated in Corollary~\ref{cor:wrg2rec} for 
maps and graphs to hypermaps with short hyperedges. The recurrences
remain the same for ordinary edges of length two, and in
Section~\ref{sec:shorthc} we partition the hyperedges
of length three into $5$ classes: ordinary regular, simple loop, simple
bridge, double loop and double bridge. The recurrence of the Whitney
polynomials for these $5$ types of hyperedges is stated in $5$ separate
propositions in Section~\ref{sec:rec}. 

It is well-known that some particular substitutions into the Tutte
polynomial of a graph yield special graph invariants, such as the number
of spanning forests, the number of spanning subsets, the number of
independent sets. Generalizations of some of such results were stated
in~\cite{Cori-Hetyei2}. In Section~\ref{sec:subs} we add a few new
results about hypermaps with short cycles that have no analogue in graph
theory. In particular, in the case when our hypermap contains no
ordinary edge, the Whitney analogue of the trivial result stating that the Tutte
polynomial evaluated at $x=y=0$ is $0$ does not hold anymore. We get a
more complex result instead, stating that the Whitney polynomial of
hypermap having only hyperedges of length $3$
evaluated at $u=v=-1$ is a power of $(-1)$. The exponent is the
difference of the number of vertices and faces divided by $2$.

In Section~\ref{sec:rel} we generalize the reliability polynomial of a
graph to the reliability polynomial of a hypermap in a way that it can
be expressed in terms of the Whitney polynomial. The main result of this
section holds for all hypermaps, but it is likely most interesting for
hypermaps with short hyperedges where the invariants introduced are
combinatorial and not topological.

Finally, Sections~\ref{sec:a2cm}, \ref{sec:sht} and \ref{sec:3rec} are
about counting the spanning hypertrees in a hypermap having short
hyperedges. As stated in~\cite[Proposition~2.15]{Cori-Hetyei2} the
number of spanning hypertrees may be obtained by substituting $u=v=0$
into the Whitney polynomial of the hypermap. This
special instance of our Whitney polynomial is explored in greater depth
in~\cite{Cori-Hetyei}. In Section~\ref{sec:a2cm} we introduce an
operation associating a $2$-colored map to each hypermap with short
edges, and prove some of its properties. We express the number of
spanning hypertrees in terms of this associated $2$-colored map in
Section~\ref{sec:sht}. This formula takes a particularly nice form in
the case of the {\em reciprocals} of plane graphs in which the maximum
degree of a vertex is $3$. The reciprocal of a hypermap is obtained by 
by swapping the roles of the two permutations encoding the vertices and
the hyperedges, respectively. Our final Section~\ref{sec:3rec} contains
some sample calculations using our results.

\section{Preliminaries}

A {\em collection of hypermaps} is a pair $(\sigma,\alpha)$ of permutations
acting on the same set $\{1,2,\ldots,n\}$ of points. The orbits of the
permutation group $\langle \sigma,\alpha\rangle$ generated by $\sigma$
and $\alpha$ are the {\em connected components}, we denote their numbers
by $\kappa(\sigma,\alpha)$. A collection of hypermaps $(\sigma,\alpha)$
is a {\em hypermap} is $\kappa(\sigma,\alpha)=1$, i.e., the permutation
group $\langle \sigma,\alpha\rangle$ is transitive. A hypermap is a {\em
  map} if all hyperedges have at most two points. The notion of a
topological graph (that is, a graph with a fixed cyclic order of the
vertices around each edge) is very close to that of a map: the only
difference is that a map may have {\em buds} that is hyperedges
containing a single point. The cycles of
$\sigma$ are the {\em vertices}, the cycles of $\alpha$ are the {\em
  hyperedges}. The cycles of $\alpha^{-1}\sigma$ are the {\em
  faces}. The following formula, due to Jacques~\cite{Jacques}, allows
to compute the smallest genus $g(\sigma,\alpha)$ of an oriented surface
on which a hypermap $(\sigma,\alpha)$ may be drawn: 
\begin{equation}
\label{eq:genusdef}
n + 2 -2g(\sigma,\alpha) = z(\sigma) + z(\alpha) + z(\alpha^{-1}
\sigma),
\end{equation}
where $z(\pi)$ denotes the number of cycles of the permutation $\pi$. We
extend the definition of the genus to collections of hypermaps by taking
the sum of the genuses of the connected components.
A permutation
$\theta$ is a {\em refinement} of a permutation $\gamma$, if there exists a
pair of decompositions $\gamma=\gamma_1\cdots \gamma_t$  and
$\theta=\theta_1\cdots \theta_t$ such that the $\gamma_i$s are the
cycles of $\gamma$, the $\theta_i$s are products of disjoint cycles of
$\theta$, and for each $i$ the permutations $\gamma_i$ and $\theta_i$ act
on the same set of elements, and they satisfy
$g(\theta_i,\gamma_i)=0$. By a result of Cori~\cite[Theorem~1]{Cori}
this definition is equivalent to requiring that each $\theta_i$ is a
{\em noncrossing partition} with respect to the cyclic order of $\gamma_i$.
A hypermap $(\sigma,\alpha')$ {\em spans}
the hypermap $(\sigma,\alpha)$ if $\alpha'$ is a refinement of
$\alpha$. Note that not all refinements $\alpha'$ of $\alpha$ have the
property that $(\sigma,\alpha')$ is a hypermap. For refinement $\theta$ of
$\alpha$ the hypermap $(\sigma,\theta)$ is a {\em spanning hypertree} if
it is spanning, it is unicellular (that is, $z(\theta^{-1}\sigma)=0$)
and it has genus zero. 

The notions of {\em hyperdeletions} and {\em hypercontractions} were
introduced in~\cite{Cori-Hetyei}. A {\em hyperdeletion} is an operation
replacing the collection of hypermaps $(\sigma,\alpha)$ with
$(\sigma,\alpha (i,j))$ where $i$ and $j$ belong to the same cycle of
$\alpha$, that is, the transposition $(i,j)$ {\em disconnects}
$\alpha$. For maps a hyperdeletion represents the deletion of an edge
in the underlying graph. In~\cite{Cori-Hetyei} only such hyperdeletions on
maps were considered for which $(\sigma,\alpha (i,j))$ is still a
hypermap. The notion may be seamlessly 
extended to collections of hypermaps: even in collections of maps we may
delete an edge that represents a bridge in the underlying graph, thus
increasing the number of connected components by $1$. If $(i,j)$
disconnects $\alpha$, the permutation $\alpha(i,j)$ is a refinement of
$(\alpha)$.

A {\em hypercontraction} is an operation replacing the collection of
hypermaps $(\sigma,\alpha)$ with $((i,j)\sigma,(i,j)\alpha)$ where
$(i,j)$ {\em disconnects} $\alpha$ and $i$ and $j$ belong to different
cycles of $\sigma$, that is $(i,j)$ {\em connects}
$\sigma$. In~\cite{Cori-Hetyei} a more general class of 
hypercontractions was considered, the requirement that $(i,j)$ must
connect $\sigma$ was only stated for {\em topological}
hypercontractions. In this work all hypercontractions will be
topological. For maps a hypercontraction represents the contraction of a
non-loop edge in the underlying graph. This definition may also be
extended to collections of hypermaps: a hypercontraction does not change
the number of connected components or the number of faces. 

The {\em dual} of the collection of hypermaps $(\sigma,\alpha)$ is the
collection of hypermaps\\ $(\alpha^{-1}\sigma,\alpha^{-1})$,
see~\cite{Cori-Penaud}. This notion
of duality generalizes the usual duality of planar graphs, exchanging
vertices and faces. The 
{\em reciprocal} of the collections hypermaps $(\sigma,\alpha)$ is the hypermap
$(\alpha,\sigma)$. Taking the reciprocal generalizes taking the line
graph of a graph. Both operations preserve the number of connected
components, they takes hypermaps into hypermaps, and also preserve their genus.

The {\em Whitney polynomial  $R(\sigma,\alpha;u,v)$} of a
collection of hypermaps $(\sigma,\alpha)$ on a set of $n$ points is
defined in~\cite{Cori-Hetyei2} by the formula 
$$
R(\sigma,\alpha;u,v)=\sum_{\beta\leq\alpha}
u^{\kappa(\sigma,\beta)-\kappa(\sigma,\alpha)}\cdot
v^{\kappa(\sigma,\beta)+n-z(\beta)-z(\sigma)}
$$
Here the summation is over all permutations $\beta$ refining
$\alpha$. This definition generalizes the notion of the Whitney
polynomial of a map. Formulas similar to the deletion-contraction formulas for
the Tutte polynomial of a map also exist. 
Theorems~\ref{thm:wrgrec} and \ref{thm:wrgrecc} below were first stated
and shown in~\cite[Theorems~2.8 and 2.11]{Cori-Hetyei2}.    
\begin{definition}
\label{def:phik}
Let $(\sigma,\alpha)$ be a collection of hypermaps and assume that
$(1,2,\ldots,m)$ is a cycle of $\alpha$ of length at least $2$. For each
$k\in\{1,2,\ldots,m\}$ we define the collection of hypermaps
$\phi_k(H)=(\sigma_k,\alpha_k)$ where
$$
\sigma_k=
\begin{cases}
  \sigma & \mbox{if $1$ and $k$ belong to the same cycle of $\sigma$,}\\
  (1,k)\sigma & \mbox{otherwise;}\\
\end{cases}  
$$
and the permutation $\alpha_k$ is obtained from $\alpha$ by replacing
  the cycle $(1,2,\ldots,m)$  with 
  $(1)(2,\ldots,m)$ if $k\in\{1,2\}$ and 
  $(1)(2,\ldots,k-1)(k,\ldots,m)$ if $k\not\in\{1,2\}$.
\end{definition}

\begin{theorem}[\cite{Cori-Hetyei2}]
\label{thm:wrgrec}  
Let $H=(\sigma,\alpha)$ be a collection of hypermaps on the set
$\{1,2,\ldots,n\}$ and assume that $(1,2,\ldots,m)$ is a cycle of
$\alpha$ of length at least $2$. Then the Whitney polynomial $R(H;u,v)$
is given by the sum
$$
R(H;u,v)=\sum_{k=1}^m R(\phi_k(H);u,v)\cdot w_k,
$$
where each $w_k$ is a monomial from the set $\{1,u,v,uv\}$, given by the
equations 
\begin{equation}
\label{eq:wrule}
w_k=
\begin{cases}
u^{\kappa(\phi_k(H))-\kappa(H)}v & \mbox{if $k\neq 1$ and $1$ and $k$ belong to
  the same cycle of $\sigma$};\\  
u^{\kappa(\phi_k(H))-\kappa(H)}& \mbox{otherwise}.\\
\end{cases}  
\end{equation}
\end{theorem}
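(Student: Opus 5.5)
The plan is to split the sum defining $R(H;u,v)$ according to how a refinement $\beta\leq\alpha$ treats the point $1$. Every such $\beta$ restricts on $\{1,\ldots,m\}$ to a noncrossing partition of the cycle $(1,2,\ldots,m)$, with each block traversed in increasing cyclic order. We set $k=\beta(1)$, so $k=1$ exactly when $1$ is a fixed point of $\beta$. The class with $\beta(1)=k$ will be matched bijectively with the set of all refinements $\beta'$ of $\alpha_k$. Then we compare the monomials term by term and show that the $\beta$-term of $R(H)$ equals $w_k$ times the $\beta'$-term of $R(\phi_k(H))$. Summing over $k$ gives the theorem.

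\emph{Step 1 (the bijection).} For $k=1$ the refinements $\beta$ with $\beta(1)=1$ are exactly the refinements of $\alpha_1$: the remaining points $2,\ldots,m$ form a noncrossing partition of $(2,\ldots,m)$. For $k\geq 2$ we use the map $\beta'\mapsto \beta=(1,k)\beta'$, where $\beta'$ ranges over refinements of $\alpha_k$. Since $\beta'(1)=1$, multiplying by $(1,k)$ inserts $1$ into the cycle of $\beta'$ containing $k$, immediately before $k$.

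Conversely, let $\beta(1)=k$ with $\beta$ noncrossing. Then $k$ is the smallest element of the block of $1$ other than $1$. By the noncrossing property, every block meeting $\{2,\ldots,k-1\}$ lies inside $\{2,\ldots,k-1\}$. Hence $\beta'=(1,k)\beta$ fixes $1$ and is a noncrossing partition of $(2,\ldots,k-1)$ together with one of $(k,\ldots,m)$. The inverse direction must also be checked: inserting $1$ just before the minimum $k$ of a block of $(k,\ldots,m)$ keeps cyclic increasing order and noncrossingness. This careful check of the noncrossing condition is the only real obstacle, and it amounts to Kreweras' standard decomposition of noncrossing partitions by the block of $1$. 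The case $k=2$, where $\{2,\ldots,k-1\}$ is empty, should be handled consistently, using $\alpha_2=(1)(2,\ldots,m)$.

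\emph{Step 2 (comparing statistics).} The point set is unchanged, so $n$ is the same. For $k\geq2$ we have $z(\beta)=z(\beta')-1$. For connectivity, both $\langle\sigma,\beta\rangle$ and $\langle\sigma_k,\beta'\rangle$ have as orbits the finest partition that is coarser than the cycles of $\sigma$, the cycles of $\beta'$, and the pair $\{1,k\}$. Indeed, $1$ and $k$ are joined by $\beta$ in the first group, and by $\sigma_k$ or already by $\sigma=\sigma_k$ in the second. Moreover, left multiplication by $(1,k)$ does not change the orbit partition once $1$ and $k$ are joined. Hence $\kappa(\sigma,\beta)=\kappa(\sigma_k,\beta')$.

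We then split into cases.
\begin{itemize}
\item If $1$ and $k$ lie in different cycles of $\sigma$, then $z(\sigma_k)=z(\sigma)-1$. The $v$-exponents $\kappa+n-z(\beta)-z(\sigma)$ therefore agree.
\item If $1$ and $k$ lie in the same cycle, then $\sigma_k=\sigma$. The $v$-exponent for $\beta$ exceeds that for $\beta'$ by exactly one, which produces the factor $v$.
\item For $k=1$ nothing changes at all, which matches $w_1$, where $\kappa(\phi_1(H))=\kappa(H)$.
\end{itemize}

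\emph{Step 3 (the $u$-exponents).} We write
$$\kappa(\sigma,\beta)-\kappa(\sigma,\alpha)=\bigl(\kappa(\sigma_k,\beta')-\kappa(\sigma_k,\alpha_k)\bigr)+\bigl(\kappa(\phi_k(H))-\kappa(H)\bigr).$$
This yields the factor $u^{\kappa(\phi_k(H))-\kappa(H)}$ in $w_k$. Summing over all $\beta'$ for each $k$ completes the proof.
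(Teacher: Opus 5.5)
The paper does not prove this theorem; it quotes it from \cite{Cori-Hetyei2}, so there is no in-paper argument to compare with. Your proof is correct. You sort the refinements $\beta\leq\alpha$ by $k=\beta(1)$ and use the bijection $\beta'\mapsto(1,k)\beta'$ from refinements of $\alpha_k$ onto $\{\beta\leq\alpha:\beta(1)=k\}$. This is exactly the decomposition that the shape $(1)(2,\ldots,k-1)(k,\ldots,m)$ of $\alpha_k$ is built for. Your checks that $\kappa(\sigma,\beta)=\kappa(\sigma_k,\beta')$, $z(\beta)=z(\beta')-1$ and $z(\sigma_k)\in\{z(\sigma)-1,z(\sigma)\}$ produce the factor $v$, and the telescoping in Step~3 produces the factor $u^{\kappa(\phi_k(H))-\kappa(H)}$.

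Two small corrections are needed.

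First, your remark in Step~2 that $\kappa(\phi_1(H))=\kappa(H)$ is false in general. For $\sigma=(1)(2)$ and $\alpha=(1,2)$, the collection $\phi_1(H)=((1)(2),(1)(2))$ has two components. Hence $w_1=u$, and $R(H)=u\cdot 1+1\cdot 1=1+u$, as Corollary~\ref{cor:wrg2rec} requires. Likewise, Lemma~\ref{lem:psi1} has $w_1=u$ for a double bridge. Nothing in your argument uses this claim, since Step~3 already handles $k=1$ correctly, so simply delete it.

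Second, the statement also asserts $w_k\in\{1,u,v,uv\}$, that is, $0\leq\kappa(\phi_k(H))-\kappa(H)\leq 1$, and you never verify this. The check is quick:
\begin{enumerate}
\item Every cycle of $\sigma_k$ and of $\alpha_k$ lies inside an orbit of $\langle\sigma,\alpha\rangle$, because $1$ and $k$ lie on a common cycle of $\alpha$. So the orbits of $\phi_k(H)$ refine those of $H$.
\item Joining the orbits of $\phi_k(H)$ with the single block $\{1,\ldots,m\}$ recovers the orbits of $H$.
\item The block $\{1,\ldots,m\}$ meets at most two orbits of $\phi_k(H)$. For $k\geq 3$, the sets $\{2,\ldots,k-1\}$ and $\{1,k,\ldots,m\}$ each lie in a single orbit. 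For $k\leq 2$, the same holds for $\{1\}$ and $\{2,\ldots,m\}$.
\end{enumerate}
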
  

The number of connected components of $\phi_k(H)$ in
Theorem~\ref{thm:wrgrec} above may exceed $\kappa(H)$. The formula in
the next result expresses $R(H;u,v)$ in terms of Whitney polynomials of
collections of hypermaps $\psi_k(H)$ satisfying
$\kappa(H)=\kappa(\psi_k(H))$. In particular the Whitney polynomial of a
hypermap is expressed in terms of Whitney polynomials of hypermaps.
\begin{theorem}[\cite{Cori-Hetyei2}]
\label{thm:wrgrecc}  
Let $H=(\sigma,\alpha)$ be a collection of hypermaps on the set
$\{1,2,\ldots,n\}$ and assume that $(1,2,\ldots,m)$ is a cycle of
$\alpha$. Then the Whitney polynomial $R(H;u,v)$ is given by the sum
$$
R(H;u,v)=\sum_{k=1}^m R(\psi_k(H);u,v)\cdot w_k,
$$
where each monomial $w_k$ is given by the rule~(\ref{eq:wrule}) and each
$\psi_k(H)$ is a collection of hypermaps defined as follows:
$$
\psi_k(H) =
\begin{cases}
((1,k)\sigma,(1,k)\alpha(1,k-1))& \mbox{if $z((1,k)\sigma)\leq z(\sigma)$
    and $\kappa(\phi_k(H))=\kappa(H)$};\\
((1,2)(1,k)\sigma,(1,2)(1,k)\alpha)& \mbox{if $z((1,k)\sigma)\leq z(\sigma)$
    and $\kappa(\phi_k(H))=\kappa(H)+1$};\\
(\sigma,\alpha(1,k-1)(1,m)) & \mbox{if $z((1,k)\sigma)=z(\sigma)+1$
    and $\kappa(\phi_k(H))=\kappa(H)$;}\\ 
((1,2)\sigma,(1,2)\alpha(k-1,m)) &\mbox{if $z((1,k)\sigma)=z(\sigma)+1$
    and $\kappa(\phi_k(H))=\kappa(H)+1$.}\\
\end{cases}  
$$
Here we count modulo $m$.
\end{theorem}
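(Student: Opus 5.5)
We derive Theorem~\ref{thm:wrgrecc} from Theorem~\ref{thm:wrgrec}. The aim is to show, for each $k$, that $R(\phi_k(H);u,v)=R(\psi_k(H);u,v)$. The monomials $w_k$ are the same in both theorems, so the sums then agree term by term. Two facts about the Whitney polynomial will do the work.

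\emph{Invariance fact.} $R(\sigma,\alpha;u,v)$ is unchanged when two distinct components are merged by a topological operation that does not change the lattice of refinements. Concretely: if $i$ is a fixed point of $\alpha$ (a bud) and $j$ lies in another component, then passing to $((i,j)\sigma,(i,j)\alpha)$ gives a bijection between refinements of the two hyperedge permutations. We check that this bijection shifts $\kappa(\sigma,\beta)$ uniformly and preserves $\kappa+n-z(\beta)-z(\sigma)$ up to the same shift. In particular, the exponent of $u$, which is $\kappa(\sigma,\beta)-\kappa(\sigma,\alpha)$, is preserved, because both terms drop by one.

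\emph{Relabeling fact.} Conjugating $(\sigma,\alpha)$ by a permutation does not change $R$.

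We then go through the four cases. In $\phi_k(H)$ the point $1$ is a bud and the old hyperedge splits as $(2,\ldots,k-1)(k,\ldots,m)$; the vertex permutation is either $\sigma$ or $(1,k)\sigma$.

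\emph{Cases 1 and 2} ($z((1,k)\sigma)\le z(\sigma)$). Here $1$ and $k$ lie on the same vertex of $\sigma$, so $\sigma_k=\sigma$. If $\kappa$ is unchanged, we take the bud $1$ and reglue it into the cycle $(k,\ldots,m)$ in front of $k$, using a product of the form $(1,k)\sigma$ and $(1,k)\alpha(1,k-1)$. We verify with transposition calculus that this yields $\psi_k(H)$ and that the refinements correspond bijectively with the same statistics. The key observation is that adding a point to a cycle and simultaneously to a vertex through a transposition preserves both $n-z(\beta)-z(\sigma)$ and $\kappa$. If $\kappa$ jumps by one, the bud $1$ and the cycle containing $2$ lie in different components; by the invariance fact we merge them with $(1,2)$, after the analogous modification.

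\emph{Cases 3 and 4.} These are handled symmetrically. Here $\sigma_k=(1,k)\sigma$, and we undo the vertex merge by reinserting $1$ into the cycle $(2,\ldots,k-1)$, giving $\alpha(1,k-1)(1,m)$. When components increase, we additionally merge them using $(1,2)$ and $(k-1,m)$.

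The main obstacle is the bookkeeping. In each case we must confirm that the proposed bijection on refinements (inserting or removing a point in a noncrossing partition, and the induced changes in $\sigma$) preserves both exponent statistics. The first thing to try is to express every refinement of $\psi_k(H)$ restricted to the relevant hyperedge as a refinement of $\phi_k$'s hyperedges together with a fixed transposition. We then use the standard identity that multiplying by a transposition changes $z$ by exactly $\pm1$, according to whether the two points lie in the same cycle, to compare $z$ and $\kappa$ directly.
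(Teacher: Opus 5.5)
Your overall strategy is the right one: prove $R(\phi_k(H);u,v)=R(\psi_k(H);u,v)$ for each $k$, since the weights $w_k$ are the same. The case analysis, however, rests on a misreading. The product $(1,k)\sigma$ merges two cycles when $1$ and $k$ lie on different vertices, and splits one when they lie on the same vertex. So $z((1,k)\sigma)\le z(\sigma)$ means $k=1$ or $1,k$ lie on \emph{different} vertices, giving $\sigma_k=(1,k)\sigma$ in Cases 1--2, while $z((1,k)\sigma)=z(\sigma)+1$ gives $\sigma_k=\sigma$ in Cases 3--4. You have this backwards.

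More importantly, you never compute $\psi_k(H)$, and your descriptions of it (``reglue $1$ in front of $k$'', ``reinsert $1$ into $(2,\ldots,k-1)$'') are wrong. Compose right to left, as in the paper's $(1,2,3)(1,3)=(2,3)$. Then each of
\[
(1,k)\alpha(1,k-1),\qquad (1,2)(1,k)\alpha,\qquad \alpha(1,k-1)(1,m),\qquad (1,2)\alpha(k-1,m)
\]
equals $\alpha_k$ of Definition~\ref{def:phik} in the case where it is used, with $(1,1)$ the identity and $0\equiv m$. Note that $k=2$ cannot occur in Cases 2 and 4: either $\phi_2(H)$ is the hypercontraction $((1,2)\sigma,(1,2)\alpha)$, or $1,2$ share a vertex, and in neither case does $\kappa$ increase. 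So the point $1$ stays a bud: $\psi_k(H)=\phi_k(H)$ literally in Cases 1 and 3, and $\psi_k(H)=((1,2)\sigma_k,\alpha_k)$ in Cases 2 and 4. No bijection on refinements is needed. The bijections you propose, between refinements with $1$ a bud and refinements with $1$ inserted into a hyperedge, cannot exist, since inserting a point into a hyperedge strictly enlarges its set of noncrossing refinements.

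Your invariance fact is where the argument would actually break, because as stated it is false. For $\sigma=\alpha=(1)(2)$ one has $R=1$, whereas $((1,2)\sigma,(1,2)\alpha)=((1,2),(1,2))$ has $R=1+v$. The lemma you need leaves $\alpha$ untouched: if $i$ and $j$ lie in different components of $(\sigma,\alpha)$, then $R((i,j)\sigma,\alpha;u,v)=R(\sigma,\alpha;u,v)$. The refinements are literally the same, and the components of $(\sigma,\beta)$ refine those of $(\sigma,\alpha)$. Hence joining the two vertices lowers $\kappa(\cdot,\beta)$, $\kappa(\cdot,\alpha)$ and the number of vertices each by exactly one, leaving both exponents unchanged.

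To apply this lemma in Cases 2 and 4, you must also check that $1$ and $2$ lie in different components of $\phi_k(H)$. The component of $H$ containing $1,\ldots,m$ splits in $\phi_k(H)$ into at most two pieces. For $k\ge 3$ these are one piece containing $1,k,\ldots,m$ (as $1$ and $k$ share a cycle of $\sigma_k$) and one containing $2,\ldots,k-1$. For $k=1$ they are the piece of $1$ and the piece of $2,\ldots,m$. Hence an increase of $\kappa$ separates $1$ from $2$. Your relabeling fact plays no role.
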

As noted in~\cite[Example~2.9]{Cori-Hetyei2} for maps, in the special case when
$(\sigma,\alpha)$ contains a $2$-cycle, Theorem~\ref{thm:wrgrecc} yields
the familiar deletion-contraction formula.
\begin{corollary}
\label{cor:wrg2rec}  
Let $(\sigma,\alpha)$ be a collection of hypermaps and let $(1,2)$ be
a cycle of $\alpha$. Then we have
$$
R(\sigma,\alpha;u,v)=
\begin{cases}
  (1+u)R((1,2)\sigma,(1,2)\alpha);u,v)&\text{if $e$ is a bridge;}\\
  (1+v)R(\sigma,\alpha(1,2);u,v)&\text{if $e$ is a loop;}\\
  R((1,2)\sigma,(1,2)\alpha)+R(\sigma,\alpha(1,2);u,v)&\text{otherwise}.\\
\end{cases}
$$
\end{corollary}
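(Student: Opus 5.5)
We derive the corollary by specializing Theorem~\ref{thm:wrgrecc} to $m=2$, so the sum has exactly two terms, $k=1$ and $k=2$, with the hyperedge $e=(1,2)$. A \emph{bridge} means $1$ and $2$ lie on different vertices (cycles of $\sigma$) and deleting $e$ increases the number of components. A \emph{loop} means $1$ and $2$ lie on the same cycle of $\sigma$. In all cases we read indices modulo $m=2$, so for $k=2$ we have $k-1=1$ and $m=2$, and for $k=1$ we have $k-1=0\equiv 2$.

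First we treat the term $k=1$. Here $(1,k)$ is the identity, so $z((1,k)\sigma)=z(\sigma)$, placing us in the first two cases of Theorem~\ref{thm:wrgrecc}. By Definition~\ref{def:phik}, $\phi_1(H)=(\sigma,\alpha(1,2))$, i.e.\ the deletion of $e$. The rule~(\ref{eq:wrule}) gives $w_1=u^{\kappa(\phi_1(H))-\kappa(H)}$.
\begin{itemize}
\item If $e$ is not a bridge, $\kappa$ does not change. The first case then gives $\psi_1(H)=(\sigma,\alpha(1,0))=(\sigma,\alpha(1,2))$ with $w_1=1$.
\item If $e$ is a bridge, $\kappa$ increases by one. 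The second case gives $\psi_1(H)=((1,2)\sigma,(1,2)\alpha)$, the contraction, with weight $w_1=u$.
\end{itemize}

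Next we treat the term $k=2$. Now $\phi_2(H)$ has $\alpha_2$ equal to $\alpha$ with $(1,2)$ replaced by $(1)(2)$. The permutation $\sigma_2$ equals $\sigma$ if $1,2$ share a vertex, and $(1,2)\sigma$ otherwise.
\begin{itemize}
\item In the loop case, $1$ and $2$ share a cycle of $\sigma$, so $(1,2)\sigma$ splits it and $z((1,2)\sigma)=z(\sigma)+1$. The components are unchanged (a loop is never a bridge in this sense), so the third case gives $\psi_2(H)=(\sigma,\alpha(1,1)(1,2))=(\sigma,\alpha(1,2))$, with $w_2=v$ because $k\ne1$ and $1,2$ share a vertex. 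Combined with $w_1=1$, this yields $(1+v)R(\sigma,\alpha(1,2))$.
\item In the non-loop case, $z((1,2)\sigma)=z(\sigma)-1$. The component count of $\phi_2(H)$ equals $\kappa(H)$, since merging the two vertices preserves connectivity through $e$'s contraction. The first case then gives $\psi_2(H)=((1,2)\sigma,(1,2)\alpha(1,1))=((1,2)\sigma,(1,2)\alpha)$ with $w_2=1$.
\end{itemize}

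Summing the two terms gives the ``otherwise'' formula, contraction plus deletion. In the bridge case, $w_1=u$ and $\psi_1(H)$ is the contraction, which combines with the $k=2$ term to give $(1+u)R((1,2)\sigma,(1,2)\alpha)$.

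The main point requiring care is verifying the component counts $\kappa(\phi_k(H))$ in each case, and that a loop cannot simultaneously behave as a bridge. This follows because deleting a loop leaves its vertex intact, and contracting a non-loop edge merges its endpoints into one vertex, so connectivity is preserved in both situations.
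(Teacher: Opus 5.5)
Your proposal is correct and takes the same route as the paper, which obtains the corollary by specializing Theorem~\ref{thm:wrgrecc} to $m=2$, as noted in \cite[Example~2.9]{Cori-Hetyei2}. Your case-by-case identification of $w_1,w_2$ and $\psi_1(H),\psi_2(H)$, reading $(1,1)$ as the identity and indices modulo $2$, is exactly the computation the paper leaves implicit.
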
  
As a consequence, the Whitney-polynomial of a map depends only on the
underlying graph structure. This statement can not be generalized for
hypermaps and hypergraphs, for two reasons. First, the hyperedges of a hypermap
may be incident to the same vertex several times, this generalization of
the notion of a ``loop'' is usually not considered in the literature on
hypergraphs. This obstacle can be easily overcome by defining {\em
  multi-hypergraphs} whose hyperedges are multisets of vertices. The
second obstacle is more serious: as shown
in~\cite[Example~2.4]{Cori-Hetyei2} there are two hypermaps containing a
hyperedge of length $4$ having the same 
underlying hypergraph structure but different Whitney polynomials. It is
not possible to construct such an example if we allow only
{\em short} hyperedges, containing at most three points: as shown
in Section~\ref{sec:shorthc}, we can
generalize Corollary~\ref{cor:wrg2rec} to such hypermaps, obtaining
recurrences which remain the same if we replace a hyperedge
$(i_1,i_2,i_3)$ with the hyperedge $(i_1,i_3,i_2)$, leaving the vertex
permutation unchanged. As a consequence, the Whitney polynomial depends
only on the {\em underlying multi-hypergraph structure}, that is, the
multisets of vertices incident to each hyperedge. 

\section{A classification of hyperedges of length $3$}
\label{sec:shorthc}

In preparation of Section~\ref{sec:rec} where we will generalize
Corollary~\ref{cor:wrg2rec} to hypergraphs with short hyperedges, in
this section we generalize the notions of a loop and of a bridge from
edges to hyperedges of length $3$. 

\begin{definition}
We say that a collection of hypermaps $(\sigma,\alpha)$ has {\em short
  hyperedges} if each cycle of $\alpha$ has at most three elements.
\end{definition}

The recursive computation of the Tutte polynomial of a map depends on
the distinction between three types of edges: loops, bridges and
ordinary edges, which are neither loops nor bridges. In this section we
create an analogous classification of the hyperedges of length $3$. We
will use this classification in Section~\ref{sec:rec} to compute the
contribution of each hyperedge to the Whitney polynomial. We begin with
generalizing the notion of a loop. 
\begin{definition}
\label{def:loops}
Let $(\sigma,\alpha)$ be a collection of hypermaps and let $(i,j,k)$ be
a $3$-cycle of $\alpha$. We say that $(i,j,k)$ is a {\em double loop}
if all three of its points belong to the same vertex. We say that
$(i,j,k)$ is a {\em single loop} if exactly two elements of the set $\{i,j,k\}$
belong to the same vertex. We say that $(i,j,k)$ is a {\em regular
  hyperedge} if its points belong to three distinct vertices. 
\end{definition}
Clearly each hyperedge is either regular, or a single loop, or a double
loop, and these possibilities are mutually exclusive. Similarly, we
generalize the notion of a bridge as follows.
\begin{definition}
  \label{def:bridges}
Let $(\sigma,\alpha)$ be a collection of hypermaps.  
We say that a $3$-cycle $(i,j,k)$ of $\alpha$ is a {\em double bridge} if
$\kappa(\sigma,\alpha(i,k,j))=\kappa(\sigma,\alpha)+2$, it is a {\em
  single bridge} if
$\kappa(\sigma,\alpha(i,k,j))=\kappa(\sigma,\alpha)+1$, and it is an
{\em ordinary hyperedge} if
$\kappa(\sigma,\alpha(i,k,j))=\kappa(\sigma,\alpha)$.  
\end{definition}  
Clearly the properties defined in Definition~\ref{def:bridges} are also
mutually exclusive. They also cover all possibilities, as replacing
$(i,j,k)$ with $(i)(j)(k)$ increases the number of connected components
by at most $2$.

Not all $3\times 3$ possibilities created by pairing a property from
Definition~\ref{def:loops} with a property from
Definition~\ref{def:bridges} can be realized. A double loop is
necessarily ordinary, since $i,j$ and $k$ are already on the same orbit
of $\sigma$. Similarly a double bridge is necessarily regular: replacing
the hyperedge $(i,j,k)$ with $(i)(j)(k)$ can only increase the number of
connected components by two only if $i$, $j$ and $k$ belong to three
different vertices. These observations decrease the number of
possibilities by $3$. The remaining pairs are all possible, a small example
of each is shown in Table~\ref{tab:sample_types}.

\begin{table}[h]
\begin{center}
\begin{tabular}{|r||l|l|l|}
  \hline
  & regular & single loop & double loop\\
  \hline
 \hline 
ordinary&$\sigma=(1,4)(2,5,6)(3,7)$&$\sigma=(1,2,4)(3,5)$&$\sigma=(1,2,3)$\\
&$\alpha=(1,2,3)(4,5)(6,7)$&$\alpha=(1,2,3)(4,5)$&$\alpha=(1,2,3)$\\
\hline
  single bridge &$\sigma=(1,4)(2,5)(3)$&$\sigma=(1,2)(3)$&Does not\\
&$\alpha=(1,2,3)(4,5)$&$\alpha=(1,2,3)$&exist.\\
\hline
  double bridge&$\sigma=(1)(2)(3)$&Does not&Does not\\
&$\alpha=(1,2,3)$&exist&exist\\
  \hline
\end{tabular}
\end{center}
\caption{An example of all $6$ types of $3$-cycles of $\alpha$}
\label{tab:sample_types}
\end{table}

Using our observations we may also simplify our terminology, as shown in
Table~\ref{tab:types}. As observed above there is only one entry in the
row of a double bridge and in a column of a double loop. These terms
suffice by themselves. We will call a single bridge that is also a
single loop a {\em bridge-loop}, a truly ``exotic'' term for a graph
theorist. All other single bridges, respectively single loops will be
called {\em simple}. 

\begin{table}[h]
\begin{center}
\begin{tabular}{|r||c|c|c|}
  \hline
  & regular & single loop & double loop\\
  \hline
 \hline 
ordinary&ordinary regular&simple loop&double loop\\
\hline
  single bridge &simple bridge&bridge-loop& -- \\
\hline
  double bridge&double bridge& -- & --\\
  \hline
\end{tabular}
\end{center}
\caption{Simplified terminology}
\label{tab:types}
\end{table}

\section{Recurrences eliminating $3$-cycles}
\label{sec:rec}

In this section we apply Theorem~\ref{thm:wrgrecc} in the
special case when a collection of hypermaps $H=(\sigma,\alpha)$
contains a $3$-cycle. Together with Corollary~\ref{cor:wrg2rec}, the
results stated in this section allow us to recursively compute the
Whitney polynomial of any hypermap with short hyperedges. To alleviate
our notation, throughout this section we will use the shorthand
$R(\sigma,\alpha)$ for $R(\sigma,\alpha;u,v)$.   
Without loss of generality we may assume that our hypermap contains the 
cycle $(1,2,3)$. Furthermore, after a cyclic rotation of the indices,
if necessary, we may assume the following:
\begin{enumerate}
\item If $(1,2,3)$ is a simple or double loop then $1$ and $2$ belong to
  the same cycle of $(1,2,3)$.
\item Unless $(1,2,3)$ is a double bridge, deleting $(1,3)$ (to obtain
  $(1,2,3)(1,3)=(2,3)$) does not
  increase the number of connected components.
\end{enumerate}
We will maintain the above assumptions throughout this section without
repeating them in each statement. The main result of this section is the
following theorem. 
\begin{theorem}
\label{thm:reductions}  
 Consider a hypermap $H = (\sigma,\alpha)$ such that $(1,2,3)$ is a
 cycle of $\alpha$. Let $\alpha'=\alpha(1,3,2)$ be the permutation
 obtained by replacing the cycle $(1,2,3)$ with $(1)(2)(3)$ in $\alpha$. 
Then we
have the following recurrences. 
 \begin{enumerate}
\item If $(1,2,3)$ is a double loop then
$R(\sigma,\alpha)=(1+3v+v^2)\cdot R(\sigma,\alpha')$.
\item If $(1,2,3)$ is a double bridge then
$R(\sigma,\alpha)=(1+3u+u^2)\cdot R((1,2)(1,3)\sigma,\alpha')$. 
\item If $(1,2,3)$ is a simple loop where $1$ and $2$
belong to the same cycle of $\sigma$ then 
$$
R(\sigma,\alpha)=(1+v)\cdot
\left(R((2,3)\sigma,\alpha')+R(\sigma,\alpha')\right)
+R((1,3)\sigma,\alpha').
$$
\item If $(1,2,3)$ is a simple bridge where
$\kappa(\sigma,\alpha(2,3))=\kappa(\sigma,\alpha)+1$ then 
$$
R(\sigma,\alpha)=(1+u)\cdot \left(R((2,3)\sigma,\alpha')
+R((1,2)(1,3)\sigma,\alpha') 
  \right)+R((1,3)\sigma,\alpha').
$$
\item If $(1,2,3)$ is a bridge-loop where $1$ and $2$
belong to the same cycle of $\sigma$ then 
$$
R(\sigma,\alpha)=(1+u)\cdot(1+v)
\cdot R((2,3)\sigma,\alpha')+R((1,3)\sigma,\alpha').
$$
\item If $(1,2,3)$ is an ordinary regular hyperedge then 
\begin{align*}
R(\sigma,\alpha)&=R(\sigma,\alpha')+R((2,3)\sigma,\alpha')+R((1,2)\sigma,\alpha')\\
&+R((1,2)(1,3)\sigma,\alpha')+R((1,3)\sigma,\alpha').  
\end{align*}
 \end{enumerate}  
\end{theorem}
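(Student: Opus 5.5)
The plan is to apply Theorem~\ref{thm:wrgrec} to the $3$-cycle $(1,2,3)$ with $m=3$. This writes $R(\sigma,\alpha)$ as a sum of three terms.
\begin{itemize}
\item For $k=1$ the collection is $\phi_1(H)=(\sigma,\alpha_1)$, where $\alpha_1$ replaces $(1,2,3)$ by $(1)(2,3)$, with weight $w_1=u^{\kappa(\phi_1(H))-\kappa(H)}$.
\item For $k=2$ the collection has the same hyperedge permutation $\alpha_1$. Its vertex permutation is $\sigma$ if $1,2$ lie on the same vertex and $(1,2)\sigma$ otherwise. The weight carries an extra factor $v$ in the first case.
\item For $k=3$ the hyperedge permutation is already $\alpha'$. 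The vertex permutation is $\sigma$ or $(1,3)\sigma$, with weight $v$ or $1$ times the appropriate power of $u$.
\end{itemize}
In the first two terms the $3$-cycle has become the $2$-cycle $(2,3)$. I then apply Corollary~\ref{cor:wrg2rec} (its collection version, since $\phi_k(H)$ may be disconnected) to that $2$-cycle. This expresses everything as at most five Whitney polynomials of collections of the form $(\rho\sigma,\alpha')$ with $\rho\in\{\mathrm{id},(2,3),(1,2),(1,2)(1,3),(1,3)\}$. These five correspond exactly to the five noncrossing partitions of $\{1,2,3\}$, which is the shape of case~(6). As a sanity check I would compare with the direct expansion of the defining sum: every refinement $\beta\leq\alpha$ restricts on $\{1,2,3\}$ to one of these five partitions.

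The body of the proof is then a case analysis over the six types of Table~\ref{tab:types}. In each case I must determine three things:
\begin{enumerate}
\item which of the transpositions $(1,2)$, $(1,3)$, $(2,3)$ join distinct vertices, which decides the $v$ factors and whether a ``contraction'' really is one;
\item how $\kappa$ changes, which decides the $u$ exponents;
\item whether $(2,3)$ is a loop, a bridge, or ordinary in $\phi_1(H)$ and $\phi_2(H)$.
\end{enumerate}
The normalizations fixed before the theorem are designed to simplify this. With $1,2$ on the same vertex in the loop cases, and deleting $(1,3)$ not disconnecting except for double bridges, the weight $w_3$ is always a pure $1$ or $v$, and the $k=2$ term is either a contraction or a $v$-weighted copy of the $k=1$ term.

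Two identifications are then needed. First, when $1$ and $j$ already lie on the same vertex, a term like $R((1,j)\sigma,\cdot)$ is not a contraction; instead the corresponding term equals $v\cdot R(\sigma,\cdot)$, and these are gathered into factors such as $(1+3v+v^2)$ in the double loop case. In that case all three $\phi_k$ keep $\sigma$, $(2,3)$ is a loop, and the count is $(1+v)+v(1+v)+v=1+3v+v^2$. Second, when removing part of the hyperedge disconnects, Corollary~\ref{cor:wrg2rec} yields factors $(1+u)$, and the products $(1,2)(1,3)\sigma$ identify with successive contractions. The double bridge case gives $1+3u+u^2$ by the mirror computation, or by duality. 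The bridge-loop and simple cases are mixtures.

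The main obstacle is this bookkeeping of connected components. I need to check, for instance in the simple bridge case, that the normalization forces $\kappa(\sigma,\alpha(2,3))=\kappa+1$ to make $(2,3)$ a bridge in exactly the right subcollections, producing the two $(1+u)$ terms and no stray powers of $u$. I also need that multiplying $\sigma$ by transpositions across distinct components merges components exactly as the $u$ exponents require. I would handle this by tracking the partition of $\{1,2,3\}$ into components of $(\sigma,\alpha')$ in each case and verifying each of the five terms individually.
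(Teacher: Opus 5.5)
Your outline is the paper's: expand along the $3$-cycle, apply Corollary~\ref{cor:wrg2rec} to the residual $2$-cycle $(2,3)$, and do a case analysis under the two normalizations. The one difference is that you expand with Theorem~\ref{thm:wrgrec} (the $\phi_k$), whereas the paper uses Theorem~\ref{thm:wrgrecc} (the $\psi_k$). In cases (1) and (3)--(6) this changes nothing. There $\kappa(\phi_k(H))=\kappa(H)$ for $k=1,2,3$, so $\phi_k(H)=\psi_k(H)$ and every $w_k$ is $1$ or $v$. Your bookkeeping then reproduces the stated formulas term by term.

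The gap is case (2). For a double bridge, $\kappa(\phi_1(H))=\kappa(\phi_3(H))=\kappa(H)+1$, so $w_1=w_3=u$; your claim that $w_3$ is always $1$ or $v$ fails here. Your route then gives
\[
u(1+u)\,R((2,3)\sigma,\alpha')+(1+u)\,R((1,3,2)\sigma,\alpha')+u\,R((1,3)\sigma,\alpha').
\]
This combines three \emph{different} collections, with $\kappa(H)+1$, $\kappa(H)$ and $\kappa(H)+1$ components respectively. So it is not the mirror of the double loop computation, where the three collections literally coincide.

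Duality cannot rescue it either, since $R(H^*;u,v)=R(H;v,u)$ fails in positive genus. For example, take the one-vertex torus map $\sigma=(1,3,2,4)$, $\alpha=(1,2)(3,4)$. Its dual is again a one-vertex map with two loops, so both have $R=(1+v)^2$.

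The missing ingredient is this fact: if $a,b$ lie in different components of $(\sigma,\alpha)$, then $R((a,b)\sigma,\alpha)=R(\sigma,\alpha)$. The reason is that for every $\beta\le\alpha$, the numbers $\kappa(\cdot,\beta)$, $\kappa(\cdot,\alpha)$ and $z(\sigma)$ each drop by exactly one, so both exponents are unchanged. Apply it with $(a,b)=(1,3)$ to $((2,3)\sigma,\alpha')$ and with $(a,b)=(1,2)$ to $((1,3)\sigma,\alpha')$. Using $(1,3)(2,3)=(1,2)(1,3)=(1,3,2)$, all three terms become $R((1,2)(1,3)\sigma,\alpha')$, and the coefficient $1+3u+u^2$ follows.

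Alternatively, switch to Theorem~\ref{thm:wrgrecc} as the paper does; it has this merging built in. There $\psi_1(H)=\psi_2(H)=((1,2)\sigma,(1,2)\alpha)$ and $\psi_3(H)=((1,2)(1,3)\sigma,(1,2)(1,3)\alpha)$. Only then is the computation a genuine mirror of the double loop case.
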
  
We will prove Theorem~\ref{thm:reductions} in the rest of the section
through a sequence of lemmas and propositions. Theorems~\ref{thm:wrgrec} and
\ref{thm:wrgrecc} take the following form:  
\begin{align}
R(H)&=R(\phi_1(H))\cdot w_1+R(\phi_2(H))\cdot
w_2+R(\phi_3(H))\cdot w_3;\\
R(H)&=R(\psi_1(H))\cdot w_1+R(\psi_2(H))\cdot
w_2+R(\psi_3(H))\cdot w_3. 
\end{align}
Here $\phi_1(H)=(\sigma_1,\alpha_1)$ satisfies $\sigma_1=\sigma$ and
$\alpha_1$ is obtained from $\alpha$ by replacing $(1,2,3)$ with
$(1)(2,3)$. By our second assumption we obtain the following.
\begin{lemma}
\label{lem:psi1}
If $(1,2,3)$ is not a double bridge then we have $w_1=1$ and
$\psi_1(H)=(\sigma,\alpha(1,3))$. If $(1,2,3)$ is a double bridge then
$w_1=u$ and $\psi_1(H)=((1,2)\sigma,(1,2)\alpha)$ hold. 
\end{lemma}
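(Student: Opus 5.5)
We simply specialize Theorem~\ref{thm:wrgrecc} and rule~(\ref{eq:wrule}) to $m=3$ and $k=1$. For $k=1$ we have $\sigma_1=\sigma$, since $1$ and $1$ trivially lie on the same cycle of $\sigma$. Also $(1,1)$ is the identity, so $z((1,1)\sigma)=z(\sigma)$ and we are always in the first two cases of the definition of $\psi_k(H)$. For $w_1$, rule~(\ref{eq:wrule}) excludes the factor $v$ when $k=1$, so $w_1=u^{\kappa(\phi_1(H))-\kappa(H)}$. Everything therefore reduces to computing $\kappa(\phi_1(H))-\kappa(H)$. Here $\phi_1(H)=(\sigma,\alpha_1)$, where $\alpha_1$ replaces $(1,2,3)$ by $(1)(2,3)$. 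A direct multiplication gives $\alpha_1=\alpha(1,3)$, because $(1,2,3)(1,3)$ fixes $1$ and swaps $2,3$.

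Suppose first that $(1,2,3)$ is not a double bridge. Assumption (2) says exactly that $\kappa(\sigma,\alpha(1,3))=\kappa(\sigma,\alpha)$, so $\kappa(\phi_1(H))=\kappa(H)$ and $w_1=u^0=1$. The first case of Theorem~\ref{thm:wrgrecc} gives $\psi_1(H)=((1,1)\sigma,(1,1)\alpha(1,0))$ with indices modulo $3$, so $0\equiv 3$ and $(1,k-1)=(1,3)$. Since $(1,1)$ is the identity, $\psi_1(H)=(\sigma,\alpha(1,3))$, as claimed.

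Suppose now that $(1,2,3)$ is a double bridge, so $\kappa(\sigma,\alpha(1,3,2))=\kappa(H)+2$. We first show that $\kappa(\sigma,\alpha(1,3))=\kappa(H)+1$. Splitting the hyperedge into $(1)(2,3)$ and then into $(1)(2)(3)$, each step raises $\kappa$ by at most one, because each step separates only one point, or one transposition worth of connection. The total increase is $2$, so each step must increase $\kappa$ by exactly one. In particular $\kappa(\phi_1(H))=\kappa(H)+1$, hence $w_1=u$. The second case of Theorem~\ref{thm:wrgrecc} then gives $\psi_1(H)=((1,2)(1,1)\sigma,(1,2)(1,1)\alpha)=((1,2)\sigma,(1,2)\alpha)$.

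The only step needing any care is the claim that each intermediate refinement raises $\kappa$ by at most one. I would justify it by noting that passing from $(2,3)$ to $(2)(3)$ removes a single transposition generator relation, which can split at most one orbit of $\langle\sigma,\alpha_1\rangle$ into two. The same holds for passing from $(1,2,3)$ to $(1)(2,3)$, which is multiplication by the transposition $(1,3)$ disconnecting a cycle. The rest is bookkeeping with indices modulo $m=3$.
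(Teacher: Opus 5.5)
Your proof is correct and follows the paper's approach: specialize Theorem~\ref{thm:wrgrecc} and rule~(\ref{eq:wrule}) to $k=1$, using $\sigma_1=\sigma$, $\alpha_1=\alpha(1,3)$ and $(1,k-1)=(1,3)$ modulo $3$, and invoke assumption~(2) when $(1,2,3)$ is not a double bridge. Your argument that each of the two splitting steps raises $\kappa$ by at most one, which forces $\kappa(\phi_1(H))=\kappa(H)+1$ for a double bridge, supplies a detail that the paper leaves implicit.
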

Next we compute $\phi_2(H)=(\sigma_2,\alpha_2)$, $w_2$ and $\psi_2(H)$. By
definition $\alpha_2=\alpha_1$ holds. If $1$ and $2$ belong
to the same cycle of $\sigma$ then $w_2=v$ holds, in all other cases we
have have $w_2=1$. Finally $\psi_2(H)$ is obtained from $H$ by deleting
$(1,3)$ if $w_2=v$, in all other cases $\psi_2(H)$ is obtained from $H$
by contracting $(1,2)$. Keeping in mind our first assumption, we have
the following.
\begin{lemma}
  \label{lem:psi2}
If $(1,2,3)$ is a single or double loop then $w_2=v$ and
$\psi_2(H)=(\sigma,\alpha(1,3))$ hold. In all other cases we have
$w_2=1$ and $\psi_2(H)=((1,2)\sigma,(1,2)\alpha)$.   
\end{lemma}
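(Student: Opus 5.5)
We specialize Definition~\ref{def:phik} and Theorems~\ref{thm:wrgrec}, \ref{thm:wrgrecc} to $m=3$ and $k=2$, where the cycle is $(1,2,3)$. For $k=2$ the definition gives $\alpha_2=\alpha_1$: the cycle $(1,2,3)$ is replaced by $(1)(2,3)$. The vertex permutation is $\sigma_2=\sigma$ if $1$ and $2$ lie in the same cycle of $\sigma$, and $\sigma_2=(1,2)\sigma$ otherwise. Rule~(\ref{eq:wrule}) gives $w_2=u^{\kappa(\phi_2(H))-\kappa(H)}v$ in the first case and $w_2=u^{\kappa(\phi_2(H))-\kappa(H)}$ in the second. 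So the proof splits according to whether $(1,2,3)$ is a loop (single or double) or not. By our first standing assumption, $1$ and $2$ lie on the same vertex exactly in the loop case.

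First we show the exponent of $u$ is always $0$, i.e.\ $\kappa(\phi_2(H))=\kappa(H)$. In the loop case, $\phi_2(H)=(\sigma,\alpha_1)$. Removing $1$ from the hyperedge cannot disconnect anything, because $1$ is still joined to $2$ through the common cycle of $\sigma$; hence $w_2=v$. In the non-loop case, $\phi_2(H)=((1,2)\sigma,\alpha_1)$. The points $1$ and $2$ are now on the same vertex, and $2,3$ are still on one hyperedge. So every orbit of $\langle\sigma,\alpha\rangle$ remains an orbit, and the components are unchanged; hence $w_2=1$.

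Next we identify $\psi_2(H)$ from the four cases of Theorem~\ref{thm:wrgrecc}, taking $k=2$ and $k-1=1$, modulo $3$. Since $\kappa(\phi_2(H))=\kappa(H)$, only the first and third cases can occur.
\begin{itemize}
\item Loop case: $(1,2)\sigma$ splits the common cycle, so $z((1,2)\sigma)=z(\sigma)+1$. The third case applies, giving $\psi_2(H)=(\sigma,\alpha(1,1)(1,3))=(\sigma,\alpha(1,3))$, where $(1,1)$ is read as the identity.
\item Non-loop case: $(1,2)\sigma$ merges two cycles, so $z((1,2)\sigma)\le z(\sigma)$. The first case applies, giving $\psi_2(H)=((1,2)\sigma,(1,2)\alpha(1,1))=((1,2)\sigma,(1,2)\alpha)$.
\end{itemize}
These agree with the verbal description preceding the lemma: delete $(1,3)$ when $w_2=v$, contract $(1,2)$ otherwise.

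The main obstacle is the degenerate index convention in Theorem~\ref{thm:wrgrecc} at $k=2$, where the factor $(1,k-1)=(1,1)$ must be read as the identity. We also need to check that $(1,2)\alpha$ does replace $(1,2,3)$ by $(1)(2,3)$ in the chosen multiplication convention. I would verify both directly by computing $(1,2)(1,2,3)$ and $(1,2,3)(1,3)$ under the paper's composition order, the same order already used in Lemma~\ref{lem:psi1}.
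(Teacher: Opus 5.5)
Your proposal is correct and follows the paper's route: you specialize Definition~\ref{def:phik} and Theorems~\ref{thm:wrgrec}, \ref{thm:wrgrecc} to $m=3$, $k=2$, and use the first standing assumption to identify ``$1$ and $2$ on the same vertex'' with the loop case. You also make explicit two points the paper leaves implicit: that $\kappa(\phi_2(H))=\kappa(H)$ in both cases, so only the first and third cases of Theorem~\ref{thm:wrgrecc} occur, and that the degenerate factor $(1,k-1)=(1,1)$ is to be read as the identity, consistent with the right-to-left composition the paper uses.
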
  
Finally, let us compute $\phi_3(H)=(\sigma_3,\alpha_3)$, $w_3$ and
$\psi_3(H)$. The permutation $\alpha_3$ is obtained from $\alpha$  by
replacing $(1,2,3)$ with $(1)(2)(3)$. We have 
$\sigma_3=\sigma$ only if $1$ and $3$ belong to the same cycle of
$\sigma$. By our first assumption, this may only happen when $(1,2,3)$
is a double loop. If $(1,2,3)$ is a double loop then $\sigma_3=\sigma$
and $w_3=v$ and $\psi_3(H)=\phi_3(H)$. In all other cases we have
$\sigma_3=(1,3)\sigma$.
\begin{lemma}
\label{lem:psi3} If $(1,2,3)$ is a double loop then we have $w_3=v$ and
$\psi_3(H)=\phi_3(H)=(\sigma,\alpha(1,3,2))$. In all other cases we have 
$\phi_3(H)=((1,3)\sigma,(1,3)\alpha(1,2))$,
$w_3=u^{\kappa(\phi_3(H))-\kappa(H)} $ and  
$$
\psi_3(H) =
\begin{cases}
((1,3)\sigma,(1,3)\alpha(1,2))& \mbox{if $\kappa(\phi_3(H))=\kappa(H)$},\\
((1,2)(1,3)\sigma,(1,2)(1,3)\alpha)& \mbox{if 
    $\kappa(\phi_3(H))=\kappa(H)+1$}.\\
\end{cases}  
$$
\end{lemma}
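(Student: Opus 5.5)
The plan is to specialize Definition~\ref{def:phik} and Theorem~\ref{thm:wrgrecc} to $k=m=3$, using our first standing assumption to decide when $1$ and $3$ lie on the same vertex. I expect every step to be a direct permutation computation; the only place needing an argument is the vertex case split, handled first.

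\emph{Step 1: when do $1$ and $3$ share a vertex?} If $(1,2,3)$ is a double loop, all three points lie on one cycle of $\sigma$. If it is a single loop, the first assumption puts $1$ and $2$ on the same vertex. Since exactly two of the three points share a vertex, $3$ then lies on a different vertex from $1$. If the hyperedge is regular, all three points lie on distinct vertices. Hence $1$ and $3$ lie on the same cycle of $\sigma$ exactly when $(1,2,3)$ is a double loop.

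\emph{Step 2: compute $\phi_3(H)$ and $w_3$.} For $k=3$ and $m=3$, Definition~\ref{def:phik} replaces $(1,2,3)$ by $(1)(2)(3)$, giving $\alpha_3=\alpha(1,3,2)$. I will check that $(1,3)\alpha(1,2)$ has the same restriction to $\{1,2,3\}$ by tracing each point: for instance $1\mapsto2\mapsto3\mapsto1$, and likewise for $2$ and $3$. Composing right to left, $(1,2)(1,3)=(1,3,2)$, so $\alpha(1,2)(1,3)=\alpha(1,3,2)$ as well.

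By Step~1, $\sigma_3=\sigma$ in the double loop case and $\sigma_3=(1,3)\sigma$ otherwise. Rule~(\ref{eq:wrule}) then gives $w_3$:
\begin{itemize}
\item in the double loop case, $k=3\neq1$ and $1,3$ share a vertex, so $w_3=u^{\kappa(\phi_3(H))-\kappa(H)}v$;
\item in all other cases, $w_3=u^{\kappa(\phi_3(H))-\kappa(H)}$.
\end{itemize}
In the double loop case, deleting the hyperedge cannot disconnect anything, since $1,2,3$ remain joined through $\sigma$. So $\kappa(\phi_3(H))=\kappa(H)$ there and $w_3=v$.

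\emph{Step 3: read off $\psi_3(H)$ from Theorem~\ref{thm:wrgrecc}.} In the double loop case $(1,3)$ splits a cycle of $\sigma$, so $z((1,3)\sigma)=z(\sigma)+1$. Together with $\kappa(\phi_3(H))=\kappa(H)$, we are in the third case, giving $\psi_3(H)=(\sigma,\alpha(1,2)(1,3))=(\sigma,\alpha(1,3,2))=\phi_3(H)$.

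Otherwise $(1,3)$ merges two vertices, so $z((1,3)\sigma)=z(\sigma)-1\le z(\sigma)$, and the first two cases of Theorem~\ref{thm:wrgrecc} apply:
\begin{itemize}
\item if $\kappa(\phi_3(H))=\kappa(H)$, we get $\psi_3(H)=((1,3)\sigma,(1,3)\alpha(1,2))$, which coincides with $\phi_3(H)$ by Step~2;
\item if $\kappa(\phi_3(H))=\kappa(H)+1$, we get $((1,2)(1,3)\sigma,(1,2)(1,3)\alpha)$.
\end{itemize}
This is exactly the stated case split.

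I do not expect a real obstacle. The points needing care are the composition convention (right to left, consistent with $\alpha'=\alpha(1,3,2)$ in Theorem~\ref{thm:reductions}) and the observation that Theorem~\ref{thm:wrgrecc} only allows $\kappa(\phi_3(H))-\kappa(H)\in\{0,1\}$, which the theorem already supplies.
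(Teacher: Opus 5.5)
Your proposal is correct and takes the same approach as the paper. The paper justifies the lemma in the paragraph just before it, by specializing Definition~\ref{def:phik} and Theorem~\ref{thm:wrgrecc} to $k=m=3$ and using the first standing assumption to see that $1$ and $3$ share a vertex only for a double loop. Your version makes explicit the details the paper leaves implicit: the identities $(1,3)\alpha(1,2)=\alpha(1,2)(1,3)=\alpha(1,3,2)$, the counts $z((1,3)\sigma)=z(\sigma)\pm 1$ that select the case of Theorem~\ref{thm:wrgrecc}, and the fact that $\kappa$ is unchanged in the double-loop case.
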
  
Using the above lemmas we are ready to prove the six deletion-contraction
rules stated in Theorem~\ref{thm:reductions} in the sequence of the next six
propositions. 

\begin{proposition}
\label{prop:dloop}
If $(1,2,3)$ is a double loop then
$$
R(\sigma,\alpha)=(1+3v+v^2)\cdot R(\sigma,\alpha(1,3,2)).
$$
\end{proposition}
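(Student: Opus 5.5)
We apply Theorem~\ref{thm:wrgrecc} to the $3$-cycle $(1,2,3)$ and read off its three terms from Lemmas~\ref{lem:psi1}, \ref{lem:psi2} and \ref{lem:psi3}. Since a double loop is not a double bridge, Lemma~\ref{lem:psi1} gives $w_1=1$ and $\psi_1(H)=(\sigma,\alpha(1,3))$. Lemma~\ref{lem:psi2} gives $w_2=v$ and $\psi_2(H)=(\sigma,\alpha(1,3))$. Lemma~\ref{lem:psi3} gives $w_3=v$ and $\psi_3(H)=(\sigma,\alpha(1,3,2))$. Hence
$$
R(\sigma,\alpha)=(1+v)\,R(\sigma,\alpha(1,3))+v\,R(\sigma,\alpha(1,3,2)).
$$

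The remaining task is to evaluate $R(\sigma,\alpha(1,3))$. In $\alpha(1,3)$ the cycle $(1,2,3)$ has become $(1)(2,3)$, so $(2,3)$ is a $2$-cycle whose two points lie in the same cycle of $\sigma$ (all of $1,2,3$ lie in one vertex). It is therefore a loop in the sense of Corollary~\ref{cor:wrg2rec}, and the loop case gives
$$
R(\sigma,\alpha(1,3))=(1+v)\,R(\sigma,\alpha(1,3)(2,3)).
$$
A direct check gives $\alpha(1,3)(2,3)=\alpha(1,3,2)$: on the affected points, $(1,2,3)(1,3)(2,3)$ fixes $1$, $2$ and $3$. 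So this is the same $\alpha'$.

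Substituting yields
$$
R(\sigma,\alpha)=\bigl((1+v)^2+v\bigr)R(\sigma,\alpha')=(1+3v+v^2)\,R(\sigma,\alpha').
$$

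The only delicate point is making sure the loop case of Corollary~\ref{cor:wrg2rec} legitimately applies. The corollary is phrased for a cycle $(1,2)$, but it holds for any $2$-cycle after relabeling. We also need that applying the loop case does not depend on connectivity. This holds here: deleting a loop never changes the number of components, because $2$ and $3$ remain on the same orbit of $\sigma$. The rest is mechanical bookkeeping of the three lemmas.
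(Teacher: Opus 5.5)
Your proof is correct and follows the paper's argument essentially verbatim. The paper also reads off $w_1=1$, $w_2=w_3=v$ and the three $\psi_k(H)$ from Lemmas~\ref{lem:psi1}--\ref{lem:psi3}, reduces the loop $(2,3)$ in $(\sigma,\alpha(1,3))$ via the loop case of Corollary~\ref{cor:wrg2rec}, and sums $(1+v)+v(1+v)+v=1+3v+v^2$. The only differences are cosmetic: you group the $\psi_1$ and $\psi_2$ terms before applying the loop recurrence, and you explicitly check $\alpha(1,3)(2,3)=\alpha(1,3,2)$, which the paper uses without comment.
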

\begin{proof}
By Lemma~\ref{lem:psi1} we have $w_1=1$ and $\psi_1(H)=(\sigma,\alpha(1,3))$.  
Here $\psi_1(H)$ contains the
edge $(1,2,3)(1,3)=(2,3)$ which is a loop. Hence we obtain
$$
R(\psi_1(H))\cdot w_1=(1+v)\cdot R(\sigma,\alpha(1,3)(2,3)).
$$
By Lemma~\ref{lem:psi2} we have $w_2=v$ and $\psi_2(H)=(\sigma,\alpha(1,3))$.
Similarly to the previous computation we obtain 
$$
R(\psi_2(H))\cdot w_2=(1+v)v\cdot R(\sigma,\alpha(1,3)(2,3)).
$$
By Lemma~\ref{lem:psi3} we have $w_3=v$ and
$\psi_3(H)=\phi_3(H)=(\sigma,\alpha(1,3,2))$. Hence we obtain
$$
R(\psi_3(H))\cdot w_3=v\cdot R(\sigma,\alpha(1,3)(2,3)).
$$
The sum of the three display formulas in the proof yields the stated
equality. 
\end{proof}
The next statement is in a sense a dual of Proposition~\ref{prop:dloop} above.
\begin{proposition}
\label{prop:dbridge}
If $(1,2,3)$ is a double bridge then
$$
R(\sigma,\alpha)=(1+3u+u^2)\cdot R((1,3,2)\sigma,(1,3,2)\alpha).
$$
\end{proposition}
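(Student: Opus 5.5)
The plan is to follow the proof of Proposition~\ref{prop:dloop}: apply Theorem~\ref{thm:wrgrecc} to the cycle $(1,2,3)$, compute the three terms $R(\psi_k(H))\cdot w_k$ with Lemmas~\ref{lem:psi1}, \ref{lem:psi2} and \ref{lem:psi3}, and then reduce each term once more with Corollary~\ref{cor:wrg2rec}. Throughout I use that in a double bridge the points $1,2,3$ lie on three distinct vertices. Moreover, removing the hyperedge splits the component of $H$ into three components, one containing each of the vertices of $1$, $2$ and $3$.

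\emph{The term $k=1$.} By Lemma~\ref{lem:psi1}, $w_1=u$ and $\psi_1(H)=((1,2)\sigma,(1,2)\alpha)$. A direct computation gives $(1,2)(1,2,3)=(2,3)$. So in $\psi_1(H)$ the hyperedge $(1,2,3)$ is replaced by the $2$-cycle $(2,3)$, and the vertices of $1$ and $2$ are merged. I claim $(2,3)$ is a bridge in $\psi_1(H)$. The reason is that the part of the hypermap attached to the vertex of $3$ was joined to the rest only through the hyperedge $(1,2,3)$, and the merge did not involve it. Corollary~\ref{cor:wrg2rec} then gives
$$R(\psi_1(H))=(1+u)\,R((2,3)(1,2)\sigma,(2,3)(1,2)\alpha).$$
Next I check the permutation identities. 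We have $(2,3)(1,2)=(1,3,2)$, and $(1,3,2)\alpha=\alpha(1,3,2)$ because $(1,3,2)$ commutes with the cycle $(1,2,3)$ of $\alpha$; this is exactly $\alpha'$. Hence this term equals $u(1+u)\,R((1,3,2)\sigma,(1,3,2)\alpha)$.

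\emph{The term $k=2$.} A double bridge is not a loop, so Lemma~\ref{lem:psi2} gives $w_2=1$ and $\psi_2(H)=((1,2)\sigma,(1,2)\alpha)$. This is the same collection of hypermaps as $\psi_1(H)$. The same argument therefore yields the contribution $(1+u)\,R((1,3,2)\sigma,(1,3,2)\alpha)$.

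\emph{The term $k=3$.} Here $1$ and $3$ lie on different vertices, so $\phi_3(H)=((1,3)\sigma,\alpha')$. In $\phi_3(H)$ the components of $1$ and $3$ have been merged, while the component of $2$ stays separate. Hence $\kappa(\phi_3(H))=\kappa(H)+1$, and Lemma~\ref{lem:psi3} gives $w_3=u$ and $\psi_3(H)=((1,2)(1,3)\sigma,(1,2)(1,3)\alpha)$. Since $(1,2)(1,3)=(1,3,2)$, this term is $u\,R((1,3,2)\sigma,(1,3,2)\alpha)$.

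Adding the three contributions gives the factor $u(1+u)+(1+u)+u=1+3u+u^2$, which is the claimed identity. The step needing the most care is the justification that $(2,3)$ is a bridge in $\psi_1(H)$, together with the component count $\kappa(\phi_3(H))=\kappa(H)+1$. I would argue both directly from the definition of a double bridge: the three vertices of $1,2,3$ end up in three distinct components of $(\sigma,\alpha')$, and each transposition merges exactly the components it touches. The remaining work is the routine verification of the permutation products, and noting that $(1,3,2)\sigma$ in the statement coincides with the $(1,2)(1,3)\sigma$ of Theorem~\ref{thm:reductions}.
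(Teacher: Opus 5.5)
Your proposal is correct and is essentially the paper's own proof. It applies Theorem~\ref{thm:wrgrecc} through Lemmas~\ref{lem:psi1}, \ref{lem:psi2} and \ref{lem:psi3}, reduces the two identical terms $\psi_1(H)=\psi_2(H)$ with the bridge case of Corollary~\ref{cor:wrg2rec}, and sums $u(1+u)+(1+u)+u=1+3u+u^2$, while also justifying that $(2,3)$ is a bridge in $\psi_1(H)$ and that $\kappa(\phi_3(H))=\kappa(H)+1$, which the paper leaves implicit.
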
  
\begin{proof}
By Lemma~\ref{lem:psi1} we have $w_1=u$ and
$\psi_1(H)=((1,2)\sigma,(1,2)\alpha)$.  Here $\psi_1(H)$ contains the
edge $(1,2)(1,2,3)=(2,3)$ which is a bridge. Hence we obtain
$$
R(\psi_1(H))\cdot w_1=(1+u)u\cdot
R((2,3)(1,2)\sigma,(2,3)(1,2)\alpha)).  
$$
By Lemma~\ref{lem:psi2} we have $w_2=1$ and
$\psi_2(H)=((1,2)\sigma,(1,2)\alpha)$. 
Similarly to the previous computation we obtain 
$$
R(\psi_2(H))\cdot w_2=(1+u)\cdot
R((2,3)(1,2)\sigma,(2,3)(1,2)\alpha)).  
$$
By Lemma~\ref{lem:psi3} we have
$\phi_3(H)=((1,3)\sigma,(1,3)\alpha(1,2))$,
$w_3=u$ and  
$$R(\psi_3(H))\cdot w_3
=u\cdot R((1,2)(1,3)\sigma,(1,2)(1,3)\alpha)).$$
The sum of the three display formulas in the proof yields the stated
equality. 
\end{proof}

\begin{proposition}
\label{prop:sloop}  
If $(1,2,3)$ is a simple loop where $1$ and $2$
belong to the same cycle of $\sigma$ then 
$$
R(\sigma,\alpha)=(1+v)\cdot
\left(R((2,3)\sigma,(2,3)\alpha(1,3))+R(\sigma,\alpha(1,3,2))\right)
+R((1,3)\sigma,(1,3)\alpha(1,2)).
$$
\end{proposition}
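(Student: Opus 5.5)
We follow the same pattern as the proofs of Propositions~\ref{prop:dloop} and~\ref{prop:dbridge}: expand $R(H)$ by Theorem~\ref{thm:wrgrecc} into the three terms $R(\psi_k(H))\cdot w_k$ for $k=1,2,3$. Lemmas~\ref{lem:psi1}, \ref{lem:psi2} and~\ref{lem:psi3} identify each term, and Corollary~\ref{cor:wrg2rec} then removes the resulting $2$-cycle where one appears.

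\emph{The terms $k=1,2$.} A simple loop is not a double bridge. So Lemma~\ref{lem:psi1} gives $w_1=1$ and $\psi_1(H)=(\sigma,\alpha(1,3))$, and Lemma~\ref{lem:psi2} gives $w_2=v$ and $\psi_2(H)=(\sigma,\alpha(1,3))$. Hence these two terms combine to $(1+v)R(\sigma,\alpha(1,3))$.

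In $(\sigma,\alpha(1,3))$ the hyperedge $(1,2,3)$ has become the $2$-cycle $(2,3)$. Since $1,2$ lie on one vertex and $3$ lies on another (simple loop), $(2,3)$ is not a loop. By the second standing assumption, deleting $(1,3)$ did not disconnect, and $(\sigma,\alpha(1,3))$ is still a hypermap. We must check that $(2,3)$ is not a bridge there, i.e.\ that $\kappa(\sigma,\alpha(1,3,2))=\kappa(H)$. This holds because a simple loop is ordinary: replacing $(1,2,3)$ by $(1)(2)(3)$ does not increase the number of components. So $(2,3)$ is an ordinary edge, and the third case of Corollary~\ref{cor:wrg2rec} gives
$$R(\sigma,\alpha(1,3))=R((2,3)\sigma,(2,3)\alpha(1,3))+R(\sigma,\alpha(1,3)(2,3)).$$
Since $(1,3)(2,3)=(1,3,2)$, the second summand is $R(\sigma,\alpha(1,3,2))$. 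This produces the factor $(1+v)(\cdots)$ in the statement.

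\emph{The term $k=3$.} Lemma~\ref{lem:psi3} gives $\phi_3(H)=((1,3)\sigma,(1,3)\alpha(1,2))$ and $w_3=u^{\kappa(\phi_3(H))-\kappa(H)}$. We must show $\kappa(\phi_3(H))=\kappa(H)$, so that $w_3=1$ and $\psi_3(H)=\phi_3(H)$; this yields the last summand. We expect this to be the only real point to check. We will argue that $(1,3)\alpha(1,2)$ replaces $(1,2,3)$ by $(1)(2)(3)$, while $(1,3)\sigma$ merges the vertices of $1$ and $3$. The vertex of $1$ already contains $2$, so in $\phi_3(H)$ the points $1,2,3$ lie on a common vertex. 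Hence every connection provided by the hyperedge $(1,2,3)$ is still realized by the vertex, and the orbits of $\langle (1,3)\sigma,\alpha_3\rangle$ coincide with those of $\langle\sigma,\alpha\rangle$. Summing the three contributions gives the stated formula.
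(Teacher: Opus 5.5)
Your proof is correct and follows the paper's own proof essentially step for step: expand via Theorem~\ref{thm:wrgrecc}, read off $w_k$ and $\psi_k(H)$ from Lemmas~\ref{lem:psi1}--\ref{lem:psi3}, and apply the ordinary-edge case of Corollary~\ref{cor:wrg2rec} to the $2$-cycle $(2,3)$ of $(\sigma,\alpha(1,3))$. The paper only asserts two points that you justify soundly: that $(2,3)$ is neither a loop nor a bridge there, and that $\kappa(\phi_3(H))=\kappa(H)$, so $w_3=1$.
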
  
\begin{proof}
By Lemma~\ref{lem:psi1} we have $w_1=1$ and $\psi_1(H)=(\sigma,\alpha(1,3))$.  
Here $\psi_1(H)$ contains the
edge $(1,2,3)(1,3)=(2,3)$ which is an ordinary edge. Hence we obtain
$$
R(\psi_1(H))\cdot
w_1=R(\sigma,\alpha(1,3)(2,3))+R((2,3)\sigma,(2,3)\alpha(1,3)).
$$
By Lemma~\ref{lem:psi2} we have $w_2=v$ and
$\psi_2(H)=(\sigma,\alpha(1,3))$. 
Similarly to the previous computation we obtain 
$$
R(\psi_2(H))\cdot
w_2=v\cdot \left(R(\sigma,\alpha(1,3)(2,3))+R((2,3)\sigma,(2,3)\alpha(1,3)))\right).
$$ 
By Lemma~\ref{lem:psi3} we have
$\phi_3(H)=((1,3)\sigma,(1,3)\alpha(1,2))$,
$w_3=1$ and  
$$R(\psi_3(H))\cdot w_3=R((1,3)\sigma,(1,3)\alpha(1,2)).$$
The sum of the three display formulas in the proof yields the stated
equality. 
\end{proof}
The dual of Proposition~\ref{prop:sloop} is the following. 
\begin{proposition}
\label{prop:sbridge}  
If $(1,2,3)$ is a simple bridge where
$\kappa(\sigma,\alpha(2,3))=\kappa(\sigma,\alpha)+1$ then 
\begin{align*}
  R(\sigma,\alpha)&=(1+u)\cdot
  \left(R((2,3)\sigma,(2,3)\alpha(1,3))
  +R((1,3,2)\sigma,(1,3,2)\alpha)
  \right)\\
  &+R((1,3)\sigma,(1,3)\alpha(1,2)).
\end{align*}  
\end{proposition}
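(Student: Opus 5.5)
The plan is to follow the same three-term computation as in the proofs of Propositions~\ref{prop:dloop}, \ref{prop:dbridge} and \ref{prop:sloop}. We apply Theorem~\ref{thm:wrgrecc} to the cycle $(1,2,3)$, read off $w_k$ and $\psi_k(H)$ for $k=1,2,3$ from Lemmas~\ref{lem:psi1}, \ref{lem:psi2} and \ref{lem:psi3}, reduce the remaining $2$-cycle in each $\psi_k(H)$ with Corollary~\ref{cor:wrg2rec}, and add the three contributions.

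The first step is a preliminary connectivity analysis. Since $(1,2,3)$ is a single bridge, replacing it by $(1)(2)(3)$ increases $\kappa$ by exactly one. Hence the points $1,2,3$ lie in exactly two connected components of $(\sigma,\alpha')$. A direct computation gives $\alpha(2,3)=\alpha$ with $(1,2,3)$ replaced by $(1,2)(3)$. The hypothesis $\kappa(\sigma,\alpha(2,3))=\kappa(\sigma,\alpha)+1$ therefore says that the component of $3$ in $(\sigma,\alpha')$ contains neither $1$ nor $2$. It follows that $1$ and $2$ lie in the same component of $(\sigma,\alpha')$, while $3$ is alone in the other. This is consistent with assumption~(2). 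Since a simple bridge is regular, $1,2,3$ lie on three distinct cycles of $\sigma$.

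Next we evaluate the three terms.
\begin{itemize}
\item $k=1$. By Lemma~\ref{lem:psi1}, $w_1=1$ and $\psi_1(H)=(\sigma,\alpha(1,3))$, which contains the edge $(2,3)$. Deleting it separates $3$'s component from the rest, so $(2,3)$ is a bridge. Corollary~\ref{cor:wrg2rec} then gives $(1+u)R((2,3)\sigma,(2,3)\alpha(1,3))$.
\item $k=2$. By Lemma~\ref{lem:psi2}, $w_2=1$ and $\psi_2(H)=((1,2)\sigma,(1,2)\alpha)$. Here $(1,2)$ times $(1,2,3)$ equals $(1)(2,3)$. Contracting $1$ with $2$ does not connect $3$'s component to anything else, so $(2,3)$ is again a bridge. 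Since $(2,3)(1,2)=(1,3,2)$, we obtain $(1+u)R((1,3,2)\sigma,(1,3,2)\alpha)$.
\item $k=3$. By Lemma~\ref{lem:psi3}, $\phi_3(H)=((1,3)\sigma,(1,3)\alpha(1,2))$. Merging the vertices of $1$ and $3$ reconnects the two components found above. Hence $\kappa(\phi_3(H))=\kappa(H)$, so $w_3=1$ and $\psi_3(H)=\phi_3(H)$, which gives the last term $R((1,3)\sigma,(1,3)\alpha(1,2))$.
\end{itemize}
Summing the three contributions yields the stated identity.

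The main obstacle is the connectivity bookkeeping in the first step: showing that the single extra component produced by removing the hyperedge is exactly the one containing $3$. Once this is established, the bridge status of $(2,3)$ in $\psi_1(H)$ and $\psi_2(H)$, and the equality $\kappa(\phi_3(H))=\kappa(H)$, are immediate. The only other care needed is the routine permutation products $(1,2)(1,2,3)$ and $(2,3)(1,2)$, which I would verify pointwise as above.
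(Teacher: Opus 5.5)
Your proposal is correct and follows the paper's proof: apply Theorem~\ref{thm:wrgrecc} through Lemmas~\ref{lem:psi1}--\ref{lem:psi3}, reduce the bridge $(2,3)$ in $\psi_1(H)$ and $\psi_2(H)$ with Corollary~\ref{cor:wrg2rec}, take $w_3=1$, and add the three terms. Your preliminary observation that $3$ is the only point of the hyperedge in its component of $(\sigma,\alpha')$ supplies the justification the paper leaves implicit, both for the bridge claims and for $\kappa(\phi_3(H))=\kappa(H)$.
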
  
\begin{proof}
By Lemma~\ref{lem:psi1} we have $w_1=1$ and $\psi_1(H)=(\sigma,\alpha(1,3))$.  
Here $\psi_1(H)$ contains the
edge $(1,2,3)(1,3)=(2,3)$ which is a bridge. Hence we obtain
$$
R(\psi_1(H))\cdot w_1=(1+u)\cdot
R((2,3)\sigma,(2,3)\alpha(1,3))).  
$$
By Lemma~\ref{lem:psi2} we have $w_2=1$ and
$\psi_2(H)=((1,2)\sigma,(1,2)\alpha)$. Here $\psi_2(H)$ contains the
edge $(1,2)(1,2,3)=(2,3)$ which is a bridge. Hence we obtain
$$
R(\psi_2(H))\cdot w_1=(1+u)\cdot
R((2,3)(1,2)\sigma,(2,3)(1,2)\alpha)).  
$$
By Lemma~\ref{lem:psi3} we have
$\phi_3(H)=((1,3)\sigma,(1,3)\alpha(1,2))$,
$w_3=1$ and  
$$
R(\psi_3(H))\cdot w_3=R((1,3)\sigma,(1,3)\alpha(1,2)).
$$
The sum of the three display formulas in the proof yields the stated
equality. 
\end{proof}  
\begin{example}
\label{ex:sbridge}  
For $\sigma=(1,4)(2,5)(3)$ and $\alpha=(1,2,3)(4,5)$ the hypermap
$(\sigma,\alpha)$ is an example of a simple
bridge considered in Proposition~\ref{prop:sbridge} above. This hypermap
satisfies
\begin{align*}
R((2,3)\sigma,(2,3)\alpha(1,3))&=R((1,4)(2,5,3),(4,5))=u+1\\
R((1,3,2)\sigma,(1,3,2)\alpha)
&=R((1,4,3,2,5),(4,5))=v+1\quad\mbox{and}\\
R((1,3)\sigma,(1,3)\alpha(1,2))
&=R((1,4,3)(2,5),(4,5))=u+1.
\end{align*}
Proposition~\ref{prop:sbridge} gives
$$
R(\sigma,\alpha)=(1+u)\cdot (u+1+v+1)+u+1=u^2+uv+4u+v+3.
$$
The same formula, using Theorem~\ref{thm:wrgrec} directly, was obtained
in~\cite[Example~2.13]{Cori-Hetyei2}.
\end{example}  

\begin{proposition}
If $(1,2,3)$ is a bridge-loop where $1$ and $2$
belong to the same cycle of $\sigma$ then 
$$
R(\sigma,\alpha)=(1+u)\cdot(1+v)
\cdot R((2,3)\sigma,(2,3)\alpha(1,3))
+R((1,3)\sigma,(1,3)\alpha(1,2)).
$$
\end{proposition}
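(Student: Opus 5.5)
The plan is to follow the same scheme as in the proofs of Propositions~\ref{prop:dloop}--\ref{prop:sbridge}. We apply Theorem~\ref{thm:wrgrecc} to the $3$-cycle $(1,2,3)$, so that $R(H)=\sum_{k=1}^3 R(\psi_k(H))\,w_k$, and read off each $w_k$ and $\psi_k(H)$ from Lemmas~\ref{lem:psi1}, \ref{lem:psi2} and \ref{lem:psi3}. Here $(1,2,3)$ is a bridge-loop, which gives three facts:
\begin{itemize}
\item it is not a double bridge;
\item it is a single loop, with $1$ and $2$ on the same cycle of $\sigma$ and $3$ on a different one;
\item $\kappa(\sigma,\alpha(1,3,2))=\kappa(\sigma,\alpha)+1$.
\end{itemize}

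First, by Lemma~\ref{lem:psi1} we get $w_1=1$ and $\psi_1(H)=(\sigma,\alpha(1,3))$. This collection contains the $2$-cycle $(1,2,3)(1,3)=(2,3)$. I claim $(2,3)$ is a bridge there. By our second standing assumption, $\kappa(\psi_1(H))=\kappa(H)$. Deleting $(2,3)$ from $\psi_1(H)$ yields $(\sigma,\alpha(1,3,2))$, which has $\kappa(H)+1$ components by the bridge-loop hypothesis. (Since $2$ and $3$ lie on different vertices, $(2,3)$ is not a loop, which is consistent.) Corollary~\ref{cor:wrg2rec} then gives
$$R(\psi_1(H))\,w_1=(1+u)\,R((2,3)\sigma,(2,3)\alpha(1,3)).$$

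Second, by Lemma~\ref{lem:psi2}, because $(1,2,3)$ is a single loop with $1,2$ on the same vertex, we get $w_2=v$ and $\psi_2(H)=(\sigma,\alpha(1,3))=\psi_1(H)$. The same computation therefore yields
$$R(\psi_2(H))\,w_2=v(1+u)\,R((2,3)\sigma,(2,3)\alpha(1,3)).$$

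Third, by Lemma~\ref{lem:psi3}, since $(1,2,3)$ is not a double loop, we have $\phi_3(H)=((1,3)\sigma,(1,3)\alpha(1,2))$. The step I expect to need the most care is checking that $\kappa(\phi_3(H))=\kappa(H)$, so that $w_3=1$ and $\psi_3(H)=\phi_3(H)$. The argument is as follows. The vertex permutation $(1,3)\sigma$ merges the cycle containing $3$ with the cycle containing $1$ and $2$. Hence $1,2,3$ all lie on one vertex, and replacing $(1,2,3)$ by $(1)(2)(3)$ loses no connectivity among these points. Every other point's orbit still meets this merged vertex exactly as it met one of the original vertices of $1,2,3$ in $H$. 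So the number of orbits equals $\kappa(H)$. To make this precise I would compare the orbits of $\langle (1,3)\sigma,\alpha(1,3,2)\rangle$ with those of $\langle\sigma,\alpha\rangle$, using that both sets of generators identify the same classes of points. This gives
$$R(\psi_3(H))\,w_3=R((1,3)\sigma,(1,3)\alpha(1,2)).$$

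Summing the three contributions yields $(1+u)(1+v)R((2,3)\sigma,(2,3)\alpha(1,3))+R((1,3)\sigma,(1,3)\alpha(1,2))$, which is the stated formula.
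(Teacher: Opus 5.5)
Your proposal is correct and follows the paper's proof step for step. You expand $R(H)$ into the three contributions from Theorem~\ref{thm:wrgrecc}, read off via Lemmas~\ref{lem:psi1}--\ref{lem:psi3}, with $(2,3)$ a bridge in $\psi_1(H)=\psi_2(H)$ and $w_3=1$. You also check two things the paper only asserts: that $(2,3)$ is a bridge, since $\alpha(1,3)(2,3)=\alpha(1,3,2)$, and that $\kappa(\phi_3(H))=\kappa(H)$, since $(1,3)\sigma$ merges the two vertices carrying $1,2,3$.
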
  
\begin{proof}
By Lemma~\ref{lem:psi1} we have $w_1=1$ and $\psi_1(H)=(\sigma,\alpha(1,3))$.  
Here $\psi_1(H)$ contains the
edge $(1,2,3)(1,3)=(2,3)$ which is an bridge. Hence we obtain
$$
R(\psi_1(H))\cdot
w_1=(1+u)\cdot R((2,3)\sigma,(2,3)\alpha(1,3)).
$$
By Lemma~\ref{lem:psi2} we have $w_2=v$ and
$\psi_2(H)=(\sigma,\alpha(1,3))$. Similarly to the previous computation
we obtain
$$
R(\psi_2(H))\cdot
w_2=(1+u)v\cdot R((2,3)\sigma,(2,3)\alpha(1,3)).
$$
Lemma~\ref{lem:psi3} we have
$\phi_3(H)=((1,3)\sigma,(1,3)\alpha(1,2))$,
$w_3=1$ and  
$$R(\psi_3(H))\cdot w_3=R((1,3)\sigma,(1,3)\alpha(1,2)).$$
The sum of the three display formulas in the proof yields the stated
equality. 
\end{proof}

\begin{proposition}
\label{prop:or}
If $(1,2,3)$ is an ordinary regular hyperedge then 
\begin{align*}
  R(\sigma,\alpha)&=R(\sigma,\alpha(1,3,2))
       +R((2,3)\sigma,(2,3)\alpha(1,3))\\
       &+R((1,2)\sigma,(1,2)\alpha(2,3))+R((1,3,2)\sigma,(1,3,2)\alpha)\\
       &+R((1,3)\sigma,(1,3)\alpha(1,2)).
\end{align*}
\end{proposition}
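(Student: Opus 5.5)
We follow the pattern of the previous propositions: write $R(H)=R(\psi_1(H))w_1+R(\psi_2(H))w_2+R(\psi_3(H))w_3$ using Theorem~\ref{thm:wrgrecc}, identify each $\psi_i(H)$ and $w_i$ via Lemmas~\ref{lem:psi1}, \ref{lem:psi2} and \ref{lem:psi3}, and then apply Corollary~\ref{cor:wrg2rec} to the $2$-cycle $(2,3)$ in $\psi_1(H)$ and $\psi_2(H)$.

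First, since $(1,2,3)$ is regular it is not a double bridge, so by Lemma~\ref{lem:psi1} we have $w_1=1$ and $\psi_1(H)=(\sigma,\alpha(1,3))$. This contains the $2$-cycle $(2,3)$. We claim this edge is ordinary. It is not a loop, because $2$ and $3$ lie on distinct vertices. It is not a bridge: deleting it yields $(\sigma,\alpha(1,3,2))$, and $\kappa(\sigma,\alpha(1,3,2))=\kappa(H)=\kappa(\sigma,\alpha(1,3))$, the first equality because the hyperedge is ordinary and the second by our second standing assumption. The third case of Corollary~\ref{cor:wrg2rec} therefore gives
$$R(\psi_1(H))=R(\sigma,\alpha(1,3,2))+R((2,3)\sigma,(2,3)\alpha(1,3)).$$

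Next, since the hyperedge is not a loop, Lemma~\ref{lem:psi2} gives $w_2=1$ and $\psi_2(H)=((1,2)\sigma,(1,2)\alpha)$. Here $(1,2)\alpha$ contains the $2$-cycle $(2,3)$. We claim it is again ordinary. It is not a loop, because in $(1,2)\sigma$ the vertex containing $1$ and $2$ is separate from the vertex of $3$. It is not a bridge, because its deletion gives $((1,2)\sigma,(1,2)\alpha(2,3))=((1,2)\sigma,\alpha(1,3,2))$. This collection has the same connected components as $(\sigma,\alpha(1,3,2))$ with the components through $1$ and $2$ merged. Those components already coincide, since $\kappa(\sigma,\alpha(1,3,2))=\kappa(H)$ and $\kappa(\sigma,\alpha(1,3))=\kappa(H)$. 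Hence
$$R(\psi_2(H))=R((1,2)\sigma,(1,2)\alpha(2,3))+R((2,3)(1,2)\sigma,(2,3)(1,2)\alpha).$$
One checks that $(2,3)(1,2)=(1,3,2)$ as permutations, so the last term is $R((1,3,2)\sigma,(1,3,2)\alpha)$.

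Finally, the hyperedge is not a double loop, so Lemma~\ref{lem:psi3} applies with $\kappa(\phi_3(H))=\kappa(\sigma,\alpha(1,3,2))=\kappa(H)$. Thus $w_3=1$ and $\psi_3(H)=((1,3)\sigma,(1,3)\alpha(1,2))$. Summing the three contributions yields the five terms of the statement.

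The main obstacle is the bookkeeping of connected components, that is, justifying that $(2,3)$ is not a bridge in $\psi_1(H)$ and in $\psi_2(H)$. This rests on combining the ordinary hypothesis $\kappa(\sigma,\alpha(1,3,2))=\kappa(H)$ with the standing assumption on deleting $(1,3)$. A secondary point is the permutation identity $(2,3)(1,2)=(1,3,2)$, which depends on the paper's composition convention and should be checked against its usage in Proposition~\ref{prop:dbridge}.
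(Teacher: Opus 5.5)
Your proposal is correct and follows the paper's proof: expand via Theorem~\ref{thm:wrgrecc} with Lemmas~\ref{lem:psi1}--\ref{lem:psi3}, then apply the ordinary-edge case of Corollary~\ref{cor:wrg2rec} to the $2$-cycle $(2,3)$ in $\psi_1(H)$ and in $\psi_2(H)$. Your justification that $(2,3)$ is neither a loop nor a bridge in these collections (which the paper only asserts) is sound, as is the identity $(2,3)(1,2)=(1,3,2)$ under the paper's right-to-left convention; in the $\psi_2(H)$ step the ordinary hypothesis alone already puts $1$ and $2$ in the same component of $(\sigma,\alpha(1,3,2))$, so the standing assumption is not needed there.
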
  
\begin{proof}
By Lemma~\ref{lem:psi1} we have $w_1=1$ and $\psi_1(H)=(\sigma,\alpha(1,3))$.  
Here $\psi_1(H)$ contains the edge $(1,2,3)(1,3)=(2,3)$ which is an
ordinary regular edge. Hence we obtain
$$
R(\psi_1(H))\cdot w_1=
R(\sigma,\alpha(1,3)(2,3))+
R((2,3)\sigma,(2,3)\alpha(1,3)).  
$$
By Lemma~\ref{lem:psi2} we have $w_2=1$ and
$\psi_2(H)=((1,2)\sigma,(1,2)\alpha)$. Here $\psi_2(H)$ contains the
edge $(1,2)(1,2,3)=(2,3)$ which is an ordinary regular edge.  
Hence we obtain
$$
R(\psi_2(H))\cdot w_2=
R((1,2)\sigma,(1,2)\alpha(2,3))+
R((2,3)(1,2)\sigma,(2,3)(1,2)\alpha).
$$
By Lemma~\ref{lem:psi3} we have
$\phi_3(H)=((1,3)\sigma,(1,3)\alpha(1,2))$,
$w_3=1$ and  
$$
R(\psi_3(H))\cdot w_3=R((1,3)\sigma,(1,3)\alpha(1,2)).
$$
The sum of the three display formulas in the proof yields the stated
equality.

\end{proof}  
An important consequence of the above statements is the following.
\begin{corollary}
\label{cor:almostTutte}  
Let $(\sigma,\alpha)$ be a collection of hypermaps with short hyperedges. 
If we are able to use the recurrences stated above together with
Corollary~\ref{cor:wrg2rec} to compute  $R(\sigma,\alpha)$ in such a
way that we never encounter a double loop or a double bridge in the
process, then $R(\sigma,\alpha)$ is a nonnegative integer linear
combination of products of powers of $(1+u)$ and $(1+v)$.
\end{corollary}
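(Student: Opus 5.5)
We argue by induction on the total length of the recursive computation, i.e., on the structure of the recursion tree obtained by repeatedly applying Corollary~\ref{cor:wrg2rec} and the recurrences of Theorem~\ref{thm:reductions} (Propositions~\ref{prop:sloop}, \ref{prop:sbridge}, \ref{prop:or} and the bridge-loop proposition). More concretely, we induct on the number of points lying in cycles of $\alpha$ of length at least two. Let $\mathcal{P}$ denote the set of nonnegative integer linear combinations of monomials $(1+u)^a(1+v)^b$ with $a,b\geq 0$. Then $\mathcal{P}$ is closed under addition and under multiplication by $(1+u)$ and by $(1+v)$; this closure is the only algebraic fact the argument requires.

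For the base case, consider a collection of hypermaps in which every cycle of $\alpha$ is a fixed point. Its only refinement is $\beta=\alpha$ itself. Since then $z(\beta)=n$, the exponent of $v$ is $0$, and the exponent of $u$ is $0$ as well. Hence $R=1$, which is $(1+u)^0(1+v)^0\in\mathcal{P}$.

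For the inductive step, we look at the first reduction applied in the hypothesized computation and split into cases.
\begin{itemize}
\item If it is an application of Corollary~\ref{cor:wrg2rec} to a $2$-cycle, then $R$ equals $(1+u)$ times one $R$-value, or $(1+v)$ times one $R$-value, or a sum of two $R$-values.
\item If it is an application of one of the four recurrences for a simple loop, simple bridge, bridge-loop, or ordinary regular hyperedge, then $R$ is again expressed as a sum of terms, each being $1$, $(1+u)$, $(1+v)$, or $(1+u)(1+v)$ times the $R$-value of a collection in which the $3$-cycle has been replaced by fixed points.
\end{itemize}
In every case the collections on the right-hand side have strictly fewer points in nontrivial cycles of $\alpha$. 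Their computations are the corresponding subtrees of the given computation, so they still avoid double loops and double bridges. By the induction hypothesis each of these $R$-values lies in $\mathcal{P}$, and by the closure property above so does $R(\sigma,\alpha)$.

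The hypothesis excluding double loops and double bridges is used precisely here. Those are the only two reductions, Propositions~\ref{prop:dloop} and \ref{prop:dbridge}, whose coefficients $1+3v+v^2=(1+v)^2+v$ and $1+3u+u^2$ do not lie in $\mathcal{P}$. Indeed, $1+3v+v^2$ is not a nonnegative combination of powers of $1+v$, since writing it in the variable $1+v$ produces a negative coefficient.

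The one point needing care is that all coefficients appearing in Corollary~\ref{cor:wrg2rec} and in the non-double cases of Theorem~\ref{thm:reductions} are indeed products of $(1+u)$ and $(1+v)$ with coefficient one; this is a direct inspection of the stated formulas. One must also confirm that the recurrences remain valid for collections of hypermaps and not only for hypermaps, since intermediate stages may have several components. This holds because Theorems~\ref{thm:wrgrec} and \ref{thm:wrgrecc}, from which Theorem~\ref{thm:reductions} was derived, are stated for collections. Apart from this verification the argument is routine.
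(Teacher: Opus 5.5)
Your proof is correct and is essentially the argument the paper leaves implicit when it calls this corollary a consequence of the recurrences. You unroll Corollary~\ref{cor:wrg2rec} and Theorem~\ref{thm:reductions} by induction; your measure, the number of points in nontrivial cycles of $\alpha$, works because every right-hand side replaces a $2$- or $3$-cycle by fixed points. Outside the double loop and double bridge cases every coefficient is $1$, $1+u$, $1+v$ or $(1+u)(1+v)$, and the recursion ends at $R=1$ when $\alpha$ is the identity.
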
  
In other words, if we are able to avoid double bridges and double loops
in our computation of the Whitney polynomial, then after substituting
$u=x-1$ and $y=v-1$ we obtain a 
polynomial of $x$ and $y$ that ``looks like a Tutte polynomial'': it has
nonnegative integer coefficients.

\section{Substitutions into the Whitney polynomial}
\label{sec:subs}

A few evaluations of the Whitney polynomial of a collection of hypermaps
are listed in~\cite{Cori-Hetyei2}. In this section
we add a few results that are specific to collections of hypermaps with
short hyperedges. 

\begin{remark}
It is worth noting that Proposition~6.10 and
Theorem~7.7 in~\cite{Cori-Hetyei2} are specifically about collections of
hypermaps with short edges, showing that the numbers of proper colorings
and nowhere zero flows (as defined in~\cite{Cori-Machi-flow}) are
polynomial functions albeit they can not be obtained by direct
substitutions into our Whitney polynomial. 
\end{remark}

It is well-known that substituting $x=0$ and $y=0$ into the Tutte
polynomial of a graph with at least one edge yields zero. Equivalently,
the Whitney polynomial $R(\sigma,\alpha;u,v)$ of a map with at least one
edge that is not a bud satisfies $R(\sigma,\alpha;-1,-1)=0$. This
observation may be extended to arbitrary hypermaps as follows.

\begin{proposition}
\label{prop:edgezero}  
Let $(\sigma,\alpha)$ be any hypermap on the set $\{1,2,\ldots,n\}$. If
$\alpha$ contains at least one cycle of length $2$ then we have
$R(\sigma,\alpha;-1,-1)=0$. 
\end{proposition}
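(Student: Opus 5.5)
Let $(1,2)$ be a $2$-cycle of $\alpha$ (after relabeling). The plan is to apply Corollary~\ref{cor:wrg2rec} to this cycle and evaluate at $u=v=-1$. Corollary~\ref{cor:wrg2rec} is stated for collections of hypermaps with no restriction on the other cycles of $\alpha$, so it holds for arbitrary hypermaps, not only those with short hyperedges. We distinguish the three cases of that corollary.

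\emph{Bridge or loop.} If $(1,2)$ is a bridge, the corollary gives $R(\sigma,\alpha;u,v)=(1+u)\,R((1,2)\sigma,(1,2)\alpha;u,v)$. The factor $1+u$ vanishes at $u=-1$, and the remaining factor is a polynomial, so $R(\sigma,\alpha;-1,-1)=0$. If $(1,2)$ is a loop, the factor $(1+v)$ vanishes at $v=-1$ in the same way.

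\emph{Ordinary edge.} Here we have
$$
R(\sigma,\alpha)=R((1,2)\sigma,(1,2)\alpha)+R(\sigma,\alpha(1,2)),
$$
and we must show that the two terms cancel at $u=v=-1$. A direct induction on the recurrence does not close, because after contraction or deletion there may be no $2$-cycle left. Instead we go back to the defining sum
$$
R(\sigma,\alpha;u,v)=\sum_{\beta\le\alpha}u^{\kappa(\sigma,\beta)-\kappa(\sigma,\alpha)}v^{\kappa(\sigma,\beta)+n-z(\beta)-z(\sigma)}
$$
and use a sign-reversing involution. The refinements of a cycle $(1,2)$ are exactly $(1,2)$ itself and $(1)(2)$, and noncrossing refinements of distinct cycles are chosen independently. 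Hence the refinements $\beta$ of $\alpha$ pair up as $\beta \leftrightarrow \beta(1,2)$, toggling the cycle $(1,2)$.

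At $u=v=-1$, the term for $\beta$ is $(-1)^{N(\beta)}$ with
$$
N(\beta)=2\kappa(\sigma,\beta)-\kappa(\sigma,\alpha)+n-z(\beta)-z(\sigma).
$$
Passing from $\beta$ to $\beta(1,2)$ changes $z(\beta)$ by exactly $\pm1$. It changes $\kappa(\sigma,\beta)$ by $0$ or $\pm1$, but that change enters $N$ with the factor $2$, so it does not affect the parity. Thus $N(\beta)$ and $N(\beta(1,2))$ have opposite parity, the paired terms cancel, and the whole sum vanishes.

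This parity argument needs no case distinction at all, so it proves the statement directly and makes the case analysis above unnecessary. The only step to verify carefully is that $\beta\mapsto\beta(1,2)$ is a well-defined involution on the set of refinements of $\alpha$. This follows from the cycle-by-cycle description of refinements recalled in the Preliminaries, since the only noncrossing partitions of a $2$-cycle are the two described above.
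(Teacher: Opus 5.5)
Your argument is correct, and it takes a genuinely different route from the paper's.

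\textbf{The paper's route.} The paper inducts on $n-z(\alpha)$. When $\alpha$ has at least two nontrivial cycles, it applies Theorem~\ref{thm:wrgrecc} to a \emph{longest} cycle. Every $\psi_k(H)$ is then a hypermap with more hyperedge cycles that still contains a $2$-cycle. The base case of a single $2$-cycle is a one-edge map, where $R$ is $1+u$ or $1+v$ by Corollary~\ref{cor:wrg2rec}. The obstacle you point out, that deleting or contracting the $2$-cycle may leave none, is avoided by eliminating the other cycles first.

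\textbf{Your route.} You bypass recurrences altogether. Since $\kappa(\sigma,\beta)$ enters the total exponent with coefficient $2$, your computation really shows
\[
R(\sigma,\alpha;-1,-1)=(-1)^{n-\kappa(\sigma,\alpha)-z(\sigma)}\sum_{\beta\le\alpha}(-1)^{z(\beta)}=(-1)^{n-\kappa(\sigma,\alpha)-z(\sigma)}\prod_{c}N_{|c|}(-1),
\]
where the product runs over the cycles $c$ of $\alpha$. Your toggling involution is just the fact $N_2(-1)=0$.

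\textbf{What each approach buys.} Yours is shorter and holds verbatim for collections of hypermaps. It is also more general: the symmetry $N(m,k)=N(m,m+1-k)$ gives $N_m(-1)=0$ for all even $m$, so the value vanishes as soon as $\alpha$ has any cycle of even length. The paper's proof, in exchange, stays uniform with the recursive deletion--contraction machinery used throughout.

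\textbf{A consequence for Section~\ref{sec:subs}.} The same identity evaluates the $3$-uniform case directly. Using $N_1(-1)=-1$ and $N_3(-1)=1$, a $3$-uniform hypermap gives $(-1)^{z_3(\alpha)+1+z(\sigma)}$. By the identity $2(z_3(\alpha)+1-g(\sigma,\alpha))=z(\sigma)+z(\alpha^{-1}\sigma)$, this equals $(-1)^{g(\sigma,\alpha)+(z(\sigma)-z(\alpha^{-1}\sigma))/2}$. So it agrees with the formula stated in Section~\ref{sec:subs} only in even genus. For example, $\sigma=(1,3,2)$, $\alpha=(1,2,3)$ has genus $1$ and one face, and $R=1+3v+v^2$ takes the value $-1$ there.

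Your opening case analysis via Corollary~\ref{cor:wrg2rec} is superfluous and can be deleted, as you note.
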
  
\begin{proof}
We proceed by induction on $n-z(\alpha)$. Since $\alpha$ has at least
one $2$-cycle, we have $z(\alpha)\leq n-1$ and $n-z(\alpha)\geq 1$. 

If $n-z(\alpha)=1$ then $\alpha$ has exactly one two-cycle $(i,j)$, and
all elements of $\{1,2,\ldots,n\}-\{i,j\}$ are fixed points of $\alpha$.  
The hypermap $(\sigma,\alpha)$ is a map and has at most two
vertices. The edge $(i,j)$ is a bridge if $z(\sigma)=2$ and it is a loop
if $z(\sigma)=1$. By Corollary~\ref{cor:wrg2rec}, the Whitney polynomial
$R(\sigma,\alpha;u,v)$ is either $u+1$ or $v+1$. Substituting $u=-1$ and
$v=-1$ yields $R(\sigma,\alpha;-1,-1)=0$ in either case.

If $\alpha$ has at least two cycles of length at least $2$, let us
select one of its longest cycles and apply Theorem~\ref{thm:wrgrecc} to
it. We obtain a formula of the form
$$
R(\sigma,\alpha;u,v)=\sum_{k=1}^m R(\sigma_k,\alpha_k;u,v)\cdot w_k,
$$
where each hypermap $(\sigma_k,\alpha_k)$ satisfies
$z(\alpha_k)>z(\alpha)$ and hence
$n-z(\alpha_k)<n-z(\alpha)$. Furthermore each $\alpha_k$ contains at
least one cycle of length $2$. Substituting $u=-1$ and $v=-1$ yields
zero on the right hand side by our induction hypothesis.   
\end{proof}
Proposition~\ref{prop:edgezero} suggests having a closer look at {\em
  $3$-uniform hypermaps}.
\begin{definition}
We say that a hypermap $(\sigma,\alpha)$ with short hyperedges is {\em
  $3$-uniform} if $\alpha$ has no cycle of length $2$. Equivalently,
each cycle of $\alpha$ is either a bud, or has length $3$.  
\end{definition}

\begin{remark}
It is worth noting that the Whitney polynomial of a $3$-uniform
  hypermap may be computed using only the results stated in
  Section~\ref{sec:rec}: each statement expresses the Whitney polynomial
  of a $3$-uniform hypermap in terms of the Whitney polynomials of other
  $3$-uniform hypermaps.  
\end{remark}

While evaluating $R(\sigma,\alpha;-1,-1)$ for $3$-uniform hypermaps, the
following observation plays a key role.

\begin{lemma}
For a $3$-uniform hypermap $(\sigma,\alpha)$ the number $z(\sigma)$ of
vertices and the number $z(\alpha^{-1}\sigma)$ faces have the same parity.
\end{lemma}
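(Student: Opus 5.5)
The natural tool is the sign homomorphism on permutations of $\{1,2,\ldots,n\}$. A permutation $\pi$ has sign $(-1)^{n-z(\pi)}$, and the sign is multiplicative. For the face permutation we have $\alpha^{-1}\sigma$, so
$$
(-1)^{n-z(\alpha^{-1}\sigma)}=(-1)^{n-z(\alpha^{-1})}\cdot(-1)^{n-z(\sigma)}
=(-1)^{n-z(\alpha)}\cdot(-1)^{n-z(\sigma)},
$$
using that $\alpha^{-1}$ has the same cycle type as $\alpha$. Hence
$$
z(\alpha^{-1}\sigma)\equiv n+z(\alpha)+z(\sigma)\pmod 2 .
$$
The statement therefore reduces to showing that $n-z(\alpha)$ is even.

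Next I use $3$-uniformity. Each cycle of $\alpha$ is either a fixed point (a bud), contributing $0$ to $n-z(\alpha)$, or a $3$-cycle, contributing $3-1=2$. So $n-z(\alpha)=2t$, where $t$ is the number of hyperedges of length $3$. This is even, and therefore $z(\sigma)\equiv z(\alpha^{-1}\sigma)\pmod 2$.

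Alternatively, one can argue from Jacques' formula (\ref{eq:genusdef}): $z(\sigma)+z(\alpha^{-1}\sigma)=n+2-2g-z(\alpha)\equiv n-z(\alpha)\pmod 2$, which is even by the same count. This version uses connectedness only through the formula itself, whereas the sign argument works verbatim for collections of hypermaps too.

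There is no real obstacle here. The only point to check is that the sign of a cycle of length $\ell$ is $(-1)^{\ell-1}$, so that $3$-cycles and fixed points are both even permutations. This makes $\alpha$ an even permutation, and $\sigma$ and $\alpha^{-1}\sigma$ then have the same sign, which gives the same parity of cycle counts on the same ground set.
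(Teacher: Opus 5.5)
Your proof is correct, and your main argument differs from the paper's. The paper's proof is essentially your ``alternative'': it substitutes $n=z_1(\alpha)+3z_3(\alpha)$ and $z(\alpha)=z_1(\alpha)+z_3(\alpha)$ into Jacques' formula~(\ref{eq:genusdef}). This gives the exact identity $z(\sigma)+z(\alpha^{-1}\sigma)=2\left(z_3(\alpha)+1-g(\sigma,\alpha)\right)$.

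Your sign-homomorphism argument uses only that $\alpha$ is an even permutation, since fixed points and $3$-cycles are even. Hence $\sigma$ and $\alpha^{-1}\sigma$ have the same sign, so their cycle counts on the same $n$ points have the same parity. This is more elementary and more general. It needs neither connectedness nor the genus formula, and it applies verbatim to collections of hypermaps. In fact, the integrality of $g$ in Jacques' formula is precisely the parity statement $n+z(\sigma)+z(\alpha)+z(\alpha^{-1}\sigma)\equiv 0 \pmod 2$, which your sign computation proves directly, so the paper's route implicitly relies on the same fact.

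What the paper's route buys in exchange is the exact value of $z(\sigma)+z(\alpha^{-1}\sigma)$ in terms of the number of $3$-cycles and the genus, not just its parity. Only the parity is used in the subsequent theorem, to make $\frac{z(\sigma)-z(\alpha^{-1}\sigma)}{2}$ an integer.
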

\begin{proof}
Introducing $z_{i}(\alpha)$ for the number of $i$-cycles of $\alpha$,
the number $n$ of points is given by $n=z_1(\alpha)+3\cdot
z_3(\alpha)$. Substituting this equality and $z(\alpha)=z_1(\alpha)+z_3(\alpha)$
into equation (\ref{eq:genusdef}) we obtain
$$
z_1(\alpha)+3\cdot
z_3(\alpha) + 2 -2g(\sigma,\alpha) = z(\sigma) + z_1(\alpha)+z_3(\alpha)
+ z(\alpha^{-1}\sigma). 
$$
After simplifying we obtain
$$
2\cdot
\left(z_3(\alpha) + 1 -g(\sigma,\alpha)\right)
= z(\sigma) + z(\alpha^{-1}\sigma).  
$$

\end{proof}  

\begin{theorem}
  A $3$-uniform hypermap $(\sigma,\alpha)$ satisfies
  $$
  R(\sigma,\alpha;-1,-1)=(-1)^{\frac{z(\sigma)-z(\alpha^{-1}\sigma)}{2}}
  $$
and $R(\sigma,\alpha;-2,-1)=R(\sigma,\alpha;-1,-2)=R(\sigma,\alpha;-1,-1)$.   
\end{theorem}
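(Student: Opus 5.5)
We argue by induction on the number $z_3(\alpha)$ of $3$-cycles of $\alpha$, computing $R$ with the recurrences of Theorem~\ref{thm:reductions}. By the Remark above, every hypermap appearing on their right-hand sides is again $3$-uniform. We also check that each is connected: $\psi_k$ preserves $\kappa$, and in the double-bridge case the contraction by $(1,2)(1,3)$ reconnects the components. So the induction stays inside the class covered by the statement, and by the preceding Lemma the exponent $(z(\sigma)-z(\alpha^{-1}\sigma))/2$ is always an integer.

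\emph{Base case.} If $z_3(\alpha)=0$, then $\alpha$ is the identity. Connectivity forces $\sigma$ to be a single cycle, so there is one vertex and $\alpha^{-1}\sigma=\sigma$ gives one face. The only refinement of $\alpha$ is $\alpha$ itself, so $R=1$ for every $u,v$. This matches $(-1)^0$ and all three evaluations agree.

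\emph{Induction step at $(u,v)=(-1,-1)$.} Substituting into the six cases of Theorem~\ref{thm:reductions}:
\begin{itemize}
\item for a double loop and a double bridge the factors $1+3v+v^2$ and $1+3u+u^2$ both become $-1$;
\item for a simple loop, a simple bridge and a bridge-loop every term carrying $(1+u)$ or $(1+v)$ vanishes, leaving only $R((1,3)\sigma,\alpha')$;
\item for an ordinary regular hyperedge all five terms survive.
\end{itemize}
In each case we then apply the induction hypothesis to every surviving term. This reduces the proof to bookkeeping of $z(\sigma')-z(\alpha'^{-1}\sigma')$ for the new pairs $(\sigma',\alpha')$.

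The number of vertices is immediate: it drops by $0$, $1$ or $2$ when $\sigma$ is left-multiplied by the identity, by a transposition, or by $(1,2)(1,3)$. The faces of $\alpha'^{-1}\sigma'$ are obtained from $\alpha^{-1}\sigma$ by left multiplication by an explicit product of the $3$-cycle $(1,2,3)$ with $(1,3)$, $(2,3)$, $(1,2)$ or $(1,2)(1,3)$. I would control the face count with Jacques' formula~(\ref{eq:genusdef}) applied before and after the operation, combined with the cycle-merging and splitting behaviour of left multiplication by a transposition. Alternatively, a sign-of-permutation argument fixes the change in the number of faces modulo $2$.

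The required sign statements are as follows.
\begin{itemize}
\item For the double loop and the double bridge, the quantity $(V-F)/2$ must change by an odd amount, which accounts for the factor $-1$.
\item For the three "single" cases, the term $((1,3)\sigma,\alpha')$ must preserve $(V-F)/2$.
\item For the ordinary regular case, the five signs must sum to the original sign: three terms with one sign and two with the other.
\end{itemize}
I expect this face-counting to be the main obstacle, especially in the double loop case and the ordinary regular case. In those cases a $3$-cycle acting on faces can merge three faces, split one, or leave the count unchanged, and the genus may change. My first attempt would be to find the local configuration directly: determine which of $1,2,3$ lie in the same face of $\alpha^{-1}\sigma$, using that $\alpha(1)=2$ and so on, and that all three points lie in a single vertex of $\sigma$ (double loop) or in three distinct ones (regular). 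I would then check that only the configurations compatible with the desired parity modulo $4$ can occur.

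\emph{Second claim.} The identity $R(\sigma,\alpha;-2,-1)=R(\sigma,\alpha;-1,-1)$ follows by rerunning the same induction at $(u,v)=(-2,-1)$.
\begin{itemize}
\item For a double loop the factor is still $-1$, and for a double bridge it is $1-6+4=-1$.
\item For a simple loop and a bridge-loop the factor $1+v$ still vanishes.
\item The ordinary regular case is unchanged.
\item The only new case is the simple bridge, which gives $-(A+B)+C$, where $A=R((2,3)\sigma,\alpha')$, $B=R((1,2)(1,3)\sigma,\alpha')$ and $C=R((1,3)\sigma,\alpha')$.
\end{itemize}
Thus we need $A=-B$ at $(-1,-1)$ (equivalently at $(-2,-1)$, by induction). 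This should follow from the sign computation above, because these two hypermaps differ by one further contraction that changes $(V-F)/2$ by exactly one unit. The evaluation at $(-1,-2)$ is handled symmetrically. There the simple loop contributes $-(A'+B')+C$ with $A'=R((2,3)\sigma,\alpha')$ and $B'=R(\sigma,\alpha')$, and the needed identity $A'=-B'$ again reduces to the same parity bookkeeping.
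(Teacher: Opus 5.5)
There is a genuine gap, and it sits exactly at the step you postpone as the main obstacle. The face bookkeeping cannot be completed, because the first identity is false whenever $g(\sigma,\alpha)$ is odd.

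Take $\sigma=(1,3,2)$ and $\alpha=(1,2,3)$. Since $\sigma$ alone is transitive, every refinement $\beta$ has $\kappa(\sigma,\beta)=1$. Hence $R(\sigma,\alpha;u,v)=1+3v+v^2$ and $R(\sigma,\alpha;-1,-1)=-1$. But $\alpha^{-1}=\sigma$, so $\alpha^{-1}\sigma=\sigma^2$ is a single $3$-cycle, $z(\sigma)=z(\alpha^{-1}\sigma)=1$, and the statement predicts $+1$. This is a double loop of genus one. Its deletion $((1,3,2),\mathrm{id})$ again has one vertex and one face, so your first sign requirement fails.

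The example also shows that $(z(\sigma)-z(\alpha^{-1}\sigma))/2$ is not a function of the underlying multi-hypergraph, whereas $R$ is: $((1,2,3),(1,2,3))$ has the same $R$ but three faces. You will not find the missing step in the paper either. Its proof rests on Table~\ref{tab:vf}, i.e.\ on the claim that every hyperdeletion lowers the number of faces by exactly one. In fact a hyperdeletion multiplies $\alpha^{-1}\sigma$ by a transposition, so it changes the face count by $\pm1$ and can raise it; here $\delta_{(1,3)}$ takes one face to two.

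Your induction does work if you track $z(\sigma)-z_3(\alpha)$ instead of $(V-F)/2$. By the Lemma preceding the theorem, $\frac{z(\sigma)-z(\alpha^{-1}\sigma)}{2}=z(\sigma)-z_3(\alpha)-1+g(\sigma,\alpha)$. Every hypermap on the right-hand sides of Theorem~\ref{thm:reductions} has one fewer $3$-cycle and $0$, $1$ or $2$ fewer vertices, as you observed. With this invariant your case analysis closes with no face counting and yields $R(\sigma,\alpha;-1,-1)=(-1)^{z(\sigma)+z_3(\alpha)+1}$. This agrees with the stated formula exactly when the genus is even, e.g.\ for planar hypermaps.

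The same value can be read off the definition. At $u=v=-1$ the summand is $(-1)^{n+1+z(\sigma)+z(\beta)}$, and $\sum_\beta(-1)^{z(\beta)}$ factors over the cycles of $\alpha$. Each bud contributes $-1$ and each $3$-cycle contributes $-1+3-1=1$.

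The second claim is true, and your argument for it goes through with the corrected sign. In the simple bridge case, $A=R((2,3)\sigma,\alpha')$ and $B=R((1,3,2)\sigma,\alpha')$ have the same number of $3$-cycles and vertex counts differing by one, so they are opposite. The same holds for $A'$ and $B'$ in the simple loop case. Your stated reason is not correct, though. $(1,3,2)\sigma=(1,3)(2,3)\sigma$ arises from $(2,3)\sigma$ by merging two vertices via a transposition of two fixed points of $\alpha'$. That is not a hypercontraction, and it changes the face count by $\pm1$, so $(V-F)/2$ changes by $0$ or $-1$ depending on the genus.
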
  
\begin{proof}
We provide the details for the computation of $R(\sigma,\alpha;-1,-1)$
only, the computations of $R(\sigma,\alpha;-2,-1)$ and
$R(\sigma,\alpha;-1,-2)$ are completely analogous. We proceed by
induction on the number of $3$-cycles of $\alpha$. If $\alpha$ has no
$3$-cycle then it is the identity permutation and $\sigma$ must be
circular: we must have $z(\sigma)=z(\alpha^{-1}\sigma)=1$ and the stated
formula gives $R(\sigma,\alpha;-1,-1)=1$, which is the correct value.

If $\alpha$ contains at least one $3$-cycle, without loss of generality
we may assume that $(1,2,3)$ is a cycle of $\alpha$, and it satisfies the
conditions stated at the beginning of Section~\ref{sec:rec}. We may
eliminate the cycle $(1,2,3)$ using one of the recurrences stated in
Theorem~\ref{thm:reductions}. After setting $u=-1$ and
$v=-1$ the table becomes even more sparse: we may replace all entries
$1+u$, $1+v$ and the entry $(1+u)(1+v)$ with zero. Furthermore, we may
replace $u^2+3u+1$ and $v^2+3v+1$ with $-1$. The induction step may now
be shown on a case by case basis, using Table~\ref{tab:vf} below. Each
hypercontraction $\gamma_{(i,j)}$ 
decreases the number of vertices by one and leaves the number of faces
unchanged, and each hyperdeletion $\delta_{(i,j)}$ leaves the number of
vertices unchanged and decreases the number of faces by one. Hence the
difference between the number of vertices and the number of faces is the
same as in $H=(\sigma,\alpha)$ for all hypermaps on the right hand side
except for the hypermaps $\gamma_{(1,3,2)}H$, and $\delta_{(1,3,2)}H$. 
By our induction hypothesis, the evaluation of the Whitney polynomial at
$u=v=-1$ is $(-1)^{\frac{z(\sigma)-z(\alpha^{-1}\sigma)}{2}}$ for all
hypermaps appearing on the right hand sides of our recurrences except
for $\gamma_{(1,3,2)}H$, and $\delta_{(1,3,2)}H$ whose Whitney
polynomial evaluated at $u=v=-1$ yields
$(-1)^{1+\frac{z(\sigma)-z(\alpha^{-1}\sigma)}{2}}$. If $(1,2,3)$ is a
double loop or a double bridge, we obtain $R(\sigma,\alpha;-1,-1)$ by
multiplying $(-1)^{1+\frac{z(\sigma)-z(\alpha^{-1}\sigma)}{2}}$ with
$(-1)^2+3\cdot (-1)+1=-1$. If $(1,2,3)$ is a simple loop, a simple
bridge or a double loop, we obtain $R(\sigma,\alpha;-1,-1)$ by
multiplying $(-1)^{\frac{z(\sigma)-z(\alpha^{-1}\sigma)}{2}}$ with
$1$. Finally, if $(1,2,3)$ is an ordinary regular edge then
$R(\sigma,\alpha;-1,-1)$ is a sum of five terms, three of which is
$(-1)^{\frac{z(\sigma)-z(\alpha^{-1}\sigma)}{2}}$ and the two others are
$(-1)^{1+\frac{z(\sigma)-z(\alpha^{-1}\sigma)}{2}}$. 
\begin{table}[h]
\begin{tabular}{l|ccccc|}
    &$\gamma_{(1,3,2)}H$
    &$\gamma_{(1,2)}\delta_{(2,3)}H$
      &$\gamma_{(2,3)}\delta_{(1,3)}H$
        &$\gamma_{(1,3)}\delta_{(1,2)}H$
  &$\delta_{(1,3,2)}H$\\
  \hline
  \hline
  vertices  &$-2$&$-1$&$-1$&$-1$&$0$\\
\hline  
faces  & $0$&$-1$&$-1$&$-1$& $-2$\\
\hline
\end{tabular}  
\caption{Change in the numbers of vertices and faces after replacing $H$
  with the hypermap labeling the column.}
\label{tab:vf}
\end{table}
We obtain the stated equality in all six cases. 
\end{proof}  

\section{The reliability polynomial and the random cluster model}
\label{sec:rel}

In this section we generalize the reliability polynomial and the random
cluster model from graphs and maps to hypermaps. Our construction works
for hypermaps in general, but its specialization to hypermaps with short
hyperedges is particularly interesting as the results only depend on the
underlying (multi)hypergraph structure and not the topological
information. 

Given a connected graph $G$ on the vertex set $V$ and edge set $E$,
the {\em reliability polynomial}
$${\mathcal R}(G;p)=\sum_{A} p^{|A|} (1-p)^{|E-A|}$$
expresses the probability of the graph remaining
connected after an accident where each edge survives with probability
$p$ and gets destroyed with probability $1-p$, independently. Here the
sum is over all spanning subsets $A$ of $E$. It is well known
(see~\cite[Equation~(3.3)]{Welsh}) that ${\mathcal R}(G;p)$ may be
expressed in terms of the Tutte polynomial $T(G;x,y)$ of the graph as
follows:
\begin{equation}
\label{eq:relp}
  {\mathcal R}(G;p)= q^{|E|-|V|+1} p^{|V|-1} T(G;1,q^{-1}),
\end{equation}  
where $q=1-p$. First we introduce a probabilistic model that
allows generalizing~\eqref{eq:relp} from maps to hypermaps. 

Let $(\sigma,\alpha)$ be a hypermap on the set $\{1,\ldots,n\}$, and
let $t\geq 0$ be a fixed parameter. After an accident, each cycle of
$\alpha$ is independently replaced by a random refinement, resulting in
a random refinement $\beta$ of $\alpha$. The description of the random
event is the same for each cycle $(i_1,\ldots,i_m)$ of $\alpha$. The
refinements of $(i_1,\ldots,i_m)$ 
are precisely the noncrossing partitions on the set $\{i_1,\ldots,i_m\}$
with respect to the circular order $(i_1,\ldots,i_m)$. The number of noncrossing
partitions of an $m$ element set into $k$ parts is the {\em Narayana
  number}
$$
N(m,k)=\frac{1}{m} \binom{m}{k}\binom{m}{k-1},
$$
see~\cite[(1)]{Simion-noncrossing}. The {\em Narayana polynomial
  $N_m(t)$} is defined as 
$$
N_m(t)=\sum_{k=0}^m N(m,k) t^k.
$$
We will only consider these polynomials for positive values of $m$,
when the constant term of $N_m(t)$ is zero. Hence we may define the {\em
  reduced Narayana polynomials} as
$$
\widetilde{N}_m(t)=\sum_{k=1}^m N(m,k) t^{k-1}.
$$
We define the probability of a given noncrosssing partition into $k$
parts replacing $(i_1,\ldots,i_m)$ to be
$t^{k-1}/\widetilde{N}_m(t)$. It is 
immediate from the definitions that the sum of the probabilities of all
refinements of $(i_1,\ldots,i_m)$ is $1$. Furthermore, for $t>0$ the
probability $t^{k-1}/\widetilde{N}_m(t)$ is the same as
$t^{k}/N_m(t)$. For $m=2$ we obtain that the probabilities of the
survival, respectively destruction of an edge are
\begin{equation}
  \label{eq:pq}
p=\frac{1}{1+t}\quad\mbox{respectively}\quad q=\frac{t}{1+t}.
\end{equation}  
The probability that we obtain a hypermap $(\sigma,\beta)$ after our
random accident is 
\begin{equation}
\label{eq:relp1}  
{\mathcal R}(\sigma,\alpha;t)=\prod_{i=1}^n
\frac{1}{\widetilde{N}_i(t)^{z_i(\alpha)}} \sum_{\beta}
t^{z(\beta)-z(\alpha)}.
\end{equation}
Here $z_i(\alpha)$ is the number of cycles of $\alpha$ of length $i$ and 
the summation is over all spanning refinements $\beta$, that is, over all
refinements satisfying $\kappa(\sigma,\beta)=1$. 
\begin{definition}
We define the function ${\mathcal R}(\sigma,\alpha;t)$ given in
\eqref{eq:relp1} as the {\em reliability function} of the hypermap
$(\sigma,\alpha)$. 
\end{definition}
\begin{remark}
The reliability function is a rational function but not
necessarily polynomial of $t$, even in the case when $(\sigma,\alpha)$
is a map. The reliability function of a map is obtained from its
reliability polynomial using~\eqref{eq:pq}. See~\eqref{eq:relpm}
below.
\end{remark}
After multiplying both sides with 
$t^{z(\alpha)}$ we may rewrite~\eqref{eq:relp1} as 
\begin{equation}
\label{eq:relp2}  
{\mathcal R}(\sigma,\alpha;t)=\prod_{i=1}^n
\frac{1}{N_i(t)^{z_i(\alpha)}} \sum_{\beta}
t^{z(\beta)}.
\end{equation}
This rational function yields the correct probabilities for any $t>0$, but
for $t=0$ we need to take its right limit at $t\rightarrow 0^+$ to
obtain the correct probability.
Observe next that evaluating the Whitney polynomial
$R(\sigma,\alpha;u,v)$ at $u=0$ and $v=t^{-1}$ yields
$$
R(\sigma,\alpha;0,t^{-1})=\sum_{\beta} t^{-n-1+z(\beta)+z(\sigma)}
$$
where the summation is over all spanning refinements $\beta$ of
$\alpha$. Indeed, substituting $u=0$ into the factor
$u^{\kappa(\sigma,\beta)-\kappa(\sigma,\alpha)}$ yields a factor of $1$ if
$\kappa(\sigma,\beta)=\kappa(\sigma,\alpha)=1$ and it yields a factor of
$0$ in all other cases. Comparing the last display formula
with~\eqref{eq:relp1} and \eqref{eq:relp2} we obtain
\begin{equation}
\label{eq:relp3}  
{\mathcal R}(\sigma,\alpha;t)
      = t^{n+1-z(\sigma)-z(\alpha)}\prod_{i=1}^n
\frac{1}{\widetilde{N}_i(t)^{z_i(\alpha)}}
R(\sigma,\alpha;0,t^{-1})\quad\mbox{and}
\end{equation}
\begin{equation}
\label{eq:relp4}  
{\mathcal R}(\sigma,\alpha;t)
      = t^{n+1-z(\sigma)}\prod_{i=1}^n
\frac{1}{N_i(t)^{z_i(\alpha)}} R(\sigma,\alpha;0,t^{-1}).
\end{equation}
For maps, we have $z_1(\alpha)+z_2(\alpha)=z(\alpha)$ and
$z_1(\alpha)+2z_2(\alpha)=n$. Hence the number of nontrivial
edges is $z_2(\alpha)=n-z(\alpha)$. Keeping in mind
$\widetilde{N}_2(t)=1+t$, Equation~\eqref{eq:relp3}
may be rewritten as \begin{equation}
\label{eq:relpm}  
{\mathcal R}(\sigma,\alpha;t)
      = \frac{t^{z_2(\alpha)-z(\sigma)+1}}{(1+t)^{z_2(\alpha)}}
R(\sigma,\alpha;0,t^{-1}).
\end{equation}
This equation is easily seen to be equivalent to \eqref{eq:relp},
using the substitution rules \eqref{eq:pq}.

As for graphs, we may extend our hypermap model to a generalization of the {\em
  random cluster model} introduced by Fortuin and
Kasteleyn~\cite{Fortuin-Kasteleyn}. We will extend the notation used by
Welsh~\cite[Section 4]{Welsh}. Let $(\sigma,\alpha)$ be a collection of
hypermaps  on
the set of points $\{1,\ldots,n\}$, and
let $\beta$ be a refinement of $\alpha$. We redefine the probability of
$\beta$ to be
\begin{equation}
  \label{eq:mubeta}
\mu(\beta) = Z^{-1}\cdot Q^{\kappa(\sigma,\beta)}
\frac{t^{z(\beta)-z(\alpha)}}{\prod_{i=1}^n \widetilde{N}_i(t)^{z_i(\alpha)}}.  
\end{equation}   
Here $Q$ and $t$ are nonnegative parameters and $Z$ is the normalizing
factor defined by $\sum_{\beta\leq \alpha} \mu(\beta)=1$. In other
words, $Z$ is the {\em partition function} of the model, given by
\begin{equation}
  \label{eq:Z}
Z(\sigma,\alpha;Q,t)=\sum_{\beta\leq \alpha} Q^{\kappa(\sigma,\beta)}
\frac{t^{z(\beta)-z(\alpha)}}{\prod_{i=1}^n \widetilde{N}_i(t)^{z_i(\alpha)}}. 
 \end{equation} 
Observe next that evaluating the Whitney polynomial
$R(\sigma,\alpha;u,v)$ at $u=Qt$ and $v=t^{-1}$ yields
$$
R(\sigma,\alpha;Qt,t^{-1})=\sum_{\beta\leq \alpha}
Q^{\kappa(\sigma,\beta)-\kappa(\sigma,\alpha)}
t^{-n+z(\beta)+z(\sigma)-\kappa(\sigma,\alpha)}. 
$$
Comparing this equation with \eqref{eq:Z} we obtain
\begin{equation}
  \label{eq:Zh}
Z(\sigma,\alpha;Q,t)=\frac{Q^{\kappa(\sigma,\alpha)}
t^{n+\kappa(\sigma,\alpha)-z(\sigma)-z(\alpha)}}{\prod_{i=1}^n
  \widetilde{N}_i(t)^{z_i(\alpha)}}\cdot  R(\sigma,\alpha;Qt,t^{-1}). 
\end{equation}  
For maps, \eqref{eq:Zh} may be rewritten as 
\begin{equation}
  \label{eq:Zm}
Z(\sigma,\alpha;Q,t)=\frac{Q^{\kappa(\sigma,\alpha)}
t^{z_2(\alpha)+\kappa(\sigma,\alpha)-z(\sigma)}}{
  (1+t)^{z_2(\alpha)}}\cdot  R(\sigma,\alpha;Qt,t^{-1}), 
\end{equation}  
which, using the substitution rules~\eqref{eq:pq}, is  easily seen to
be equivalent to
$$
Z(\sigma,\alpha;Q,t)=p^{z(\sigma)-\kappa(\sigma,\alpha)}q^{z_2(\alpha)-z(\sigma)+\kappa(\sigma,\alpha)}\cdot
Q^{\kappa(\sigma,\alpha)}\cdot 
R\left(\sigma,\alpha;\frac{Qq}{p},q^{-1}-1\right).  
$$
This is~\cite[(4.1)]{Welsh} in our notation.

\section{Associated two-colored maps}
\label{sec:a2cm}

In the rest of this manuscript we develop formulas for counting the
number of spanning hyperforests in a collection of hypermaps with short
hyperedges. By~\cite[Proposition~2.5]{Cori-Hetyei2} this number is the
value of the Whitney polynomial evaluated at $u=v=0$. A key tool in our
computation will be to associate a collection of maps with a
$2$-coloring on the vertices, as follows. 

\begin{definition}
\label{def:2cm}
Let $(\sigma,\alpha)$ be a collection of hypermaps with short hyperedges 
on the set of points $\{1,2,\ldots,n\}$. We define its {\em
  associated collection of
two-colored maps $(\Ss{\alpha}(\sigma),\Aa{\alpha}(\alpha))$} as follows: 
\begin{enumerate}
\item We keep each cycle of $\sigma$ as a cycle of $\Ss{\alpha}(\sigma)$. 
\item We keep each $2$-cycle $(i,j)$ of $\alpha$ as a cycle of $\Aa{\alpha}(\alpha)$.
\item For each each $3$-cycle $(i,j,k)$ of $\alpha$ we add the cycle 
  $(i',k',j')$ to $\Ss{\alpha}(\sigma)$ and the set of edges $\{(i,i'), (j,j'), (k,k')\}$ 
to $\Aa{\alpha}(\alpha)$. Here each point $i'$ is different from the set of points
contained in $\{1,2,\ldots,n\}$ and $i'\neq j'$ holds whenever $i\neq j$.
\end{enumerate}  
We call the originally present points of the set $\{1,2,\ldots,n\}$ {\em
  black points} and the newly added points $i'$ added
by rule~(3) {\em white points}. We color each vertex in
$\Ss{\alpha}(\sigma)$ with the common color of all of its points.
\end{definition}

\begin{remark}
Our definition is somewhat reminiscent to the one presented by
Walsh~\cite{Walsh} who transforms every hypermap $(\sigma,\alpha)$ into
a bipartite map by replacing all hyperedges with vertices and creating edges
between the original vertices and the vertices representing the
hyperedges. The main difference is that we don't transform the
$2$-cycles of $\alpha$ into vertices, hence our
associated collection of two-colored maps is not bipartite when $\alpha$
has a $2$-cycle. 
\end{remark}   

\begin{example}
\label{ex:hypermap-g}
The hypermap $(\sigma,\alpha)$ given by
$$\sigma=(1,4,8)(2,5)(3,6)(7)\quad\mbox{and}\quad
\alpha=(1,2,3)(4,5)(6,7,8)$$
is shown on the left hand side of
Fig.~\ref{fig:hypermap}. The associated two-colored hypermap,
given by
\begin{align*}\Ss{\alpha}(\sigma)&=(1,4,8)(2,5)(3,6)(7)(1',3',2')(6',8',7')\quad\mbox{and}\\
  \Aa{\alpha}(\alpha)&=(4,5)(1,1')(2,2')(3,3')(6,6')(7,7')(8,8'),
\end{align*}  
is shown on the right hand side of the same figure. 
\end{example}

\begin{figure}[h]
%100%  
\begin{center}
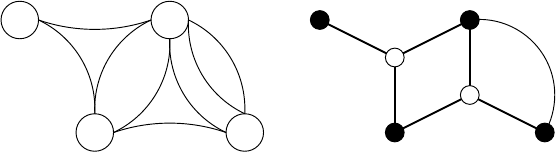
\end{center}
\caption{A hypermap with short hyperedges and its associated two-colored
map}
\label{fig:hypermap} 
\end{figure}

If $(\sigma,\alpha)$ is a hypermap with short hyperedges then the
associated collection of two-colored maps $(\Ss{\alpha}(\sigma),\Aa{\alpha}(\alpha))$ is a
map. More generally we have the following result. 

\begin{proposition}
\label{prop:samekappa}
Let $(\sigma,\alpha)$ be a collection of hypermaps with short
hyperedges. Then then the associated collection of maps
satisfies
$\kappa(\sigma,\alpha)=
\kappa(\Ss{\alpha}(\sigma),\Aa{\alpha}(\alpha))$. 
\end{proposition}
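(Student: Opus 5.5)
We prove that the orbits of $\langle\sigma,\alpha\rangle$ on $\{1,\ldots,n\}$ are exactly the black parts of the orbits of $\langle \Ss{\alpha}(\sigma),\Aa{\alpha}(\alpha)\rangle$. Then we check that every orbit of the associated map contains a black point.

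The first step is to set up a graph-theoretic description of both orbit structures. For any pair of permutations, $x$ and $y$ lie in the same orbit of the generated group exactly when they are joined by a chain in which consecutive points are related by one generator. So it suffices to compare two connectivity relations. On the original side, points are related if they lie in a common cycle of $\sigma$ or a common cycle of $\alpha$. On the associated side, points of $\{1,\ldots,n\}\cup\{i',j',k',\ldots\}$ are related if they lie in a common cycle of $\Ss{\alpha}(\sigma)$ or of $\Aa{\alpha}(\alpha)$.

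Next we show that two black points $x,y$ are connected in $(\sigma,\alpha)$ if and only if they are connected in the associated map. For the forward direction, each step of a chain within a cycle of $\sigma$ is still available, since cycles of $\sigma$ are kept. A step within a $2$-cycle of $\alpha$ is also kept. A step within a $3$-cycle $(i,j,k)$ of $\alpha$, say from $i$ to $j$, is replaced by the path $i\to i'$ (edge $(i,i')$), then $i'\to j'$ (same cycle $(i',k',j')$ of $\Ss{\alpha}(\sigma)$), then $j'\to j$ (edge $(j,j')$). Fixed points of $\alpha$ give trivial steps.

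For the converse, take a chain in the associated map between black points. Consider any maximal segment of the chain passing through white points. Every white point $i'$ is incident only to its white vertex $(i',k',j')$ and to the edge $(i,i')$. A white segment therefore enters from some black point $a\in\{i,j,k\}$ and exits to some black point $b\in\{i,j,k\}$, all belonging to the same $3$-cycle of $\alpha$. Hence $a$ and $b$ are related by $\alpha$. Every other step is along a kept cycle of $\sigma$ or a kept $2$-cycle of $\alpha$. This yields a chain in $(\sigma,\alpha)$.

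Finally, every component of the associated map contains a black point. A white point $i'$ is joined by the edge $(i,i')$ to the black point $i$, and every white point arises this way. Hence the map sending each orbit of $\langle\sigma,\alpha\rangle$ to the orbit of the associated map containing it is well defined, injective, and surjective, which gives $\kappa(\sigma,\alpha)=\kappa(\Ss{\alpha}(\sigma),\Aa{\alpha}(\alpha))$. No step is a real obstacle. The only care needed is the converse direction, namely checking that white excursions cannot join black points from different hyperedges. This holds because each white vertex is attached exclusively to the three points of a single $3$-cycle.
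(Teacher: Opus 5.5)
Your proof is correct and follows essentially the same route as the paper. The forward direction replaces a step inside a $3$-cycle $(i,j,k)$ by the detour $i\to i'\to j'\to j$, the converse collapses each maximal white excursion (which must stay inside a single white vertex) into a step along $\alpha$, and every white point is attached to a black one via the edge $(i,i')$. The only cosmetic difference is that you work with the symmetric ``lie in a common cycle'' relation rather than the paper's directed words in the generators, which spares you the $\alpha$ versus $\alpha^2$ bookkeeping.
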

\begin{proof}
Using the edges $(i,i')$ we can see that each newly added
white point $i'$ is on the same orbit of the permutation group $\langle
\Ss{\alpha}(\sigma),\Aa{\alpha}(\alpha)\rangle$ generated by $\Ss{\alpha}(\sigma)$ and $\Aa{\alpha}(\alpha)$ as some
black point $i$. Two black points $i$ and $j$
belong to the same cycle of $\sigma$ if and only if they belong to the
same cycle of $\Ss{\alpha}(\sigma)$. Assume next that $i$ and $j$ belong to the same
cycle of $\alpha$. If this cycle is a $2$-cycle, then $(i,j)$ is also a
cycle of $\Aa{\alpha}(\alpha)$. If this cycle is a cycle of the form $(i,j,k)$ then
$i$ and $j$ are on the same orbit of $\langle \Ss{\alpha}(\sigma),\Aa{\alpha}(\alpha)\rangle$ via
$$
i\xrightarrow{\Aa{\alpha}(\alpha)} i'\xrightarrow{\Ss{\alpha}(\sigma)^2} j'
\xrightarrow{\Aa{\alpha}(\alpha)} j.
$$
It may be shown similarly that  $i$ and $j$ are on the same $\langle
\Ss{\alpha}(\sigma),\Aa{\alpha}(\alpha)\rangle$-orbit if $(i,k,j)$ is a cycle of $\alpha$ for
some $k$.
Assume finally that two black points are $i$ and $j$ are on
the same orbit of $\langle \Ss{\alpha}(\sigma),\Aa{\alpha}(\alpha)\rangle$ and consider a
sequence of operations $\tau_m\tau_{m-1}\cdots \tau_1$, where each
$\tau_{k}$ belongs to $\{\Ss{\alpha}(\sigma),\Aa{\alpha}(\alpha)\}$, taking $i$ into $j$:
$$
i\xrightarrow{\tau_1} i_1 \xrightarrow{\tau_2} i_2 \xrightarrow{\tau_3} \cdots
\xrightarrow{\tau_m} i_m=j.  
$$
If some $\tau_k$ takes an black point $i_{k-1}$ into an
black point $i_k$ then we either have $i_k=\sigma(i_{k-1})$
and $\tau_k$ may be replaced by a copy of $\sigma$, or $(i_{k-1},i_k)$ is
a $2$-cycle of $\alpha$, hence $\tau_k$ may be replaced by a copy of
$\alpha$.  If some $\tau_k$ takes an black point $i_{k-1}$
into a white point $i_k$ then there is a least $\ell>k$ such that 
$i_{\ell}$ is once again an black point. The labeled
directed path
$$
i_{k-1}\xrightarrow{\tau_k} i_k \xrightarrow{\tau_{k+1}} \cdots
\xrightarrow{\tau_{\ell}} i_{\ell}  
$$
must be of the form
$$
i_{k-1}\xrightarrow{\Aa{\alpha}(\alpha)} i_{k-1'} \xrightarrow{\Ss{\alpha}(\sigma)} \cdots
\xrightarrow{\Ss{\alpha}(\sigma)} i_{\ell-1}' \xrightarrow{\Aa{\alpha}(\alpha)} i_{\ell}  
$$
and we may replace this entire segment either by $i_{k-1}\xrightarrow{\alpha}
i_{\ell}$ or by $i_{k-1}\xrightarrow{\alpha^2} i_{\ell}$. 
\end{proof}

The details of the proof of the next statement are similar to that of
Proposition~\ref{prop:samekappa} and omitted.

\begin{proposition}
\label{prop:samefaces}
Let $(\sigma,\alpha)$ be a collection of hypermaps  with short hyperedges and
let $(\Ss{\alpha}(\sigma),\Aa{\alpha}(\alpha))$ be its associated two-colored collection of
maps. Then the cycles of $\Aa{\alpha}(\alpha)^{-1}\Ss{\alpha}(\sigma)$ are obtained form the
cycles of $\alpha^{-1}\sigma$ by inserting each white point $i'$
between $\sigma^{-1} i$ and $(\alpha^{-1}\sigma)\sigma^{-1} i$ in the
cycle of $\alpha^{-1}\sigma$ containing $i$.
\end{proposition}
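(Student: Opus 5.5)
The plan is to compute $\Aa{\alpha}(\alpha)^{-1}\Ss{\alpha}(\sigma)$ directly, one point at a time, splitting into black and white points. Permutations are composed right to left, as in $\alpha^{-1}\sigma$. The only data needed are the local rules from Definition~\ref{def:2cm}:
\begin{itemize}
\item $\Ss{\alpha}(\sigma)$ agrees with $\sigma$ on black points, and sends $i'\mapsto k'\mapsto j'\mapsto i'$ for each $3$-cycle $(i,j,k)$ of $\alpha$.
\item $\Aa{\alpha}(\alpha)$ agrees with $\alpha$ on points lying in $1$- or $2$-cycles of $\alpha$, and swaps $i\leftrightarrow i'$ for points $i$ in $3$-cycles.
\end{itemize}
Since $\Aa{\alpha}(\alpha)$ is an involution on the points in $3$-cycles, its inverse acts in the same way there.

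First take a black point $x$ and put $y=\sigma(x)$, which is again black.
\begin{itemize}
\item If $y$ lies in a cycle of $\alpha$ of length at most $2$, then $\Aa{\alpha}(\alpha)^{-1}\Ss{\alpha}(\sigma)(x)=\alpha^{-1}(y)=\alpha^{-1}\sigma(x)$. The image is computed exactly as in $\alpha^{-1}\sigma$.
\item If $y$ lies in a $3$-cycle, then $\Aa{\alpha}(\alpha)^{-1}\Ss{\alpha}(\sigma)(x)=y'$, a white point.
\end{itemize}
Next take a white point $y'$, where $y$ belongs to a $3$-cycle $(i,j,k)$ of $\alpha$. Because the white cycle is $(i',k',j')$, the permutation $\Ss{\alpha}(\sigma)$ sends $y'$ to $(\alpha^{-1}y)'$. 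One checks this on the three cases, for example $i'\mapsto k'=(\alpha^{-1}i)'$. Applying $\Aa{\alpha}(\alpha)^{-1}$ then gives the black point $\alpha^{-1}(y)=\alpha^{-1}\sigma(x)$.

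Together these give
$$x\mapsto y'\mapsto \alpha^{-1}\sigma(x).$$
So in each cycle of $\Aa{\alpha}(\alpha)^{-1}\Ss{\alpha}(\sigma)$ the black points appear in the order given by $\alpha^{-1}\sigma$. Between $x$ and $\alpha^{-1}\sigma(x)$ there is either nothing or the single white point $(\sigma x)'$.

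Finally, rewrite this in terms of $i=y$. The point $x$ is $\sigma^{-1}i$, and its successor in $\alpha^{-1}\sigma$ is $(\alpha^{-1}\sigma)\sigma^{-1}i$. This is exactly the insertion rule in the statement. To conclude, I would note two further facts:
\begin{itemize}
\item Each white point $i'$ is inserted exactly once, since $\sigma$ is a bijection.
\item Every white point lies on a cycle containing a black point, because its image is black.
\end{itemize}
Hence no cycle consists only of white points, and the cycles of $\Aa{\alpha}(\alpha)^{-1}\Ss{\alpha}(\sigma)$ correspond bijectively to the cycles of $\alpha^{-1}\sigma$, with the white points inserted as stated.

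The main point needing care is the orientation bookkeeping in the second step. One must verify that the reversed white cycle $(i',k',j')$ makes $\Ss{\alpha}(\sigma)$ act on white points as $\alpha^{-1}$ rather than $\alpha$. If the convention were off, the face structure would come out wrong. I would settle this by checking the three points of $(i,j,k)$ explicitly.
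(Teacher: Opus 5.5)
Your proof is correct and is the same argument as the paper's. Both compute $\Aa{\alpha}(\alpha)^{-1}\Ss{\alpha}(\sigma)$ point by point and get $\sigma^{-1}i\mapsto i'\mapsto \alpha^{-1}(i)$; the paper writes $\alpha^2(i)$, which agrees with $\alpha^{-1}(i)$ on a $3$-cycle. Your remarks that each white point is inserted exactly once and that no cycle is all white fill in bookkeeping the paper leaves implicit.

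One point of reading: your computation places $i'$ in the face of $\alpha^{-1}\sigma$ through $\sigma^{-1}i$, and that face need not contain $i$. For example, when $\sigma=\alpha=(1,2,3)$, the point $1'$ joins the fixed point $3$, giving the face $(3,1')$. So this is the correct interpretation of the statement's phrase ``the cycle of $\alpha^{-1}\sigma$ containing $i$''.
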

Indeed, a white point $i'$ is taken by $\Ss{\alpha}(\sigma)$ into
$\alpha^2(i)'$ and this is taken by $\Aa{\alpha}(\alpha)^{-1}=\Aa{\alpha}(\alpha)$ into
$\alpha^2(i)$. Similarly, $\Ss{\alpha}(\sigma)$ takes $\sigma^{-1}(i)$ into $i$ and
this one is taken by $\Aa{\alpha}(\alpha)^{-1}=\Aa{\alpha}(\alpha)$ into $i'$. We obtain
$$
\sigma^{-1} i\xrightarrow{\Aa{\alpha}(\alpha)^{-1}\Ss{\alpha}(\sigma)} i'
\xrightarrow{\Aa{\alpha}(\alpha)^{-1}\Ss{\alpha}(\sigma)} \alpha^2(i)=(\alpha^{-1}\sigma)\sigma^{-1} i. 
$$
\begin{corollary}
\label{cor:samefaces}  
The collection of hypermaps $(\sigma,\alpha)$ and its associated
collection of two-colored maps have the same number
of faces:  
$z(\alpha^{-1}\sigma)=z(\Aa{\alpha}(\alpha)^{-1}\Ss{\alpha}(\sigma))$ holds. 
\end{corollary}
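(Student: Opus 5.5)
The plan is to derive Corollary~\ref{cor:samefaces} directly from Proposition~\ref{prop:samefaces}. That proposition describes the cycles of $\Aa{\alpha}(\alpha)^{-1}\Ss{\alpha}(\sigma)$ completely: they are the cycles of $\alpha^{-1}\sigma$ with some white points inserted. So it suffices to show that this insertion never creates a new cycle and never merges or splits existing ones. Then the cycles of the two face permutations are in bijection, and $z(\alpha^{-1}\sigma)=z(\Aa{\alpha}(\alpha)^{-1}\Ss{\alpha}(\sigma))$ follows.

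The first step is to check the three kinds of transitions of $\Aa{\alpha}(\alpha)^{-1}\Ss{\alpha}(\sigma)$, using that $\Aa{\alpha}(\alpha)$ is an involution.

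\emph{Black to black.} Take a black point $j$ with $\sigma(j)=i$. If $i$ lies in a $1$-cycle or a $2$-cycle of $\alpha$, then $\Aa{\alpha}(\alpha)$ acts on $i$ exactly as $\alpha^{-1}$ does. Hence $j\mapsto \alpha^{-1}\sigma(j)$, as in the original hypermap.

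\emph{Black to white.} If $i$ lies in a $3$-cycle, then $j\mapsto i'$.

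\emph{White to black.} By the computation after Proposition~\ref{prop:samefaces}, $i'\mapsto\alpha^2(i)=\alpha^{-1}(i)=\alpha^{-1}\sigma(j)$. Here we use that $\Ss{\alpha}(\sigma)$ sends $i'$ to $(\alpha^2 i)'$, since the white vertex is $(i',k',j')$ for the hyperedge $(i,j,k)$.

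So every white point is entered from exactly one black point and exits to a black point. Deleting the white points and composing the two steps $j\to i'\to\alpha^{-1}\sigma(j)$ therefore recovers $\alpha^{-1}\sigma$ on the black points exactly.

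The second step is to exclude faces made only of white points. This follows from the white-to-black computation: every white point is followed by a black point. Consequently each cycle of $\Aa{\alpha}(\alpha)^{-1}\Ss{\alpha}(\sigma)$ contains at least one black point, and restricting it to its black points gives a cycle of $\alpha^{-1}\sigma$. The map from cycles of the new face permutation to cycles of $\alpha^{-1}\sigma$ is thus well defined. It is injective because the black points of distinct cycles are disjoint. It is surjective because every black point lies in some cycle.

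The only real obstacle is making the white-to-black step rigorous, that is, confirming the orientation convention $(i',k',j')$ so that the image is $\alpha^{-1}(i)$ rather than $\alpha(i)$. This was already recorded in the discussion after Proposition~\ref{prop:samefaces}, so the corollary follows immediately.
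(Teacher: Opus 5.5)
Your proposal is correct and follows the paper's route. The paper reads the corollary off Proposition~\ref{prop:samefaces}, whose justification is the same transition computation you give ($\sigma^{-1}(i)\mapsto i'\mapsto\alpha^{-1}(i)$), since inserting white points into existing cycles of $\alpha^{-1}\sigma$ cannot change how many cycles there are; your extra checks (black points in $1$- and $2$-cycles of $\alpha$ move exactly as under $\alpha^{-1}\sigma$, and no face consists only of white points) just make explicit what the paper leaves implicit.
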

Using Proposition~\ref{prop:samekappa} and Corollary~\ref{cor:samefaces} it is
easy to show that the associated collection of two-colored maps also has
the same genus:
\begin{proposition}
A collection of hypermaps $(\sigma,\alpha)$ with short hyperedges
satisfies $g(\sigma,\alpha)=g(\Ss{\alpha}(\sigma),\Aa{\alpha}(\alpha))$.
\end{proposition}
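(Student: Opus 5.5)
We compare the two sides of Jacques' formula~(\ref{eq:genusdef}) term by term. We use its version for collections of hypermaps, since the genus of a collection is the sum over components. Summing the component formulas gives
$$
n+2\kappa(\sigma,\alpha)-2g(\sigma,\alpha)=z(\sigma)+z(\alpha)+z(\alpha^{-1}\sigma).
$$
We write this identity both for $(\sigma,\alpha)$ and for $(\Ss{\alpha}(\sigma),\Aa{\alpha}(\alpha))$ and check that the difference of the two right hand sides equals the difference of the $n$-terms on the left. Proposition~\ref{prop:samekappa} says the $\kappa$ terms agree, and Corollary~\ref{cor:samefaces} says the face counts agree. It will then remain to account for points, vertices and hyperedges.

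Let $z_i(\alpha)$ denote the number of $i$-cycles of $\alpha$ for $i=1,2,3$. Passing to the associated map adds $3z_3(\alpha)$ white points, so $n'=n+3z_3(\alpha)$. Each $3$-cycle contributes one new white vertex and no black vertex is changed, so $z(\Ss{\alpha}(\sigma))=z(\sigma)+z_3(\alpha)$.

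The hyperedges need the only small care, namely the treatment of buds. Fixed points of $\alpha$ remain fixed points of $\Aa{\alpha}(\alpha)$. This is implicit in Definition~\ref{def:2cm}, since the black points not listed in rules (2) and (3) are fixed. Two-cycles are kept. Each $3$-cycle is replaced by three $2$-cycles. Hence $z(\Aa{\alpha}(\alpha))=z(\alpha)+2z_3(\alpha)$.

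Adding these, the right hand side increases by $z_3(\alpha)+2z_3(\alpha)+0=3z_3(\alpha)$. This is exactly the increase of $n$. Since the $\kappa$ terms agree, we conclude $2g(\sigma,\alpha)=2g(\Ss{\alpha}(\sigma),\Aa{\alpha}(\alpha))$.

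We do not expect a real obstacle here. The only point needing care is to apply the summed, component-wise version of Jacques' formula, since the genus of a collection was defined as the sum over its components. This is exactly why Proposition~\ref{prop:samekappa} is needed alongside Corollary~\ref{cor:samefaces}.
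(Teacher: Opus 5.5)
Your proposal is correct and is essentially the paper's proof. The paper likewise records $z(\Ss{\alpha}(\sigma))=z(\sigma)+z_3(\alpha)$ and $z(\Aa{\alpha}(\alpha))=z(\alpha)+2z_3(\alpha)$ and combines them with the genus formula, Proposition~\ref{prop:samekappa} and Corollary~\ref{cor:samefaces}; your summed form $n+2\kappa-2g=z(\sigma)+z(\alpha)+z(\alpha^{-1}\sigma)$ together with $n'=n+3z_3(\alpha)$ just spells out what the paper leaves implicit.
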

\begin{proof}  
Let us denote the number of $3$-cycles of $\alpha$ by
$z_3(\alpha)$. Using this notation
\begin{align}
  z(\Ss{\alpha}(\sigma))&=z(\sigma)+z_3(\alpha)\quad\mbox{and}\\
z(\Aa{\alpha}(\alpha))&=z(\alpha)+2z_3(\alpha)
\end{align}
are direct consequences of the definitions. Using these equations, the
statement is a direct consequence of the genus formula and
of Propositions~\ref{prop:samekappa} and \ref{prop:samefaces}.   
\end{proof}

We conclude this section with the description of an operation that
represents each refinement $\beta$ of $\alpha$  with a refinement
$\Rr{\alpha}(\beta)$ of $\Aa{\alpha}(\alpha)$ in the map $(\Ss{\alpha}(\sigma),\Aa{\alpha}(\alpha))$.
\begin{definition}
Let $(\sigma,\alpha)$ be a collection of hypermaps with short hyperedges and let
$(\Ss{\alpha}(\sigma),\Aa{\alpha}(\alpha))$ be the
associated collection of two-colored maps. To each
refinement $\beta$ of $\alpha$ we associate a refinement $\Rr{\alpha}(\beta)$ of
$\Aa{\alpha}(\alpha)$ by applying the following modifications to the
cycles of $\Aa{\alpha}(\alpha)$:
\begin{enumerate}
\item For each $2$-cycle $(i,j)$ of $\alpha$ which is replaced by the pair
  of fixpoints $(i)(j)$ in $\beta$ we also replace the $2$-cycle $(i,j)$
  of $\Aa{\alpha}(\alpha)$ with $(i)(j)$ in $\Rr{\alpha}(\beta)$. 
\item For each $3$-cycle $(i,j,k)$ of $\alpha$ which is replaced by 
  $(i,j)(k)$ in $\theta$ we replace the edge $(k,k')$ of $\Aa{\alpha}(\alpha)$ with
  $(k)(k')$ in $\Rr{\alpha}(\beta)$. 
\item For each $3$-cycle $(i,j,k)$ of $\alpha$ which is replaced by the
  triplet of fixpoints $(i)(j)(k)$ in $\theta$ we replace the edges $(i,i')$,
  $(j,j')$ and $(k,k')$ of $\Aa{\alpha}(\alpha)$ with $(i)(i')$,
  $(j)(j')$ and $(k)(k')$ in $\Rr{\alpha}(\beta)$.  
\end{enumerate}  
\end{definition}  
\begin{figure}[h]
%100%  
\begin{center}
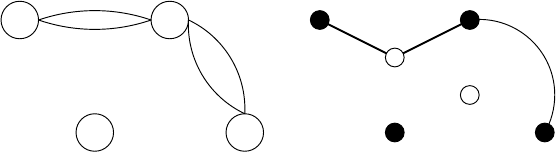
\end{center}
\caption{The collection of maps $(\Ss{\alpha}(\sigma),\Rr{\alpha}(\beta))$ for $\beta=(4,5)(7,8)$}
\label{fig:refinement} 
\end{figure}
Consider the hypermap $(\sigma,\alpha)$ introduced in
Example~\ref{ex:hypermap-g} and the refinement $\beta=(4,5)(7,8)$ of
$\alpha$. The cycle $(6,7,8)$ of $\alpha$ is replaced by $(7,8)$ in
$\beta$ and $(3,6)$ becomes an isolated vertex of
$(\sigma,\beta)$. Correspondingly, we delete the edge $(6,6')$ in
$(\Ss{\alpha}(\sigma),\Aa{\alpha}(\alpha))$ and this turns the black vertex $(3,6)$ also into an
isolated point. The collection of maps $(\Ss{\alpha}(\sigma),\Rr{\alpha}(\beta))$ also contains
the additional isolated white vertex $(1',3',2')$. No connected component of
$(\sigma,\beta)$ corresponds to such an isolated white vertex, because
it contains only white points. 
\begin{definition}
Let $(\sigma,\alpha)$ be a collection of hypermaps with short hyperedges
and let $\beta$ be a refinement of $\alpha$. We call a cycle of
$\Ss{\alpha}(\sigma)$ a {\em singularity of $\beta$} if it
represents a $3$-cycle $(i,j,k)$ of $\alpha$ that is replaced with
$(i)(j)(k)$ in $\beta$. We denote the set of singularities of $\beta$ by
$S(\beta)$.    
\end{definition}
The following key result relates the collection of maps
$(\Ss{\alpha}(\sigma), \Rr{\alpha}(\beta))$
to the collection of maps $(\Ss{\beta}(\sigma),
\Aa{\beta}(\beta))$. 
\begin{theorem}
\label{thm:refinement}  
Let $(\sigma,\alpha)$ be a collection of hypermaps with short hyperedges
and let $\beta$ be a refinement of $\alpha$. Then the collection of maps
 $(\Ss{\beta}(\sigma)), \Aa{\beta}(\beta))$ may be obtained
from $(\Ss{\alpha}(\sigma)), \Rr{\alpha}(\beta))$   by
performing the following transformations. 
\begin{enumerate}
\item We remove the singularities of $\beta$: the cycles $(i',k',j')\in
  S(\beta)$ and the points they contain.
\item For each cycle $(i,j,k)$ in $\alpha$ that is replaced by $(i,j)(k)$
  in $\beta$ we apply the contraction $(i,i')$, discard the buds $i'$
  and $i$ and replace the white point $j'$ with the black point $i$.  
\end{enumerate}  
\end{theorem}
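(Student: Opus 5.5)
The plan is a direct, local comparison. Both $(\Ss{\beta}(\sigma),\Aa{\beta}(\beta))$ and $(\Ss{\alpha}(\sigma),\Rr{\alpha}(\beta))$ are assembled cycle by cycle from the cycles of $\sigma$ and of $\alpha$ (respectively $\beta$). Since $\beta$ refines $\alpha$, every cycle of $\beta$ lies inside a single cycle of $\alpha$. Hence I will fix one cycle $\gamma$ of $\alpha$ and compare the part of each collection of maps built from $\gamma$. The black vertices, i.e.\ the cycles of $\sigma$, appear unchanged in both constructions until the contraction step, so only the white vertices and the edges attached to $\gamma$ need to be tracked.

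The cases split by the length of $\gamma$ and by how $\beta$ refines it. If $\gamma$ is a fixed point, nothing is contributed on either side. If $\gamma=(i,j)$ and $\beta$ keeps it, both sides contain the edge $(i,j)$. If $\beta$ splits it into $(i)(j)$, rule (1) of the definition of $\Rr{\alpha}(\beta)$ gives $(i)(j)$, which agrees with $\Aa{\beta}(\beta)$. If $\gamma=(i,j,k)$ is kept in $\beta$, both sides contain the white vertex $(i',k',j')$ and the edges $(i,i'),(j,j'),(k,k')$. If $\gamma$ becomes $(i)(j)(k)$, then $\Rr{\alpha}(\beta)$ removes all three edges, so $(i',k',j')$ is an isolated white vertex, which is exactly a singularity. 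Transformation (1) deletes it, and what remains is the three black fixed points, matching $\Aa{\beta}(\beta)$, where $\gamma$ contributes nothing.

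The remaining case, and the one I expect to be the real work, is $\gamma=(i,j,k)$ becoming $(i,j)(k)$; the other two $2+1$ splittings follow by cyclic relabelling. In $\Rr{\alpha}(\beta)$ we keep the edges $(i,i')$ and $(j,j')$, while $k$ and $k'$ become fixed points. I will compute the topological contraction $((i,i')\Ss{\alpha}(\sigma),(i,i')\Rr{\alpha}(\beta))$ explicitly. Multiplying the black vertex $(\ldots,\sigma^{-1}(i),i,\sigma(i),\ldots)$ by $(i,i')$ on the left merges it with the white vertex $(i',k',j')$ into a single cycle. That cycle has the white points $k',j'$ spliced into the black cycle next to $i$ and $i'$, and the edge $(i,i')$ turns into the two buds $i$ and $i'$. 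After discarding these buds, I will check two things. First, the merged cycle, with $j'$ renamed $i$, is exactly the original cycle of $\sigma$ through $i$. Second, the surviving edge $(j,j')$ becomes $(j,i)$, the $2$-cycle of $\beta$. I will verify both by tracing images point by point under the product permutation.

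The main obstacle is this bookkeeping. One must confirm that the cyclic order produced by the contraction places the renamed point exactly where $i$ sat in $\sigma$. One must also handle the leftover white bud $k'$: I expect it to end up as a fixed point of the contracted map sitting in the merged vertex, and it has to be shown that discarding it, implicitly together with $i'$, restores the original cycle. I will first try $\sigma$-cycle of $i$ equal to $(i)$ to fix conventions, and then do the general case. Since all modifications act on disjoint sets of points for different cycles of $\alpha$, assembling the cases proves the theorem.
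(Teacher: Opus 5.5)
Your proposal is correct and follows essentially the same route as the paper. In the paper's proof, singularities are isolated white vertices and are simply deleted. In the $(i,j)(k)$ case, the contraction $(i,i')$ produces the merged cycle $(\ldots,\sigma^{-1}(i),i',k',j',i,\sigma(i),\ldots)$. Discarding the buds $i$, $i'$ and $k'$ (the paper discards $k'$ explicitly, as you anticipated) and renaming $j'$ as $i$ then restores the cycle of $\sigma$ and turns $(j,j')$ into the $2$-cycle $(i,j)$ of $\beta$.
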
  
\begin{proof}
An important difference between $(\Ss{\alpha}(\sigma),
\Aa{\alpha}(\alpha))$ and $(\Ss{\beta}(\sigma),
\Aa{\beta}(\beta))$  is that $\Ss{\beta}(\sigma)$ is only a
subset of $\Ss{\alpha}(\sigma)$: the set of black vertices is the
same, but for each $3$-cycle $(i,j,k)$ of $\alpha$ that is replaced by a proper
refinement of $\beta$, the white vertex $(i',k',j')$ is only present in
$\Aa{\alpha}(\alpha)$ and must be removed from
$\Ss{\beta}(\sigma)$. The set of these white vertices is the set
$\Aa{\alpha}(\alpha)-\Ss{\beta}(\sigma)$.

Removing all singularities is a straightforward move: these form
isolated vertices in $(\Ss{\alpha}(\sigma),
\Rr{\alpha}(\beta))$. Consider now a $3$-cycle $(i,j,k)$ of
$\alpha$ that is replaced by $(i,j)(k)$ in $\beta$. An example of such a
cycle is the cycle $(7,8,6)$ in
Figure~\ref{fig:refinement}. Corresponding to such a refined cycle we
have the white vertex $(i',k',j')$, the edges $(i,i')$ and $(j,j')$ and
the bud $k'$ in $(\Ss{\alpha}(\sigma),
\Rr{\alpha}(\beta))$. Performing the contraction $(i,i')$ merges the white vertex $(i',k',j')$
with the black vertex containing $i$ as follows:
$$
(\ldots,\sigma^{-1}i,i',k',j',i,\ldots ) 
$$
After the contraction $(i,i')$ the points $i$ and $i'$ become buds, and
we still have the edge $(j,j')$. Discarding the buds $i,i'$ and $k'$ and
relabeling $j'$ as $i$ restores the cycle of $\sigma$ containing $i$ and
creates an edge $(i,j)$.
\end{proof}  

Combining Propositions~\ref{prop:samekappa} and \ref{prop:samefaces}
with Theorem~\ref{thm:refinement} we obtain the following formulas.

\begin{corollary}
Let $(\sigma,\alpha)$ be a collection of hypermaps with short hyperedges
and let $\beta$ be a refinement of $\alpha$. Then we have
\begin{align}
  \kappa(\Ss{\alpha}(\sigma),\Rr{\alpha}(\beta))&=
  \kappa(\sigma,\alpha)+|S(\beta)|\quad\mbox{and}\\ 
  z(\Rr{\alpha}(\beta)^{-1}\Ss{\alpha}(\sigma))&=
  z(\beta^{-1}\sigma)+|S(\beta)|. 
\end{align}  
\end{corollary}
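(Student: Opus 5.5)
The plan is to compare both sides through the intermediate collection of maps $(\Ss{\beta}(\sigma),\Aa{\beta}(\beta))$. Its number of components and faces is controlled by Proposition~\ref{prop:samekappa} and Corollary~\ref{cor:samefaces}, applied to the collection of hypermaps $(\sigma,\beta)$. This is legitimate, since $\beta$ refines $\alpha$ and so $(\sigma,\beta)$ again has short hyperedges. Those two results give
$$\kappa(\Ss{\beta}(\sigma),\Aa{\beta}(\beta))=\kappa(\sigma,\beta)\quad\mbox{and}\quad z(\Aa{\beta}(\beta)^{-1}\Ss{\beta}(\sigma))=z(\beta^{-1}\sigma).$$
It therefore remains to track how each transformation in Theorem~\ref{thm:refinement} changes the number of components and the number of faces when we pass from $(\Ss{\alpha}(\sigma),\Rr{\alpha}(\beta))$ to $(\Ss{\beta}(\sigma),\Aa{\beta}(\beta))$.

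\emph{Step 1 (singularities).} Each singularity $(i',k',j')\in S(\beta)$ is a white vertex all of whose three points are fixed by $\Rr{\alpha}(\beta)$. It is therefore a connected component of its own. Restricted to it, $\Rr{\alpha}(\beta)^{-1}\Ss{\alpha}(\sigma)$ equals $(i',k',j')$, so it also carries exactly one face. Removing the singularities thus lowers both $\kappa$ and $z(\Rr{\alpha}(\beta)^{-1}\Ss{\alpha}(\sigma))$ by exactly $|S(\beta)|$.

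\emph{Step 2 (partially refined $3$-cycles).} For each $3$-cycle $(i,j,k)$ replaced by $(i,j)(k)$ in $\beta$, we perform the contraction $(i,i')$, delete the buds $i,i',k'$, and rename $j'$ as $i$. A topological contraction preserves both the number of components and the number of faces; this was noted in the Preliminaries for hypercontractions. Deleting a bud point from a vertex only shortens a cycle of the face permutation, so it changes neither count, and relabeling a point changes nothing.

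Combining the two steps gives
$$\kappa(\Ss{\alpha}(\sigma),\Rr{\alpha}(\beta))=\kappa(\sigma,\beta)+|S(\beta)|\quad\mbox{and}\quad z(\Rr{\alpha}(\beta)^{-1}\Ss{\alpha}(\sigma))=z(\beta^{-1}\sigma)+|S(\beta)|.$$
The second line is exactly the stated face formula. For the first formula as worded, with $\kappa(\sigma,\alpha)$ on the right-hand side, one must additionally identify $\kappa(\sigma,\beta)$ with $\kappa(\sigma,\alpha)$.

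This identification is the step I expect to be the real obstacle. It holds precisely when the refinement $\beta$ is spanning in the sense of not increasing the number of components; this is the case in the intended application to spanning hyperforests and hypertrees in the following sections, where $\beta$ always satisfies $\kappa(\sigma,\beta)=\kappa(\sigma,\alpha)$. For an arbitrary refinement the argument above yields $\kappa(\sigma,\beta)$ in place of $\kappa(\sigma,\alpha)$. The counterexample is $\alpha=(1,2)$, $\sigma=(1)(2)$, $\beta=\mathrm{id}$: here $S(\beta)=\emptyset$, the left-hand side is $2$, and $\kappa(\sigma,\alpha)=1$. So in writing the proof I would state the first formula under the hypothesis $\kappa(\sigma,\beta)=\kappa(\sigma,\alpha)$, equivalently in the general form with $\kappa(\sigma,\beta)$, and derive the worded version from it.
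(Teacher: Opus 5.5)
Your argument is correct and follows the route the paper has in mind. The paper gives no separate proof, only the remark that the corollary follows by combining Propositions~\ref{prop:samekappa} and~\ref{prop:samefaces} (applied to $(\sigma,\beta)$) with Theorem~\ref{thm:refinement}, which is exactly your tracking of components and faces through the removal of singularities and the contract-and-discard step. You are also right that the first identity as printed is a typo: the argument yields $\kappa(\sigma,\beta)+|S(\beta)|$, your counterexample $\sigma=(1)(2)$, $\alpha=(1,2)$, $\beta=\mathrm{id}$ is valid, and the corrected form is what the proof of Theorem~\ref{thm:shtrees} actually needs to characterize spanning hypertrees.
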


\section{A formula for the number of spanning hypertrees}
\label{sec:sht}

In this section we prove a formula expressing the number of spanning
hypertrees of a hypermap $(\sigma,\alpha)$ with short hyperedges as a
weighted sum of the numbers of spanning trees of certain subgraphs of
the underlying graph of its associated two-colored map.  
To state our main result we introduce the following notation and
terminology.
\begin{definition}
Let $(\sigma,\alpha)$ be a hypermap with short hyperedges. We denote the
underlying graph of its associated two-colored map
$(\Ss{\alpha}(\sigma),\Aa{\alpha}(\alpha))$ with
$G(\sigma,\alpha)$ and call it the {\em associated two-colored graph of
  $(\sigma,\alpha)$}.  We denote the set of all, respectively white
vertices of $G(\sigma,\alpha)$  with $V(\sigma,\alpha)$, respectively
$W(\sigma,\alpha)$.  
\end{definition}  
\begin{definition}
Let $G$ be a graph on the vertex set $V$ and let $X$ be a subset of
$V$. We call the {\em restriction of $G$ to $X$} the graph whose vertex
set is $X$ and whose edges are exactly those edges of $G$ which are
incident to a pair of vertices contained in $X$. We denote the
restriction of $G$ by $\rest{G}{X}$.
\end{definition}
Using the above notation and terminology, the main result of this
section is the following.
\begin{theorem}
\label{thm:shtrees}
Let $(\sigma,\alpha)$ be a hypermap with short hyperedges and
$G(\sigma,\alpha)$ its associated two-colored
graph. Then the number of the spanning hypertrees of $(\sigma,\alpha)$
is given by 
$$
\sum_{S\subseteq W(\sigma,\alpha)} (-2)^{|S|}
s\left(\rest{G(\sigma,\alpha)}{V(\sigma,\alpha)-S}\right). 
$$
Here the function $s(G)$ associates to each graph $G$ the number of its spanning
trees.  
\end{theorem}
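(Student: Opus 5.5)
The plan is to reduce the count of spanning hypertrees to a count of spanning trees in $G(\sigma,\alpha)$, and then remove an overcount by inclusion--exclusion over white vertices.

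\emph{Step 1: spanning hypertrees as a combinatorial condition.} A refinement $\beta$ of $\alpha$ gives a spanning hypertree exactly when $\kappa(\sigma,\beta)=1$ and $g(\sigma,\beta)=0$ with a single face. By~(\ref{eq:genusdef}), a connected unicellular genus-zero $(\sigma,\beta)$ is the same as a connected $(\sigma,\beta)$ with $n-z(\beta)=z(\sigma)-1$.

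Because hyperedges are short, every set partition of a $2$- or $3$-cycle is noncrossing. So $\beta$ amounts to an independent choice, for each cycle of $\alpha$, among the following options:
\begin{itemize}
\item for a $2$-cycle: keep it or split it;
\item for a $3$-cycle $(i,j,k)$: keep it whole, replace it by one of the three pairs plus a fixed point, or split it into three fixed points.
\end{itemize}
The condition above then says that the multi-hypergraph of blocks of $\beta$ of size $\ge 2$ is connected on the vertices, and that $\sum(|B|-1)=z(\sigma)-1$. In other words, its vertex--block incidence structure is a tree. Blocks are counted as labelled by the points $i,j,k$, so parallel incidences and loops are distinguished, exactly as the edges of $G(\sigma,\alpha)$ are.

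\emph{Step 2: spanning trees of $G$ versus hypertrees.} Fix a spanning tree $T$ of $\rest{G(\sigma,\alpha)}{V(\sigma,\alpha)-S}$. Each remaining white vertex $w$, representing a $3$-cycle $(i,j,k)$, has degree $1$, $2$ or $3$ in $T$.
\begin{itemize}
\item Degree $3$: read this as the whole block $(i,j,k)$.
\item Degree $2$, using edges $(i,i'),(j,j')$: read this as the block $(i,j)$ with $(k)$ a fixed point.
\item Degree $1$: read this as the three fixed points.
\item Each white vertex in $S$: read this as the three fixed points.
\item Each $2$-cycle edge of $\alpha$: kept if and only if it lies in $T$.
\end{itemize}
Contracting every white vertex of degree $\ge 2$ into a hyperedge, and deleting white leaves, turns $T$ into a tree structure of the kind described in Step~1. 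Conversely, a spanning hypertree $\beta$ lifts to spanning trees of $\rest{G}{V-S}$: each non-split $3$-cycle is replaced by its star at $w$, and each fully split white vertex not in $S$ is hung on any one of its three edges.

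I expect the main thing to check carefully is that this correspondence is exact. The trees lying over $\beta$ must be precisely these, with no further constraint. This is essentially the standard fact that the incidence (Walsh-type) graph of a hypertree is a tree, together with a check that vertex and edge counts match.

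\emph{Step 3: inclusion--exclusion.} Let $\beta$ be a spanning hypertree, and let $F$ be the set of white vertices whose $3$-cycles are fully split in $\beta$. A white vertex removed in $S$ must be fully split, so only $S\subseteq F$ contributes to $\beta$. For such $S$, the number of spanning trees of $\rest{G}{V-S}$ lying over $\beta$ is $3^{|F|-|S|}$, one choice of hanging edge for each white leaf in $F-S$. The total weight of $\beta$ in the sum of the theorem is therefore
$$\sum_{S\subseteq F}(-2)^{|S|}3^{|F|-|S|}=(3-2)^{|F|}=1 .$$
Summing over all $\beta$ gives the stated formula.
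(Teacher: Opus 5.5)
Your proof is correct and follows essentially the paper's argument. Both identify the spanning hypertrees whose set of fully split $3$-cycles is $S$ with the spanning trees of $\rest{G(\sigma,\alpha)}{V(\sigma,\alpha)-S}$ that have no white leaves, and both use that a white leaf can be reattached in exactly three ways; the paper does inclusion--exclusion over sets of white leaves and then uses $\sum_{S_2\subseteq S}(-3)^{|S_2|}=(-2)^{|S|}$, whereas you group the trees by their hypertree and use $(3-2)^{|F|}=1$ (the same identity read in the other direction), and you derive the tree correspondence directly from the genus formula rather than from the component and face counts of Section~\ref{sec:a2cm}.
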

\begin{proof}
Consider a spanning hypertree $(\sigma,\theta)$ of $(\sigma,\alpha)$.
We define the subgraph $H(\sigma,\theta)$ of $G(\sigma,\alpha)$ as the
underlying graph of the collection of maps
$(\Ss{\alpha}(\sigma),\Rr{\alpha}(\theta))$. Equivalently,
$H(\sigma,\theta)$ is the subgraph obtained after deleting the following edges:
\begin{enumerate}
\item For each $2$-cycle $(i,j)$ of $\alpha$ which is replaced by the pair
  of fixpoints $(i)(j)$ in $\theta$ we delete the corresponding edge of
  $G(\Ss{\alpha}(\sigma),\Aa{\alpha}(\alpha))$.
\item For each $3$-cycle $(i,j,k)$ of $\alpha$ which is replaced by 
  $(i,j)(k)$ in $\theta$ we delete the edge $(k,k')$.
\item For each $3$-cycle $(i,j,k)$ of $\alpha$ which is replaced by the
  triplet of fixpoints $(i)(j)(k)$ in $\theta$ we delete the edges $(i,i')$,
  $(j,j')$ and $(k,k')$ in $G(\Ss{\alpha}(\sigma),\Aa{\alpha}(\alpha))$.  
\end{enumerate}  
An example of a spanning hypertree $(\sigma,\theta)$ of the hypermap
$(\sigma,\alpha)$ introduced in Example~\ref{ex:hypermap-g} and the
associated subgraph $H(\sigma,\theta)$ is shown in
Figure~\ref{fig:shtree}. Note that the white vertex $(1',3',2')$ has
become an isolated point, it belongs to the set $S(\theta)$ of
singularities of $\theta$. As a consequence
of the results in the preceding section, $(\sigma,\theta)$ is a spanning
hypertree of $(\sigma,\alpha)$ if and only if $H(\sigma,\theta)$ is a
spanning tree of the graph 
$\rest{G(\sigma,\alpha)}{V(\sigma,\alpha)-S(\theta)}$.
\begin{figure}[h]
%100%  
\begin{center}
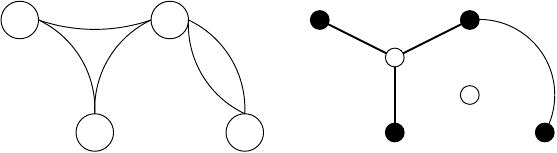
\end{center}
\caption{The collection of maps $(\Ss{\alpha}(\sigma),\theta'')$ for $\theta=(4,5)(6,7,8)$}
\label{fig:shtree} 
\end{figure}
For a fixed subset $S_1$ of the white vertices, consider the set of all
spanning hypertrees $(\sigma,\theta)$ such that $S(\theta)=S_1$. The map
$(\sigma,\theta)\mapsto 
\rest{H(\sigma,\theta)}{V(G(\sigma,\theta))-S_1}$ is injective, but
reaches only a subset of all spanning trees of
$\rest{G(\sigma,\theta)}{V(G(\sigma,\theta))-S_1}$: consider, once again
the hypermap $(\sigma,\alpha)$ introduced in
Example~\ref{ex:hypermap-g}, let us select $S_1=\emptyset$ and consider
the spanning tree of $G(\sigma,\theta)$ shown in
Figure~\ref{fig:unreachable}. The white vertex $(1',3',2')$  has become
a leaf, which is not possible according to our rules, which call for the
removal of zero, one, or three edges incident to a white vertex.    

\begin{figure}[h]
%100%  
\begin{center}
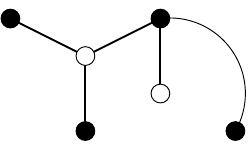
\end{center}
\caption{An unreachable spanning tree}
\label{fig:unreachable} 
\end{figure}
Given an arbitrary spanning tree of
$\rest{G(\sigma,\theta)}{V(G(\sigma,\theta))-S_1}$, let us denote the
set of its white leaves by $S_2$. The spanning tree corresponds to a
spanning hypertree of $(\sigma,\theta)$ if and only if
$S_2=\emptyset$. We count the number of such spanning trees by
inclusion-exclusion. Keeping in mind that the removal of all white
leaves from a spanning tree results in a spanning tree of the
restriction of $G(\sigma,\theta)$ to the remaining set of vertices, and that the
reattachment of each white leaf is possible in exactly three independent
ways, we obtain that
the number of all spanning hypertrees of $(\sigma,\theta)$ is given by
$$
\sum_{S_1\subseteq W(\sigma,\alpha)}\sum_{S_2\subseteq
  W(\sigma,\alpha)-S}  (-3)^{|S_2|}
s\left(\rest{G(\sigma,\alpha)}{V(\sigma,\alpha)-(S_1\cup S_2)}\right). 
$$
The statement now follows after introducing $S$ as the summation index
and using the formula $\sum_{S_2\subseteq S}
(-3)^{|S_2|}=(1-3)^{|S|}=(-2)^{|S|}$. 
\end{proof}

\section{Reciprocals of maps with maximum degree $3$}
\label{sec:3rec}

The formula provided by Theorem~\ref{thm:shtrees} provides a formula for
the number of spanning hypertrees in the reciprocal of a map
$(\sigma,\alpha)$ whose vertices have maximum degree $3$. This formula
only depends on the underlying graph of the map. By definition, the
associated two-colored graph $G(\alpha,\sigma)$ of the reciprocal
hypermap may be constructed as follows:
\begin{enumerate}
\item We associate a black vertex to each edge of $(\sigma,\alpha)$.
\item If two edges of $(\sigma,\alpha)$ are incident at a vertex of degree
  $2$, we connect them with an edge.
\item We associate a white vertex to each vertex of $(\sigma,\alpha)$
  that has degree $3$ and we connect them to the black vertices
  representing the incident edges.  
\end{enumerate}

\begin{figure}[h]
%100%  
\begin{center}
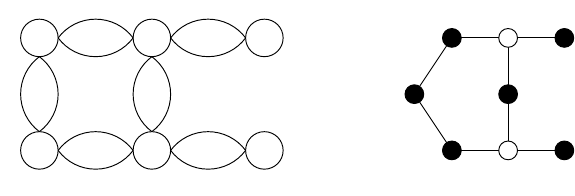
\end{center}
\caption{The map $(\sigma,\alpha)$ and the
  two-colored graph $G(\alpha,\sigma)$}
\label{fig:3rec} 
\end{figure}
Figure~\ref{fig:3rec} represents the map $(\sigma,\alpha)$ and the
two-colored graph $G(\alpha,\sigma)$ for
$\sigma=(1,10)(2,12,3)(4)(5,9)(6,7,11)(8)$ and
$\alpha=(1,2)(3,4)(5,6)(7,8),(9,10)(11,12)$. We selected the numbering of
points on the left as a helpful guide, but the process described above
would be the same for any map with the same underlying
graph. Theorem~\ref{thm:shtrees} has the following consequence 
\begin{corollary}
If $(\sigma,\alpha)$ is a map in which the maximum degree of a vertex is
$3$ then the number of spanning hypertrees in the reciprocal hypermap
$(\alpha,\sigma)$ depends only on the underlying graph of $(\sigma,\alpha)$.
\end{corollary}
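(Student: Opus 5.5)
The plan is to show that Theorem~\ref{thm:shtrees} applied to the reciprocal $(\alpha,\sigma)$ produces a number determined entirely by the abstract graph $G(\alpha,\sigma)$. In turn, $G(\alpha,\sigma)$ is determined, up to isomorphism preserving the white vertex set, by the underlying graph of $(\sigma,\alpha)$. First note that $(\alpha,\sigma)$ is a hypermap with short hyperedges. Its hyperedges are the cycles of $\sigma$, that is, the vertices of the map, and these have length at most $3$ since the maximum degree is $3$. Hence Theorem~\ref{thm:shtrees} applies to $(\alpha,\sigma)$. Its formula expresses the number of spanning hypertrees as
$$\sum_{S\subseteq W(\alpha,\sigma)}(-2)^{|S|}\, s\bigl(\rest{G(\alpha,\sigma)}{V(\alpha,\sigma)-S}\bigr).$$
This quantity is invariant under any graph isomorphism of $G(\alpha,\sigma)$ that maps white vertices to white vertices, because $s$ is an isomorphism invariant and restriction commutes with isomorphism.

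Second, verify from Definition~\ref{def:2cm} (with the roles of the two permutations swapped) that $G(\alpha,\sigma)$ is exactly the graph described by rules (1)--(3) at the start of this section. The black vertices of $G(\alpha,\sigma)$ are the cycles of $\alpha$, i.e.\ the edges (and buds) of the map. A $2$-cycle $(i,j)$ of $\sigma$, i.e.\ a vertex of degree $2$, becomes an edge joining the black vertices containing $i$ and $j$; these are precisely the two map-edges incident at that vertex. A $3$-cycle $(i,j,k)$ of $\sigma$ produces a white vertex $(i',k',j')$ with edges $(i,i')$, $(j,j')$, $(k,k')$, joining it to the black vertices of the three incident map-edges. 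Fixed points of $\sigma$ (degree-$1$ vertices) contribute nothing. The cyclic order inside the white vertex is irrelevant for the underlying graph.

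Third, conclude that all these data are read off from incidences between vertices and edges of the underlying graph of $(\sigma,\alpha)$, with multiplicities. A loop at a vertex contributes the same black vertex twice, producing a loop or a multiple edge in $G(\alpha,\sigma)$, and this is still determined by the multigraph. Two maps with the same underlying graph therefore yield isomorphic two-colored graphs, with the isomorphism respecting colors, and hence the same count. The main obstacle is keeping the bookkeeping of loops and multiple edges honest, so that ``underlying graph'' is interpreted as a multigraph with loops, and then checking that $s$ and the restriction operation treat the multi-edges consistently. I would handle this by treating all graphs throughout as multigraphs and noting that $s$ counts edge subsets.
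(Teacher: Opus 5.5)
Your proposal is correct and follows the paper's argument: apply Theorem~\ref{thm:shtrees} to the reciprocal $(\alpha,\sigma)$, and observe that $G(\alpha,\sigma)$, together with its set of white vertices, is built from the underlying graph by the three rules at the start of Section~\ref{sec:3rec}, so the alternating sum depends only on that graph. Your handling of loops and multiple edges is more careful than the paper's; the one further point worth stating is that buds of the map also become black vertices, so the ``underlying graph'' must record them---for instance, adding a bud at one endpoint of a double edge changes the count from $2$ to $3$.
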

In the application of Theorem~\ref{thm:shtrees} we do not need to
consider all subsets of the white vertices, because of the following
observation. 

\begin{lemma}
\label{lem:indep}
  Let $(\sigma,\alpha)$ be a map whose vertices have degree at most $3$
  and let $G(\alpha,\sigma)$ be the two-colored map associated to the
  reciprocal hypermap. If a subset $S$ of the white vertices satisfies
  that  $\rest{G(\sigma,\alpha)}{V(\sigma,\alpha)-S}$ is a connected
  graph then $S$ must correspond to an independent set of vertices in
  the underlying graph of $(\sigma,\alpha)$.
\end{lemma}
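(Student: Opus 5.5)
We prove the contrapositive: if $S$ contains two white vertices corresponding to adjacent vertices of the underlying graph of $(\sigma,\alpha)$, then the restriction of the associated two-colored graph to $V-S$ is disconnected. (The statement writes $G(\sigma,\alpha)$, $V(\sigma,\alpha)$; we read these as $G(\alpha,\sigma)$, $V(\alpha,\sigma)$, the two-colored graph of the reciprocal described above.) We use the explicit description of $G(\alpha,\sigma)$ given at the beginning of this section. The black vertices correspond to the edges of $(\sigma,\alpha)$. The white vertices correspond to the vertices of degree $3$. Two black vertices are joined when the corresponding edges meet at a vertex of degree $2$. A white vertex $w_x$ is joined to the three black vertices representing the edges incident to the degree-$3$ vertex $x$.

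Suppose $x$ and $y$ are adjacent vertices of $(\sigma,\alpha)$ whose white vertices $w_x,w_y$ both lie in $S$. Let $b$ be the black vertex representing an edge $e$ joining $x$ and $y$. We determine all neighbors of $b$ in $G(\alpha,\sigma)$. An edge $e$ has exactly two endpoints, so $b$ is adjacent only through the structures at $x$ and at $y$. At a degree-$3$ endpoint the only neighbor is the white vertex of that endpoint. At a degree-$2$ endpoint the neighbor is the black vertex of the other edge there. Degree-$1$ endpoints contribute nothing. Since $x$ and $y$ both have degree $3$, the only neighbors of $b$ are $w_x$ and $w_y$.

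Removing $S\supseteq\{w_x,w_y\}$ therefore leaves $b$ isolated in $\rest{G(\alpha,\sigma)}{V(\alpha,\sigma)-S}$. This graph is disconnected as soon as it has another vertex. That holds unless $b$ is the only black vertex, i.e., the map has a single edge; but then no vertex has degree $3$, so the situation cannot arise.

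The main obstacle is the degenerate cases. If $e$ is a loop at $x$ (so $x=y$), it is not an adjacency in a simple-graph sense, but the same argument applies: the two half-edges of the loop both attach $b$ to $w_x$, so removing $w_x$ isolates $b$. Thus $S$ cannot even contain a white vertex carrying a loop, which is consistent with independence in the multigraph sense. If $x$ and $y$ are joined by multiple edges, the argument works for each such edge separately. We should double-check that the reciprocal construction never joins $b$ to other black vertices at a degree-$3$ vertex. It does not, because by Definition~\ref{def:2cm} a $3$-cycle contributes only the edges $(i,i')$.
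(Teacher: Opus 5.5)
Your proof is correct and takes the same approach as the paper: a black vertex representing an edge between two degree-$3$ vertices whose white vertices both lie in $S$ becomes isolated in $\rest{G(\alpha,\sigma)}{V(\alpha,\sigma)-S}$, so that graph is disconnected. Your added checks (that such a black vertex has no neighbours other than the two white vertices, that some other vertex survives the removal, and the loop case) fill in details the paper leaves implicit, and you read the paper's $G(\sigma,\alpha)$, $V(\sigma,\alpha)$ as $G(\alpha,\sigma)$, $V(\alpha,\sigma)$ correctly.
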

Indeed, if two white vertices representing adjacent vertices of degree $3$ of
$(\sigma,\alpha)$ both belong to $S$ then all black vertices connecting
them become isolated vertices after their removal, and the disconnected graph
$\rest{G(\sigma,\alpha)}{V(\sigma,\alpha)-S}$ has no spanning tree, only
spanning forests. The converse of Lemma~\ref{lem:indep} is not true in
general: the removal of any white vertex from the associated two-colored
graph shown in Figure~\ref{fig:3rec} results in a disconnected
graph. The construction of $G(\alpha,\sigma)$ has a very simple
description in the case when each vertex of $\sigma$ has degree $3$.
\begin{corollary}
Let $(\sigma,\alpha)$ be a map whose underlying graph is a $3$-regular
graph. Then the associated two-colored graph $G(\alpha,\sigma)$ of the
reciprocal hypermap may be obtained as follows:
\begin{enumerate}
\item We paint each vertex white.
\item We subdivide each edge into two edges by adding a black vertex.   
\end{enumerate}  
\end{corollary}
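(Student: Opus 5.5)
The plan is to unwind the construction of $G(\alpha,\sigma)$ given just before the preceding corollary, specialized to a $3$-regular underlying graph, and to check that it coincides with the subdivision of that graph with all original vertices painted white. Throughout I use the description that the points of the reciprocal hypermap $(\alpha,\sigma)$ are the points of $(\sigma,\alpha)$. Its vertices are therefore the cycles of $\alpha$, that is, the edges of the map. Its hyperedges are the cycles of $\sigma$, that is, the vertices of the map.

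First, rule (1) of the construction of $G(\alpha,\sigma)$ places one black vertex for each edge of $(\sigma,\alpha)$. Second, since every vertex has degree exactly $3$, each cycle of $\sigma$ is a $3$-cycle of the hyperedge permutation of the reciprocal. Hence there are no $2$-cycles, and rule (2) contributes no black--black edges. This is the step where $3$-regularity is used to eliminate a case. Third, rule (3) places one white vertex for every vertex of $(\sigma,\alpha)$. Each white vertex is joined to the three black vertices representing its incident edges; this comes from the edges $(i,i')$ of Definition~\ref{def:2cm}, one for each point $i$ of the $3$-cycle.

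So every white vertex $w$ (an original vertex) is adjacent exactly to the black vertices of the edges incident to $w$. Every black vertex $b$ (an original edge $e=\{x,y\}$) is adjacent exactly to the white vertices $x$ and $y$, with multiplicity. This is precisely the graph obtained from the underlying graph by painting the original vertices white and inserting one black vertex on each edge.

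The only subtle point, and the one I expect to be the main obstacle, is degenerate edges. For a loop at $x$, both points of the $2$-cycle of $\alpha$ lie in the cycle of $\sigma$ at $x$. The black vertex is then joined to $x'$-type white points twice, and this gives a doubled edge between $b$ and $x$. That is exactly the subdivision of a loop, a $2$-cycle through $b$ and $x$. Multiple edges behave similarly. I would verify this directly from the point-level definition: each point $i$ contributes exactly one edge $(i,i')$, so the adjacency is counted with multiplicity by points.
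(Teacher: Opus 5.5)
Your proposal is correct and follows the paper's (implicit) argument: the corollary is just the three-step description of $G(\alpha,\sigma)$ specialized to the $3$-regular case, where every cycle of $\sigma$ is a $3$-cycle, so rule (2) never applies and rule (3) applies at every vertex. Your point-level check that loops and multiple edges are subdivided correctly, since each point $i$ contributes exactly one edge $(i,i')$, is a welcome detail the paper leaves unstated.
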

  
\begin{figure}[h]
%100%  
\begin{center}
\includegraphics{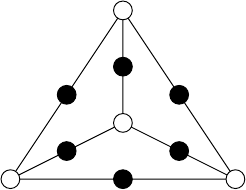}
\end{center}
\caption{$G(\alpha,\sigma)$ for $(\sigma,\alpha)$ whose underlying graph
is $K_4$}
\label{fig:k2rec} 
\end{figure}

\begin{example}
If the underlying graph of $(\sigma,\alpha)$ is the complete graph $K_4$
then the associated two-colored graph $G(\alpha,\sigma)$ is shown in
Figure~\ref{fig:k2rec}. This is a planar graph, we may draw it in the
plane even if $(\sigma,\alpha)$ does not have genus zero. Disregarding
the coloring of the vertices, the dual of the graph shown in
Figure~\ref{fig:k2rec} may be obtained by replacing each edge in $K_4$
with a pair of parallel edges. The number of spanning trees of this dual
graph is $2^3$ times the number of spanning trees of $K_4$, hence by
Cayley's theorem we obtain that the number of spanning trees in of the
dual of $G(\alpha,\sigma)$ is $2^3\times 4^{4-2}=128$. This is also the
number of spanning trees of $G(\alpha,\sigma)$. We may remove at most
one vertex of $G(\alpha,\sigma)$ without disconnecting it, and the
remaining graph has a unique circuit of length $6$. There are $4$ ways
to select the white vertex to be removed and the resulting graph has $6$
spanning trees. The total weight of these spanning trees is $(-2)\times
4\times 6=-48$. We obtain that the number of spanning hypertrees of
$(\alpha,\sigma)$ is $128-48=80$.    
\end{example}  

We devote the rest of this section to an application of
Theorem~\ref{thm:shtrees} to compute the number of spanning hypertrees
of the reciprocal of the map $(\sigma_m,\alpha_m)$, whose underlying
graph is the ``ladder'' graph with $m$ bounded faces. The map
$(\sigma_6,\alpha_6)$ and the associated two-colored graph
$G(\alpha_6,\sigma_6)$ is shown in Figure~\ref{fig:grid}.

\begin{figure}[h]
%100%  
\begin{center}
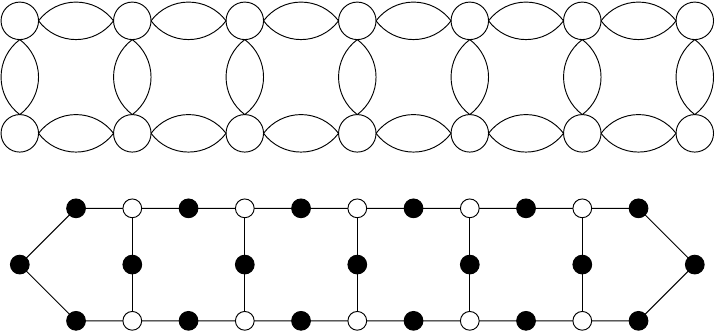
\end{center}
\caption{The map $(\sigma_6,\alpha_6)$ and the associated two-colored graph
$G(\alpha_6,\sigma_6)$}
\label{fig:grid} 
\end{figure}

We illustrate the effect of removing some white vertices in
Figure~\ref{fig:gridrec}. By Lemma~\ref{lem:indep} the set of white
vertices that we remove must correspond to a set of independent vertices in the
underlying graph of $(\sigma_m,\alpha_m)$. However, this condition is
not sufficient. The white vertices of $G(\alpha_m,\sigma_m)$ form $m-1$
columns which we number left to right, as shown
in Figure~\ref{fig:gridrec}. The removal of any pair of white vertices from two
consecutive columns disconnects $G(\alpha_m,\sigma_m)$, even if they
are selected from different rows, and hence are not adjacent.  
\begin{figure}[h]
%90%  
\begin{center}
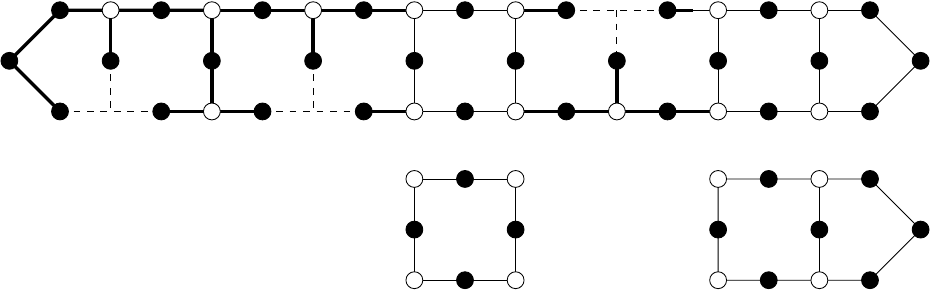
\end{center}
\caption{The result of removing some white vertices in $G(\alpha_9,\sigma_9)$}
\label{fig:gridrec}
\end{figure}
\begin{proposition}
\label{prop:lconnect}  
The removal of a set $S$ of white vertices does not disconnect
$G(\alpha_m,\sigma_m)$ if and only if $S$ satisfies the following two
conditions:
\begin{enumerate}
\item $S$ contains at most one element in each column.
\item There are no two elements of $S$ in neighboring columns.   
\end{enumerate}  
\end{proposition}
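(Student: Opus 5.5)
I would first fix an explicit description of $G(\alpha_m,\sigma_m)$. The ladder with $m$ bounded faces has vertices $a_0,\ldots,a_m$ (top row) and $b_0,\ldots,b_m$ (bottom row). Its edges are the rungs $a_ib_i$ and the rails $a_{i-1}a_i$, $b_{i-1}b_i$. The corner vertices $a_0,b_0,a_m,b_m$ have degree $2$, while the $2(m-1)$ vertices $a_i,b_i$ with $1\le i\le m-1$ have degree $3$; these give the white vertices, arranged in columns $i=1,\ldots,m-1$ with one top and one bottom white vertex per column.

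Applying the construction rules before Lemma~\ref{lem:indep}, there is one black vertex per edge of the ladder: a rung $r_i$ and rails $t_i=a_{i-1}a_i$, $u_i=b_{i-1}b_i$. Each white $a_i$ ($1\le i\le m-1$) is joined to $t_i,t_{i+1},r_i$. Each white $b_i$ is joined to $u_i,u_{i+1},r_i$. The degree-$2$ corners produce black--black edges $r_0t_1$, $r_0u_1$, $r_mt_m$, $r_mu_m$. In short, $G$ is a subdivided ladder whose white vertices are the internal rail vertices; the black rung vertices are attached to their white endpoints, and at the ends the rung black vertices are joined directly to the first and last rail blacks.

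\textbf{Necessity.} If $S$ contains both $a_i$ and $b_i$, then $r_i$ loses all its neighbours and is isolated; this is exactly Lemma~\ref{lem:indep}. If $S$ contains a white vertex from column $i$ and one from column $i+1$, there are four cases.
\begin{itemize}
\item $\{a_i,a_{i+1}\}$: the black vertex $t_{i+1}$ is isolated, as it is also for adjacent-row pairs.
\item $\{a_i,b_{i+1}\}$: the vertex set splits into two parts, everything left of the cut and everything right of it. I would exhibit the cut explicitly: every path from left to right must pass through column $i$ or column $i+1$ via $t_{i+1}$ or $u_{i+1}$. Removing $a_i$ kills the top route through column $i$, and removing $b_{i+1}$ kills the bottom route through column $i+1$. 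Any surviving crossing would have to use $t_{i+1}$ and then $u_{i+1}$, but these are joined only through $r$-vertices of columns $i$ or $i+1$, which need white neighbours that have been removed.
\item The remaining cases follow by symmetry.
\end{itemize}
I would check these cases carefully on small pictures.

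\textbf{Sufficiency.} Suppose $S$ satisfies (1) and (2). I would build a spanning connected subgraph by induction along the columns. Define the block $B_i$ to consist of column $i$ together with $r_i$ and the rails $t_{i+1},u_{i+1}$. When column $i$ is untouched, $B_i$ joins the top and bottom paths through $r_i$. When exactly one of $a_i,b_i$ is removed, the other row's path still passes column $i$, and the removed row's rail blacks $t_i,t_{i+1}$ (say) remain attached to their other white neighbours $a_{i-1}$ and $a_{i+1}$. These neighbours are present by condition (2), or, at the ends, are attached to $r_0$ or $r_m$. Also $r_i$ remains attached to the surviving white vertex of column $i$. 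Since column $i\pm1$ is untouched, $a_{i\pm1}$ reconnects to the bottom row via $r_{i\pm1}$, so the top and bottom components merge on each side. I would formalize this as the claim that, scanning left to right, after processing column $j$ all remaining vertices in columns $\le j$ lie in one component that contains a surviving vertex of column $j$ or $j+1$.

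I expect the main obstacle to be the bookkeeping at the boundary columns $1$ and $m-1$, where the corner rungs $r_0$ and $r_m$ replace a missing white column, together with the careful cut argument for the diagonal pair $\{a_i,b_{i+1}\}$.
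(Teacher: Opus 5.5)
Your proposal is correct and follows essentially the paper's argument. For necessity, you isolate a black vertex ($r_i$, or a rail vertex such as $t_{i+1}$, which is the content of Lemma~\ref{lem:indep}) and cut the graph for a diagonal pair; for sufficiency, you use the intact neighbouring column (or the corner rungs $r_0$, $r_m$) to change rows around each removed white vertex, making explicit what the paper only sketches. In the diagonal case $\{a_i,b_{i+1}\}$ the cleaner phrasing is that $t_{i+1}$ and $u_{i+1}$, the only vertices adjacent to both sides, become pendant vertices hanging on opposite sides; because only white vertices are ever deleted, this separation (like the isolated black vertices) persists for every larger $S$.
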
  
The necessity of the conditions stated in
Proposition~\ref{prop:lconnect} has just been explained. The sufficiency
is also evident: if there is an intact column of white vertices next to
each removed white vertex, we can use the intact column to change levels
and go around the removed white vertices.
\begin{definition}
A set of positive integers is {\em sparse} if it contains no consecutive
integers. 
\end{definition}  
Using this terminology we can restate the second condition in
Proposition~\ref{prop:lconnect} as follows: the numbers of columns
selected to contain an element of $S$ must form a sparse subset of
$\{1,2,\ldots, m-1\}$.

In the example shown in
Figure~\ref{fig:gridrec} we removed a white vertex from the columns
$1,3,6$. The removal of these vertices forces the deletion of the edges
incident to them: these deleted edges are represented with dashed lines
in Figure~\ref{fig:gridrec}. The bold edges become bridges of the
remaining graph: they must be included in every spanning tree. The
remaining edges form disjoint subgraph: to select a spanning tree on the
whole graph amounts to independently selecting a spanning tree on each
of these subgraphs. In Figure~\ref{fig:gridrec} two such subgraphs
exist, and a separate copy of them is shown below the drawing
representing $\rest{G(\alpha_9,\sigma_9)}{V(\alpha_9,\sigma_9)-S}$. 

In general, after the removal of the deleted and bridge edges the
disjoint components of the remaining graph belong to one of the
following three types:

\begin{figure}[h]
%90%  
\begin{center}
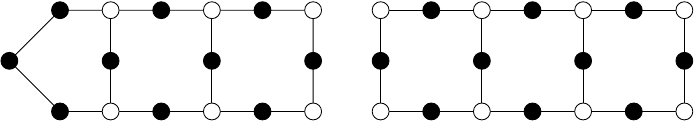
\end{center}
\caption{Components arising after the removal of the deleted and bridge edges}
\label{fig:types} 
\end{figure}

\begin{enumerate}
\item Graphs, whose bounded faces all have $8$ sides. An example of such
  a graph is shown on the left hand side of
  Figure~\ref{fig:types}. Let us denote with $F(k)$ the number of
  spanning trees of such graph having $k$ faces. (We set $F(0)=1$.)  
\item At the two extreme ends of $G(\alpha_m,\sigma_m)$ a copy of a
  graph similar to the one shown on the left hand side of
  Figure~\ref{fig:types} (or at the lower right end of
  Figure~\ref{fig:gridrec}) may arise. Among the bounded faces of such a graph
  have exactly one has $6$ sides and all the others have $8$ sides. Let
  us denote with $G(k)$ the number of spanning trees of such graph having
  $k$ faces. (We set $G(0)=1$.)  
\item Only when $S$ is the empty set, we are left to count all spanning
  trees of $G(\alpha_m,\sigma_m)$. Let us denote the number of its
  spanning trees with $H(m)$. 
\end{enumerate}  

Applying Theorem~\ref{thm:shtrees} to $G(\alpha_m,\sigma_m)$ we obtain the
following result.
\begin{proposition}
\label{prop:tmdec}
The number $T(m)$ of spanning hypertrees of $(\alpha_m,\sigma_m)$ is given by  
\begin{align*}
T(m) &=H(m)+\sum_{k=1}^{\lfloor (m-1)/2\rfloor}
(-4)^k\sum_{i_1<i_2<\cdots<i_k}
G(i_1-1)\cdot \prod_{j=1}^{k-1} F(i_{j+1}-i_j-2) \cdot G(m-i_k).
\end{align*}  
Here the summation $i_1<i_2<\cdots<i_k$ runs over all sparse subsets
$\{i_1,i_2,\ldots, i_k\}$ of $\{1,2,\ldots,m-1\}$. 
\end{proposition}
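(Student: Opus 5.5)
We apply Theorem~\ref{thm:shtrees} to the reciprocal $(\alpha_m,\sigma_m)$, whose associated two-colored graph is $G(\alpha_m,\sigma_m)$. The number of spanning hypertrees is
$$
\sum_{S} (-2)^{|S|}\, s\bigl(\rest{G(\alpha_m,\sigma_m)}{V-S}\bigr),
$$
where $S$ runs over sets of white vertices and $V$ denotes the vertex set of $G(\alpha_m,\sigma_m)$. Only the sets $S$ for which the restriction is connected contribute, since a disconnected graph has no spanning tree. By Proposition~\ref{prop:lconnect}, these are exactly the sets with at most one white vertex per column whose column indices form a sparse subset $\{i_1<\cdots<i_k\}$ of $\{1,\ldots,m-1\}$. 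Sparseness forces $k\le\lfloor (m-1)/2\rfloor$ when $k\geq 1$. The term $S=\emptyset$ contributes $s(G(\alpha_m,\sigma_m))=H(m)$.

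For a fixed sparse column set, each column contains exactly two white vertices (top and bottom row of the ladder), so there are $2^k$ choices of $S$. The key step is to show that $s\bigl(\rest{G}{V-S}\bigr)$ does not depend on which row is chosen in each column. Combined with the weight $(-2)^k$, this gives the factor $2^k(-2)^k=(-4)^k$ in the statement. I would verify row-independence by the reflection symmetry of the ladder swapping the two rows, applied locally. After removing the white vertex in column $i_j$, the black vertices adjacent to it become leaves or are incident only to bridges, as described before Figure~\ref{fig:types}. The structure of the remaining graph near the cut is then symmetric in the choice of row, and the components between cuts can be reflected independently, since they meet the rest of the graph only through bridges.

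Next I would establish the factorization. Every spanning tree of $\rest{G}{V-S}$ must contain all the bridge edges. After contracting (equivalently, deleting) these forced bridges, the graph splits into blocks, and a spanning tree is the union of independently chosen spanning trees of the blocks. I would count the blocks as follows:
\begin{enumerate}
\item The block between consecutive removed columns $i_j$ and $i_{j+1}$ consists of the $8$-sided faces in the intact columns strictly between them. There are $i_{j+1}-i_j-2$ such faces, giving a factor $F(i_{j+1}-i_j-2)$. When $i_{j+1}=i_j+2$ the block is trivial, and $F(0)=1$.
\item The two end blocks consist of one $6$-sided end face together with $8$-sided faces. They contribute $G(i_1-1)$ and $G(m-i_k)$; by the left-right symmetry of the ladder the two ends behave alike.
\end{enumerate}
Multiplying these factors and summing over sparse sets and over $k$ gives the stated formula.

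The main obstacle is the bookkeeping of face counts at the boundaries: confirming that the left end block has exactly $i_1-1$ faces and the right one $m-i_k$, and that the face count in the middle is $i_{j+1}-i_j-2$ rather than, say, $i_{j+1}-i_j-1$. I would settle this first by checking small cases, namely $m=3$ with $S$ a single vertex in column $1$, and the configuration of Figure~\ref{fig:gridrec}, counting which faces of the ladder are destroyed by removing a white vertex in a given column and confirming that the end faces are the $6$-sided ones.
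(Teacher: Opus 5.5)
Your strategy is the same as the paper's. You apply Theorem~\ref{thm:shtrees}, keep only the sets $S$ allowed by Proposition~\ref{prop:lconnect}, absorb the $2^k$ row choices into $(-4)^k$, and factor the spanning-tree count across the forced bridges into end blocks and middle blocks. Row-independence, which the paper leaves implicit, also follows directly: the surviving blocks are determined by which faces survive, hence by the column set alone. The problem is that two index claims you take over from the statement are false, so as written your argument asserts an incorrect identity.

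First, a sparse subset of $\{1,\ldots,m-1\}$ can have $\lceil (m-1)/2\rceil=\lfloor m/2\rfloor$ elements. Your claim that sparseness forces $k\le\lfloor (m-1)/2\rfloor$ therefore fails for even $m$. For $m=2$, deleting either white vertex of the unique column leaves a tree, so $T(2)=H(2)-2-2=28$, whereas the upper limit $\lfloor 1/2\rfloor=0$ would give $H(2)=32$.

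Second, number the bounded faces $1,\ldots,m$ from left to right, with face $j$ lying between white columns $j-1$ and $j$; faces $1$ and $m$ are the $6$-sided ones. Deleting a white vertex in column $i$ destroys exactly faces $i$ and $i+1$. So the surviving blocks are faces $1,\ldots,i_1-1$, faces $i_j+2,\ldots,i_{j+1}-1$, and faces $i_k+2,\ldots,m$. The right end block has $m-1-i_k$ faces, not $m-i_k$; the counts must add up to the $m-2k$ surviving faces.

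Your own symmetry remark already shows this, since the reflection sends column $i$ to column $m-i$. So does the test case you propose. For $m=3$ with one vertex deleted in column $1$, the surviving block is the single hexagon, contributing $G(1)=6$, not $G(2)=44$. Indeed $T(3)=240-4(6+6)=192$, whereas the printed formula gives $240-4(44+36)=-80$.

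What your argument actually proves is
$$
T(m)=H(m)+\sum_{k\geq 1}(-4)^k\sum_{i_1<\cdots<i_k}G(i_1-1)\cdot\prod_{j=1}^{k-1}F(i_{j+1}-i_j-2)\cdot G(m-1-i_k),
$$
summed over sparse subsets of $\{1,\ldots,m-1\}$. The paper's proof contains the same two slips: it asserts $k\le (m-1)/2$ and writes $G(m-i_k)$. However, the expansion $\TT(x)=\HH(x)+\sum_{k\geq 1}(-4x^2)^k\GG(x)^2\FF(x)^{k-1}$ used in the final theorem is exactly this corrected version. Its exponents add up to $m-2k$, and it reproduces $T(2)=28$ and $T(3)=192$.
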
  
\begin{proof}
The term $H(m)$ is contributed by the choice $S=\emptyset$. In all other
cases the numbers of the columns containing $S$ must form a sparse subset
of $\{1,2,\ldots,m-1\}$. After fixing the column numbers
$\{i_1,i_2,\ldots, i_k\}$ containing the $k\leq (m-1)/2$ elements of
$S$, we have two choices in each selected column to choose the upper or
the lower white vertex to be an element of $S$. This gives $2^k$
choices, which we multiply with the weight $(-2)^{|S|}=(-2)^k$ to get
the factor $(-4)^k$.

Deleting the edges incident to the white vertex $v$ in column number $i_j$
turns the remaining vertical edge above of below $v$ and all horizontal
edges between the columns numbered $i_{j}-1$ and $i_{j}+1$ into bridge
edges. Finally, if a white vertex is selected for removal in the first
or the last column then all remaining edges bordering the incident face
with $6$ sides turns into a bridge. The spanning trees in the
remaining graphs are counted by the product $G(i_1-1)\cdot
\prod_{j=1}^{k-1} F(i_{j+1}-i_j-2) \cdot G(m-i_k)$. 
\end{proof}
To compute the numbers $F(m)$, $G(m)$ and $H(m)$ observe that these
numbers all count spanning trees in planar graphs whose dual graph is
a ``generalized pencil'' as shown in Figure~\ref{fig:pencil}. 
\begin{figure}[h]
%90%  
\begin{center}
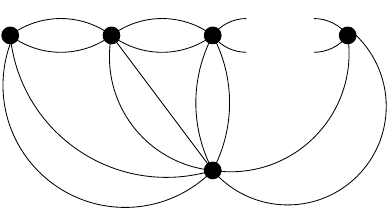
\end{center}
\caption{A generalized pencil graph}
\label{fig:pencil} 
\end{figure}
\begin{definition}
For $m\geq 1$ let $a_1,a_2,\ldots,a_m$ and $t$ be positive integers. We
define the {\em generalized pencil graph} 
$P(a_1,\ldots,a_m;t)$ as the following graph on the vertex 
set $\{u,v_1,v_2,\ldots,v_m\}$:
\begin{enumerate}
\item There are $a_i$ parallel edges between $u$ and $v_i$ for
  $i=1,\ldots,m$.
\item If $m\geq 2$ then there are $t$ parallel edges between $v_i$ and
  $v_{i+1}$ for $i=1,2,\ldots,m-1$.
\end{enumerate}  
\end{definition}
The following statements are direct consequences of the definition and
of the fact that a planar graph has the same number of spanning trees as
its dual.
\begin{corollary}
The following equalities hold:
\begin{enumerate}
\item  $F(m)$ is the number of spanning trees of
  $P(6,\underbrace{4,4,\ldots,4}_{m-2},6;2)$ for $m\geq 2$ and $F(1)=8$. 
\item $G(m)$ is the number of spanning trees of
$P(6,\underbrace{4,4,\ldots,4}_{m-1};2)$ for $m\geq 1$.
\item  $H(m)$ is the number of spanning trees of
  $P(\underbrace{4,4,\ldots,4}_{m};2)$ $m\geq 2$ and $H(1)=8$. . 
\end{enumerate}
  \end{corollary}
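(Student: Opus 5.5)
The plan is to realize each of the three families of graphs (the components of type $F(k)$, $G(k)$ and the whole graph $G(\alpha_m,\sigma_m)$ counted by $H(m)$) as a plane graph, identify its plane dual, and then invoke the fact that a connected plane graph and its dual have the same number of spanning trees. Each of these graphs is a ``strip'' of consecutive bounded faces arranged in a row, each face sharing an edge boundary only with its left and right neighbors. The dual therefore has one vertex $v_i$ per bounded face and a single vertex $u$ for the outer face. Consecutive faces $v_i,v_{i+1}$ share the path through a vertical pair of edges separated by a white vertex (a subdivided rung of the ladder), which contributes $t=2$ parallel dual edges. Every other edge on the boundary of face $i$ lies on the outer face and contributes one edge between $u$ and $v_i$. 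This is exactly the shape of the generalized pencil $P(a_1,\ldots,a_m;2)$.

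The key step is to count, for each face, the number of boundary edges not shared with a neighboring bounded face. An $8$-sided face in the interior of the strip has two shared sides of two edges each, leaving $8-2\cdot 2=4$ outer edges, so $a_i=4$. An $8$-sided face at an end of the strip has only one shared side, leaving $8-2=6$. A $6$-sided face at the end (arising at the extreme columns, where a degree-$2$ corner vertex of the ladder contributes a single black--black edge rather than a white vertex) has one shared side, leaving $6-2=4$, and its other neighbor keeps $4$ or $6$ according to the same rule. Reading off the face sequences gives the following.

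For type (1), the sequence is $8,8,\ldots,8$ with both ends free, so $a=(6,4,\ldots,4,6)$ for $k\ge 2$. For type (2), the sequence is $6,8,\ldots,8$; the $6$-gon contributes $4$ and the far $8$-gon end contributes $6$, giving $(4,4,\ldots,6)$. Read from the other end this is $(6,4,\ldots,4)$, and it matches the statement because $P(a_1,\ldots,a_m;t)\cong P(a_m,\ldots,a_1;t)$. For the full graph $H(m)$, both ends are $6$-gons and the middle faces are $8$-gons, giving all $4$'s. The degenerate cases follow directly. If $k=1$ in type (1), a single $8$-cycle has $8$ spanning trees, so $F(1)=8$. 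For $H(1)$, the single face is the whole boundary cycle; I will check directly from the construction that it has length $8$. For $G(1)$, the dual of a lone $6$-cycle would be $P(6;2)$ with $6$ spanning trees, consistent with the formula.

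The main obstacle is the bookkeeping of face lengths at the ends: verifying exactly which boundary edges become bridges or are deleted when a white vertex is removed, and confirming that the end faces really are $6$-gons, respectively $8$-gons. I would settle this by drawing the column structure from the construction of $G(\alpha_m,\sigma_m)$. Bridges and deleted edges are removed before the dual is taken, and they do not affect the spanning-tree count.
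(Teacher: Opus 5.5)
Your approach is the paper's: take the plane dual of each strip of faces, recognize it as a generalized pencil with $t=2$, and read off $a_i$ as the number of boundary edges of face $i$ that lie on the outer face. Your bookkeeping is correct and gives items (1), (2), $F(1)=8$, $G(1)=6$, and item (3) for $m\ge 2$: $8-4=4$ for interior octagons, $8-2=6$ for octagons at the end of a component, and $6-2=4$ for the hexagons at the ends of the ladder. One small correction: consecutive faces share the subdivided rung, which is the path white--black--white through the rung's black vertex. It is not two edges ``separated by a white vertex''; the count of $2$ shared edges is still right.

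The step that fails is the one you postponed, namely checking that the single face of $G(\alpha_1,\sigma_1)$ has length $8$. The one-square ladder is $C_4$, and all four of its vertices have degree $2$. So there are no white vertices, and by your own rule (a degree-$2$ corner gives a single black--black edge) $G(\alpha_1,\sigma_1)$ is a $4$-cycle on the four black vertices. Hence $H(1)=4$, not $8$. This agrees with the paper's own generating function $\HH(x)=(1-2x)^2/(1-8x+4x^2)=1+4x+32x^2+\cdots$. It also agrees with $T(1)=4$, since the reciprocal of the map $C_4$ is again $C_4$. The value $H(1)=8$ in the statement is therefore a misprint and cannot be verified. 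In fact $P(4;2)$ has $4$ spanning trees, so item (3) holds uniformly for all $m\ge 1$ with no special case. Carry out such a check rather than announcing its outcome in advance.
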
  
\begin{proposition}
\label{prop:thetarec}  
The number $\Theta(a_1,\ldots,a_m;t)$ of the number of spanning trees of
$P(a_1,\ldots,a_m;t)$ satisfies the recurrence
$$
\Theta(a_1,\ldots,a_m;t)
=a_m\cdot \Theta(a_1,\ldots,a_{m-1};t)+ t\cdot
\Theta(a_1,\ldots,a_{m-2}, a_{m-1}+a_m;t)
$$
for $m\geq 2$.
\end{proposition}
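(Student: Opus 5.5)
The plan is to classify the spanning trees of $P(a_1,\ldots,a_m;t)$ by how they meet the bundle of $t$ parallel edges between $v_{m-1}$ and $v_m$. This bundle exists because $m\geq 2$. A spanning tree contains at most one edge of this bundle, since two parallel edges would form a cycle. So the spanning trees split into two disjoint classes: those using no edge between $v_{m-1}$ and $v_m$, and those using exactly one of the $t$ edges.

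\emph{First class.} If no edge between $v_{m-1}$ and $v_m$ is used, the only remaining edges at $v_m$ are the $a_m$ parallel edges to $u$. Hence $v_m$ is a leaf of the tree, attached to $u$ by exactly one of these $a_m$ edges. Removing this leaf gives a spanning tree of the graph on $\{u,v_1,\ldots,v_{m-1}\}$, which is exactly $P(a_1,\ldots,a_{m-1};t)$. Conversely, every spanning tree of $P(a_1,\ldots,a_{m-1};t)$ together with any one of the $a_m$ edges from $u$ to $v_m$ is a spanning tree of the first class. This class therefore contributes $a_m\cdot\Theta(a_1,\ldots,a_{m-1};t)$.

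\emph{Second class.} Fix one of the $t$ edges $e$ between $v_{m-1}$ and $v_m$. We use the standard bijection between spanning trees of a graph containing a fixed non-loop edge $e$ and spanning trees of the contraction by $e$. Contracting $e$ merges $v_{m-1}$ and $v_m$ into a single vertex $w$, with the following effects:
\begin{enumerate}
\item the other $t-1$ edges of the bundle become loops, which lie in no spanning tree and may be discarded;
\item the $a_{m-1}$ edges from $u$ to $v_{m-1}$ and the $a_m$ edges from $u$ to $v_m$ become $a_{m-1}+a_m$ parallel edges between $u$ and $w$;
\item if $m\geq 3$, the $t$ edges between $v_{m-2}$ and $v_{m-1}$ become $t$ edges between $v_{m-2}$ and $w$.
\end{enumerate}
The contracted graph is thus precisely $P(a_1,\ldots,a_{m-2},a_{m-1}+a_m;t)$. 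For $m=2$ it is the single bundle $P(a_1+a_2;t)$ with $a_1+a_2$ spanning trees. Summing over the $t$ choices of $e$, the second class contributes $t\cdot\Theta(a_1,\ldots,a_{m-2},a_{m-1}+a_m;t)$.

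Adding the two contributions gives the stated recurrence. As a check, for $m=2$ the formula gives $a_1a_2+t(a_1+a_2)$, which is the number of ways to pick two edges from two different bundles of the triangle-shaped multigraph on $\{u,v_1,v_2\}$. The only point needing care is the identification of the contracted graph in the second class: the merged $u$-bundles must add up to $a_{m-1}+a_m$, and the leftover parallel edges must be recognized as loops and discarded. This is routine, so I do not expect a real obstacle.
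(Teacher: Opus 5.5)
Your proof is correct and matches the paper's argument: you split the spanning trees by whether they use one of the $t$ edges between $v_{m-1}$ and $v_m$, treat $v_m$ as a leaf attached to $u$ in the first case, and contract the chosen edge to obtain $P(a_1,\ldots,a_{m-2},a_{m-1}+a_m;t)$ in the second. You also spell out two details the paper leaves implicit, namely that the other $t-1$ parallel edges become loops and can be discarded, and that the case $m=2$ works out.
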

\begin{proof}
If a spanning tree $T$ contains no edge between $v_{m-1}$ and $v_m$ then one of
the $a_m$ edges connecting $u$ and $v_{m}$ belongs to the spanning
tree. After the removal of the vertex $v_{m}$ the remaining edges of $T$
form a spanning tree of $P(a_1,\ldots,a_{m-1};t)$, which may be selected
independently.  

If a spanning tree $T$ contains one of the edges between $v_{m-1}$ and
$v_m$, then this edge may be selected in $t$ different ways. After
contracting this edge, the resulting tree is a spanning tree of a copy of
$P(a_1,\ldots,a_{m-2}, a_{m-1}+a_m;t)$, which may be selected independently. 
\end{proof}  
Using Proposition~\ref{prop:thetarec} it is not hard to prove the
following statement. 
\begin{proposition}
\label{prop:theta}  
The number $\Theta(a_1,\ldots,a_m;t)$ of the number of spanning trees of
$P(a_1,\ldots,a_m;t)$ is given by
\begin{equation}
\label{eq:Theta}  
\Theta(a_1,\ldots,a_m;t)
=\sum_{k=1}^{m} t^{m-k}\cdot \sum_{1\leq i_1<\cdots<i_k\leq m} \prod_{j=1}^{k-1}
(i_{j+1}-i_j)\cdot a_{i_1}\cdots a_{i_k}.
\end{equation}
\end{proposition}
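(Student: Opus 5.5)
Denote the right-hand side of \eqref{eq:Theta} by $\Phi(a_1,\ldots,a_m;t)$. I would prove $\Theta=\Phi$ by induction on $m$, using the recurrence of Proposition~\ref{prop:thetarec}. It is best to do the induction for all positive integer sequences at once, because the recurrence replaces the last two parameters by their sum.

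For $m=1$ the graph $P(a_1;t)$ consists of $a_1$ parallel edges between $u$ and $v_1$, so $\Theta=a_1$. The formula gives $\Phi=t^0a_1=a_1$ (only $k=1$, $i_1=1$, empty product). For $m=2$ we have $\Theta=a_1a_2+t(a_1+a_2)$, either directly or from the recurrence with $\Theta(\,;t)$ interpreted suitably. Since the recurrence for $m=2$ involves $\Theta$ of a sequence of length $0$, I would simply check $m=2$ by hand: $\Phi$ has the term $t\,(a_1+a_2)$ for $k=1$ and the term $(2-1)a_1a_2$ for $k=2$, which matches.

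For the induction step, $m\geq 3$, it suffices to show that $\Phi$ satisfies the same recurrence. Split the index sets $I=\{i_1<\cdots<i_k\}$ in $\Phi(a_1,\ldots,a_m;t)$ according to whether $m\in I$.

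\emph{Case $m\notin I$.} These terms equal $t\cdot\Phi(a_1,\ldots,a_{m-1};t)$, because the exponent $m-k$ is one more than $(m-1)-k$.

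\emph{Case $m\in I$.} Write $i_{k-1}=p$. The weight is then the weight of $I\setminus\{m\}$ multiplied by $(m-p)a_m$ (with weight $a_m$ alone if $k=1$).

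On the other side, $a_m\Phi(a_1,\ldots,a_{m-1};t)+t\,\Phi(a_1,\ldots,a_{m-2},a_{m-1}+a_m;t)$ must be expanded. In the second $\Phi$, each term whose index set contains $m-1$ splits into an $a_{m-1}$ part and an $a_m$ part.
\begin{itemize}
\item The $a_{m-1}$ part, together with the terms not containing $m-1$, reproduces $t\Phi(a_1,\ldots,a_{m-1};t)$ after multiplying by $t$. This equals the $m\notin I$ contribution.
\item The $a_m$ part has index set $J\cup\{m-1\}$, relabelled as $J\cup\{m\}$. Its last gap is $(m-1)-p$ instead of $m-p$, and its power of $t$ is $t^{(m-1)-|J|-1}\cdot t=t^{m-|J|-1}$, which is the right exponent.
\end{itemize}
The missing amount $1\cdot a_m\cdot(\text{weight of }J)$, summed over all $J\subseteq\{1,\ldots,m-1\}$ with power $t^{m-1-|J|}$, is exactly $a_m\Phi(a_1,\ldots,a_{m-1};t)$. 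This uses the identity $(m-p)=(m-1-p)+1$. The case $J=\emptyset$ ($k=1$, $I=\{m\}$) needs separate care: the term $t^{m-1}a_m$ must come from the $a_m$ part of $\{m-1\}$ in the second $\Phi$, which gives $t\cdot t^{m-2}a_m$. Hence it has no $+1$ correction, and this is consistent.

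The main obstacle is this bookkeeping: matching the last gap $(i_k-i_{k-1})$ via $(m-p)=(m-1-p)+1$ and getting the $t$-exponents right in each of the three groups of terms, including the edge case $k=1$. Once the three groups are aligned the identity is immediate, and the induction closes.
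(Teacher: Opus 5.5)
Your proof is correct, but it takes a different route from the paper's. The paper remarks that the formula follows from Proposition~\ref{prop:thetarec}, yet its actual proof is a direct count that never uses the recurrence:
\begin{itemize}
\item choose the set $\{v_{i_1},\ldots,v_{i_k}\}$ of vertices joined to $u$, which gives the factor $a_{i_1}\cdots a_{i_k}$;
\item note that all links before $i_1$ and after $i_k$ must be present, while between $v_{i_j}$ and $v_{i_{j+1}}$ exactly one of the $i_{j+1}-i_j$ consecutive links is absent;
\item choose one of the $t$ parallel copies for each of the remaining $m-k$ links.
\end{itemize}

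You instead verify algebraically that the right-hand side of \eqref{eq:Theta} satisfies the recurrence and the initial value. You split on whether $m\in I$ and use $(m-p)=(m-1-p)+1$. I checked your bookkeeping and the induction closes. This includes the $k=1$ term. It also includes the case $p=m-1$, where the $a_m$-part of $t\,\Theta(a_1,\ldots,a_{m-2},a_{m-1}+a_m;t)$ contributes nothing because the gap $m-1-p$ is $0$. You handle that case only implicitly, but it is consistent.

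The two approaches buy different things. The paper's argument explains where each factor of \eqref{eq:Theta} comes from and needs no induction or base cases. Yours is mechanical and requires knowing the formula in advance, but it relies only on the already-proved recurrence. It is essentially the proof the paper's introductory sentence hints at.

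One small slip: for $m=2$ the recurrence involves $\Theta(a_1;t)$ and $\Theta(a_1+a_2;t)$, both sequences of length $1$, not a sequence of length $0$. So your induction step already works from $m=2$ on, with $m=1$ as the only base case.
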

\begin{proof}
Let us select a spanning tree $T$ by first selecting the set
$\{v_{i_1}, v_{i_2},\ldots, v_{i_k}\}$ of vertices connected
to $u$ by an edge. This set can not be empty, hence
we must have $k\geq 1$. The number of ways to select the edges incident
to $u$ is $a_{i_1}\cdots a_{i_k}$. Besides these $k$ edges there are
exactly $m-k$ edges in the spanning tree connecting $v_i$ and $v_{i+1}$ for some
$i\in\{1,2,\ldots,m-1\}$. To avoid $2$-cycles we may select at most one
edge between each $v_i$ and $v_{i+1}$ for our spanning tree.
Every vertex $v_i\not \in \{v_{i_1},
v_{i_2},\ldots, v_{i_k}\}$ must be reachable using edges of $T$ from a
unique element of $\{v_{i_1}, v_{i_2},\ldots, 
v_{i_k}\}$. If $i<i_1$ then this element can only be $v_{i_1}$, if
$i>i_k$ then this element can only be $v_{i_k}$. In the remaining cases
$i_j<i<i_{j+1}$ holds for some $j$ and $v_{i}$ is either reachable from
$v_{i_j}$ or from $v_{i_{j+1}}$. To avoid creating a cycle and keep $T$
connected,  there must be exactly one $i\in \{i_j, i_j+1,\ldots
i_{j+1}-1\}$ such that there is no edge between $v_i$ and $v_{i+1}$ in $T$.  
This $i$ may be selected in $(i_{j+1}-i_j)$ ways and this choice may be
performed independently for $j=1,2,\ldots,k-1$. After making these
choices, there are $t^{m-k}$ ways to select the remaining edges of $T$.
\end{proof}  

Next we use Proposition~\ref{prop:theta} to compute $F(m), G(m)$ and
$H(m)$. In our computation, the following Lemma will play a key role.
\begin{lemma}
  \label{lem:productbin}
The following identities hold for all $k,m\geq 2$:  
\begin{equation}
\label{eq:e1}
\sum_{1<i_1<i_2<\cdots<i_k<m} \prod_{j=1}^{k-1}
(i_{j+1}-i_j)=\binom{m+k-3}{2k-1}.
\end{equation}
\begin{equation}
\label{eq:e2}
\sum_{1=i_1<i_2<\cdots<i_k< m}
\prod_{j=1}^{k-1} (i_{j+1}-i_j)=\binom{m+k-3}{2k-2}
\end{equation}
\begin{equation}
\label{eq:e3}
\sum_{1<i_1<i_2<\cdots<i_k= m}
\prod_{j=1}^{k-1} (i_{j+1}-i_j)=\binom{m+k-3}{2k-2}
\end{equation}
\begin{equation}
\label{eq:e4}
\sum_{1=i_1<i_2<\cdots<i_k= m}
\prod_{j=1}^{k-1} (i_{j+1}-i_j)=\binom{m+k-3}{2k-3}
\end{equation}
\end{lemma}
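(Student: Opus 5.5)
We prove all four identities with one combinatorial interpretation. A product $\prod_{j=1}^{k-1}(i_{j+1}-i_j)$ counts the ways of choosing, for each gap $j$, one integer $c_j$ with $i_j\le c_j<i_{j+1}$. Hence each left hand side counts interleaved sequences
$$
i_1\le c_1<i_2\le c_2<i_3\le\cdots\le c_{k-1}<i_k ,
$$
with the boundary conditions on $i_1$ and $i_k$ given in the respective identity. The plan is to turn every weak inequality into a strict one by shifting, so that each configuration becomes an arbitrary strictly increasing sequence of prescribed length inside an interval of prescribed size. Each identity then reduces to a single binomial coefficient.

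For \eqref{eq:e1}, set $x_{2j-1}=i_j+j-1$ for $j=1,\ldots,k$ and $x_{2j}=c_j+j$ for $j=1,\ldots,k-1$. Then $i_j\le c_j$ becomes $x_{2j-1}<x_{2j}$. The condition $c_j<i_{j+1}$, i.e.\ $c_j\le i_{j+1}-1$, becomes $x_{2j}=c_j+j\le i_{j+1}+j-1<i_{j+1}+j=x_{2j+1}$. So $x_1<x_2<\cdots<x_{2k-1}$ is strictly increasing, and the map is a bijection onto such sequences. The constraints $2\le i_1$ and $i_k\le m-1$ translate into $2\le x_1$ and $x_{2k-1}\le m+k-2$. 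Therefore the count is the number of $(2k-1)$-subsets of $\{2,\ldots,m+k-2\}$, a set of $m+k-3$ elements, which is $\binom{m+k-3}{2k-1}$.

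For \eqref{eq:e2} we fix $i_1=1$, so $x_1=1$ is forced and only $x_2<\cdots<x_{2k-1}$ remain free. Since $x_2=c_1+1\ge 2$ and $x_{2k-1}\le m+k-2$, these form a $(2k-2)$-subset of a set of $m+k-3$ elements, giving $\binom{m+k-3}{2k-2}$. Identity \eqref{eq:e3} follows symmetrically: fixing $i_k=m$ forces $x_{2k-1}=m+k-1$, and the free entries $x_1<\cdots<x_{2k-2}$ satisfy $x_1\ge 2$ and $x_{2k-2}=c_{k-1}+k-1\le m+k-2$. Alternatively, \eqref{eq:e3} follows from \eqref{eq:e2} via the reflection $i\mapsto m+1-i$, which preserves the gaps. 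For \eqref{eq:e4} both endpoints are fixed, leaving $2k-3$ free entries in $\{2,\ldots,m+k-2\}$, hence $\binom{m+k-3}{2k-3}$.

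The only delicate step is the bookkeeping of the endpoint ranges under the shift, particularly the last free coordinate. In \eqref{eq:e2} this is $x_{2k-1}=i_k+k-1\le m+k-2$, while in \eqref{eq:e3} it is $x_{2k-2}=c_{k-1}+k-1\le m+k-2$, which uses $c_{k-1}\le i_k-1=m-1$. I will check the formulas against the boundary case $k=2$ by hand. In \eqref{eq:e4} the sum has the single term $m-1$, and $\binom{m-1}{1}=m-1$ agrees. In \eqref{eq:e1} one checks $\sum_{2\le a<b\le m-1}(b-a)=\binom{m-1}{3}$. The hypothesis $k,m\ge2$ ensures these index ranges make sense.
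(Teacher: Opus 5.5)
Your proof is correct and is essentially the paper's argument. The paper encodes each weighted configuration as a $(2k-1)$-subset $j_1<\cdots<j_{2k-1}$ of $\{2,\ldots,m+k-2\}$ with $j_{2l-1}=i_l+l-1$, lets the even-indexed entries account for the $i_{l+1}-i_l$ factors, and then pins $j_1=1$ and/or $j_{2k-1}=m+k-1$ for \eqref{eq:e2}--\eqref{eq:e4}; your auxiliary $c_j$ with $x_{2j}=c_j+j$ simply makes this same bijection and its inverse explicit.
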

\begin{proof}
We provide all details of \eqref{eq:e1}, the proof of the remaining
equalities is completely analogous.
We may select a $2k-1$ element subset $\{j_1,\ldots,j_{2k-1}\}$
satisfying $2\leq j_1<\cdots<j_{2k-1}\leq m+k-2$  of
$\{2,\ldots,m+k-2\}$ in two steps, as follows:
\begin{enumerate}
\item First we select the odd indexed entries $j_1, j_3,\ldots j_{2k-1}$.
\item  Introducing $i_1=j_1$, $i_2=j_3-1$, \ldots
  $i_k=j_{2k-1}-k+1$, there are $i_2-i_1$ ways to select $j_2$, $i_3-i_3$ ways
  to select $j_4$, and so on, $i_k-i_{k-1}$ ways to select $j_{2k-2}$.  
\end{enumerate}  
To prove the remaining identities we can follow the same procedure,
after making some adjustments. In~\eqref{eq:e2} we are
selecting a $2k-1$ element subset of $\{1,\ldots,m+k-2\}$ 
but we are fixing $j_1=1$, hence we are freely selecting $2k-2$ elements
from the same set $\{2,\ldots,m+k-2\}$ . Similarly, in~\eqref{eq:e3} we
are fixing $j_{2k-1}=m+k-1$ and we are selecting $2k-2$ elements out of
$m+k-3$ elements. Finally in in~\eqref{eq:e4} we are selecting both
$j_1=1$ and $j_{2k-1}=m+k-1$ outside the set $\{2,\ldots,m+k-2\}$ and
then we are selecting freely the remaining $2k-3$ elements.
\end{proof}  

\begin{proposition}
\label{prop:Thetamain}
For a fixed pair of positive integers $(a,b)$ let $\Phi(m)$ be the function 
$$
\Phi(m)=
\begin{cases}
  1 & \text{if $m=0$,}\\
  (a+b)x & \text{if $m=1$,}\\
  \Theta(a,\underbrace{4,\ldots,4}_{m-2},b;2) & \text{if $m\geq 2$.}\\
\end{cases}  
$$
Then 
\begin{align*}
  \sum_{m\geq 0} \Phi(m)\cdot x^m&= 1+\frac{x^2(16-2a-2b)+(a+b-4)x}{(1-2x)^2}\\
  &+
\left(64x^2+8(a+b)x(1-2x)
+ab(1-2x)^2 \right)\cdot \frac{x^2}{(4x^2 - 8x + 1)(1 - 2x)^2}  
\end{align*}
\end{proposition}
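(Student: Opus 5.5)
We expand $\Phi(m)$ for $m\geq 2$ with Proposition~\ref{prop:theta}, specialized to $t=2$, $a_1=a$, $a_m=b$ and $a_i=4$ for $1<i<m$. We then collapse the inner sums with Lemma~\ref{lem:productbin} and sum over $m$ by generating functions.

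\textbf{Step 1: split by endpoints.} We split the sum over $1\leq i_1<\cdots<i_k\leq m$ in \eqref{eq:Theta} into four classes, according to whether $i_1=1$ and whether $i_k=m$. The product $a_{i_1}\cdots a_{i_k}$ then equals, respectively,
\begin{itemize}
\item[] $4^k$ when neither endpoint is used;
\item[] $a\,4^{k-1}$ when $i_1=1$ but $i_k<m$;
\item[] $b\,4^{k-1}$ when $i_1>1$ but $i_k=m$;
\item[] $ab\,4^{k-2}$ when $i_1=1$ and $i_k=m$.
\end{itemize}
The combinatorial factor $\prod_j(i_{j+1}-i_j)$ summed over each class is given by \eqref{eq:e1}--\eqref{eq:e4}. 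This yields
\begin{align*}
\Phi(m)&=\sum_{k}2^{m-k}\Bigl(4^k\tbinom{m+k-3}{2k-1}+(a+b)4^{k-1}\tbinom{m+k-3}{2k-2}\\
&\qquad+ab\,4^{k-2}\tbinom{m+k-3}{2k-3}\Bigr).
\end{align*}
Lemma~\ref{lem:productbin} is stated only for $k\geq 2$, so the $k=1$ terms must be checked by hand. With no interior gaps, the four class sums are $m-2$, $1$, $1$ and $0$. These agree with the binomials $\binom{m-2}{1}$, $\binom{m-2}{0}$, $\binom{m-2}{0}$ and $\binom{m-2}{-1}=0$, so the formula above holds uniformly in $k\geq1$.

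\textbf{Step 2: sum over $m$.} We substitute $n=m+k-3$, so that $2^{m-k}4^kx^m=8x^3\,(2x)^n x^{-k}$. We then use
$$\sum_n\binom{n}{r}y^n=\frac{y^r}{(1-y)^{r+1}}\qquad\text{with } y=2x.$$
Each of the three families becomes a prefactor times a geometric series in $q=4x/(1-2x)^2$. Summing over $k$ turns $\frac1{1-q}$ into $\frac{(1-2x)^2}{4x^2-8x+1}$. The three families produce the numerator terms $64x^2$, $8(a+b)x(1-2x)$ and $ab(1-2x)^2$ respectively, after factoring out the common $\frac{x^2}{(4x^2-8x+1)(1-2x)^2}$.

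\textbf{Step 3: boundary corrections.} The term $1+\frac{x^2(16-2a-2b)+(a+b-4)x}{(1-2x)^2}$ must account for two discrepancies:
\begin{itemize}
\item[] the binomial-sum generating functions, extended formally to all $n\geq0$, include spurious contributions at $m=0,1$ (and possibly in the $k=1$ terms) that must be subtracted;
\item[] the values $\Phi(0)=1$ and $\Phi(1)$ are prescribed separately.
\end{itemize}
We compute the formal contributions at $m=0$ and $m=1$ from each family, subtract them, add the prescribed $\Phi(0)$ and $\Phi(1)$, and simplify over the denominator $(1-2x)^2$.

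\textbf{Main obstacle and check.} The main obstacle is this bookkeeping of small cases: deciding the exact ranges of $n$ and $k$ for which each binomial identity is valid, and recovering the precise correction term. As a check, we will compare the coefficients of $x^0,x^1,x^2,x^3$ of the final expression with direct counts. For instance, $\Phi(2)=\Theta(a,b;2)=ab+2(a+b)$ by Proposition~\ref{prop:thetarec}.
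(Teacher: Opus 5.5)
Your Steps 1 and 2 follow the paper's proof. Like the paper, you specialize Proposition~\ref{prop:theta} at $t=2$, split according to whether $i_1=1$ and whether $i_k=m$, apply Lemma~\ref{lem:productbin}, and sum a geometric series in $q=4x/(1-2x)^2$. Your check that the binomial expressions stay valid for $k=1$ is correct.

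The gap is Step 3. That is where the middle summand of the statement has to come from, and your bookkeeping there points the wrong way. The numerators $64x^2$, $8(a+b)x(1-2x)$ and $ab(1-2x)^2$ arise only from $\sum_{k\geq 2}q^k=q^2/(1-q)=\frac{16x^2}{(1-2x)^2(4x^2-8x+1)}$. If you sum over $k\geq 1$, as your Step~1 invites, you get $q/(1-q)=\frac{4x}{4x^2-8x+1}$ and different numerators. Moreover, the closed form $y^r/(1-y)^{r+1}$ at $r=2k-3=-1$ then produces a bogus term $abx/4$, although $\binom{n}{-1}=0$.

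If you restrict to $k\geq 2$, there is nothing spurious to subtract at $m=0,1$. The three binomials vanish unless $m\geq k+2$, $m\geq k+1$ and $m\geq k$ respectively. So $1+\frac{x^2(16-2a-2b)+(a+b-4)x}{(1-2x)^2}$ is not a correction term. It is $\Phi(0)$ plus the $k=1$ contribution, and that contribution must be \emph{added}. The $k=1$ part of \eqref{eq:Theta} is $2^{m-1}(a+b+4(m-2))$, and $\sum_{m\geq 1}2^{m-1}(a+b-8+4m)x^m$ is exactly that fraction. This is how the paper argues: it sums $k=1$ directly and sends only $k\geq 2$ through the four cases.

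Next, look at the $m=1$ term of that sum. It equals $a+b-4$, and it is the entire coefficient of $x$ on the right-hand side, since the $k\geq 2$ part starts at $x^2$. So the identity holds only with $\Phi(1)=a+b-4$, not with the printed $(a+b)x$, which is not even a constant. The paper's proof silently uses $a+b-4$ by starting its $k=1$ sum at $m=1$.

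This is also the value the later applications need. With $a=b=6$ it gives $F(1)=8$, and with $(a,b)=(6,4)$ it gives $G(1)=\Theta(6;2)=6$; taking $a+b$ instead would give $12$ and $10$. Your plan to ``add the prescribed $\Phi(1)$'' therefore cannot yield the stated formula. You need to flag the misprint and prove the identity with $\Phi(1)=a+b-4$; the $x^1$ coefficient check you deferred would have exposed this.
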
  
\begin{proof}
 We apply Proposition~\ref{prop:theta} to compute $\Phi(m)$ for
 $m\geq$ 2. For this purpose we substitute $t=2$.  The sum of all terms
 satisfying $k=1$ on the right hand side of \eqref{eq:Theta} is
 $2^{m-1}(a+(m-2)4+b)$, and their contribution to $\sum_{m\geq 0}
 \Phi(m)x^m$ is
 \begin{align}
   \nonumber
   \sum_{m\geq 1} (a+b-8+4m)\cdot 2^{m-1}x^m&=
   \frac{(a+b-8)x}{1-2x}+\frac{4x}{(1-2x)^2}\\
   &=\frac{x^2(16-2a-2b)+(a+b-4)x}{(1-2x)^2}.
 \end{align}

 Consider now the contribution of all terms
 satisfying $k\geq 2$. To decide what to
 substitute into the factors $a_{i_j}$ we need to distinguish four cases.

 {\bf\noindent Case 1:} $1<i_1<i_k<m$ holds. In this case $a_{i_1}\cdots
a_{i_k}=4^k$. By~\eqref{eq:e1}, all terms covered in
this case contribute
$$
2^{m-k}4^k\cdot \sum_{1< i_1<\cdots<i_k< m} \prod_{j=1}^{k-1}
(i_{j+1}-i_j)=2^{m+k}\binom{m+k-3}{2k-1}. 
$$
For a fixed $k\geq 2$ the contribution of all terms to $\sum_{m\geq 0}
 \Phi(m)x^m$ covered in this case is
 \begin{align*}
\sum_{m\geq k+2}& 2^{m+k}\binom{m+k-3}{2k-1}\cdot x^m
=\frac{2^k}{(2x)^{k-3}} \sum_{m\geq k+2} \binom{m+k-3}{2k-1}\cdot
(2x)^{m+k-3}\\
&=\frac{8}{x^{k-3}}\cdot
\frac{(2x)^{2k-1}}{(1-2x)^{2k}}=\frac{2^{2k+2}x^{k+2}}{(1-2x)^{2k}}=4x^2\cdot
\left(\frac{4x}{(1-2x)^2}\right)^k.
\end{align*}

{\bf\noindent Case 2:} $1=i_1<i_k<m$ holds. In this case $a_{i_1}\cdots
a_{i_k}=a\cdot 4^{k-1}$. By~\eqref{eq:e2}, all terms covered in
this case contribute
$$
2^{m-k}\cdot a\cdot 4^{k-1}\cdot \sum_{1=i_1<i_2<\cdots<i_k< m}
\prod_{j=1}^{k-1} (i_{j+1}-i_j)=a\cdot 2^{m+k-2}\binom{m+k-3}{2k-2}. 
$$
For a fixed $k\geq 2$ the contribution of all terms to $\sum_{m\geq 0}
 \Phi(m)x^m$ covered in this case is
 \begin{align*}
\sum_{m\geq k+1}& a\cdot 2^{m+k-2}\binom{m+k-3}{2k-2} \cdot x^m
=\frac{a\cdot  2^{k-2}}{(2x)^{k-3}} \sum_{m\geq k+1} \binom{m+k-3}{2k-2}\cdot
(2x)^{m+k-3}\\ 
&=\frac{2a}{x^{k-3}}\cdot \frac{(2x)^{2k-2}}{(1-2x)^{2k-1}}=\frac{a\cdot
  2^{2k-1}x^{k+1}}{(1-2x)^{2k-1}}=\frac{ax(1-2x)}{2}\cdot
\left(\frac{4x}{(1-2x)^2}\right)^k.
\end{align*}

{\bf\noindent Case 3:} $1<i_1<i_k=m$ holds. In this case $a_{i_1}\cdots
a_{i_k}=b\cdot 4^{k-1}$. By~\eqref{eq:e3}, all terms covered in
this case contribute
$$
2^{m-k}\cdot b\cdot 4^{k-1}\cdot \sum_{1<i_1<i_2<\cdots<i_k=m}
\prod_{j=1}^{k-1} (i_{j+1}-i_j)=b\cdot 2^{m+k-2}\binom{m+k-3}{2k-2}. 
$$
Similarly to the previous case, for a fixed $k\geq 2$ the contribution
of all terms to $\sum_{m\geq 0}\Phi(m)x^m$ covered in this case is
 \begin{align*}
\sum_{m\geq k+1} b\cdot 2^{m+k-2}\binom{m+k-3}{2k-2} \cdot x^m
&=\frac{bx(1-2x)}{2}\cdot
\left(\frac{4x}{(1-2x)^2}\right)^k.
\end{align*}

{\bf\noindent Case 4:} $1=i_1<i_k=m$ holds. In this case $a_{i_1}\cdots
a_{i_k}=ab\cdot 4^{k-2}$. By~\eqref{eq:e4}, all terms covered in
this case contribute
$$
2^{m-k}\cdot ab\cdot 4^{k-2}\cdot \sum_{1=i_1<i_2<\cdots<i_k=m}
\prod_{j=1}^{k-1} (i_{j+1}-i_j)=ab\cdot 2^{m+k-4}\binom{m+k-3}{2k-3}. 
$$
For a fixed $k\geq 2$ the contribution of all terms to $\sum_{m\geq 0}
 \Phi(m)x^m$ covered in this case is
 \begin{align*}
\sum_{m\geq k}& ab\cdot 2^{m+k-4}\binom{m+k-3}{2k-3}\cdot x^m
=\frac{ab\cdot 2^{k-4}}{(2x)^{k-3}}\cdot \sum_{m\geq k} \binom{m+k-3}{2k-3}\cdot
(2x)^{m+k-3}\\
&=\frac{ab\cdot 2^{k-4}}{(2x)^{k-3}}\cdot
\frac{(2x)^{2k-3}}{(1-2x)^{2k-2}}=\frac{ab\cdot
  2^{2k-4}x^{k}}{(1-2x)^{2k-2}}=
\frac{ab(1-2x)^2}{16}\cdot
\left(\frac{4x}{(1-2x)^2}\right)^k.
\end{align*}

Summing over the contributions of all $k\geq 2$ in all four cases yields
\begin{align}
\label{eq:kgeq2}  
 \left(4x^2+\frac{(a+b)x(1-2x)}{2}+\frac{ab(1-2x)^2}{16} \right)\cdot   \sum_{k\geq 2} \left(\frac{4x}{(1-2x)^2}\right)^k
  \end{align}  
Here
\begin{align*}
\sum_{k\geq 2} \left(\frac{4x}{(1-2x)^2}\right)^k&=
\frac{1}{1-\frac{4x}{(1-2x)^2}}-1-\frac{4x}{(1-2x)^2}\\
   &=\frac{16x^2}{(4x^2 - 8x + 1)(1 - 2x)^2}.
\end{align*}
\end{proof}  
Substituting $a=6$ and $b=6$ into Proposition~\ref{prop:Thetamain}
we obtain that the generating function for the numbers $F(m)$ is
\begin{equation}
\label{eq:fgfun}
\FF(x)=\sum_{m\geq 0} F(m)\cdot x^m=\frac{1}{1-8x+4x^2}.
\end{equation}  
Similarly, substituting $a=6$ and $b=4$ into Proposition~\ref{prop:Thetamain}
yields the generating function for the numbers $G(m)$:
\begin{equation}
\label{eq:ggfun}
\GG(x)=\sum_{m\geq 0} G(m)\cdot x^m=\frac{(1-2x)}{1-8x+4x^2}.
\end{equation}  
Finally, substituting $a=4$ and $b=4$ into
Proposition~\ref{prop:Thetamain} yields the generating function for the
numbers $H(m)$: 
\begin{equation}
\label{eq:hgfun}
\HH(x)=\sum_{m\geq 0} H(m)\cdot x^m=\frac{(1-2x)^2}{1-8x+4x^2}.
\end{equation}  
\begin{remark}
The numbers $F(m)$ are listed as sequence A099156 in the
OEIS~\cite{OEIS}.
They are also given by the formula $F(m)=2^m\cdot U_m(2)$ where $U_m(x)$ is
the $m$-th Tchebyshev polynomial of the second kind. The numbers $G(m)$
are listed as sequence A102591 in the OEIS~\cite{OEIS}. They are also
given by the formula:
$$
G(m)=\sum_{k=0}^m \binom{2m+1}{2k}\cdot 3^{m-k}.
$$
\end{remark}  
\begin{theorem}
  The generating function of the
  number $T(m)$ of spanning hypertrees of $(\alpha_m,\sigma_m)$ is given
  by
  $$
\TT(x)=\sum_{m\geq 0} T(m)\cdot x^m=\frac{(1 - 2x)^2}{1-8x+8x^2}.
  $$
\end{theorem}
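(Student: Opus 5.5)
The plan is to translate Proposition~\ref{prop:tmdec} into a generating function identity. It writes $T(m)$ as $H(m)$ plus a signed sum over sparse subsets $\{i_1<\cdots<i_k\}$ of $\{1,\ldots,m-1\}$ of products $G(i_1-1)\prod_j F(i_{j+1}-i_j-2)\,G(m-i_k)$. Such a product is a composition structure: for fixed $k\geq 1$, the index $m$ decomposes as
$$m=(i_1-1)+\sum_{j=1}^{k-1}(i_{j+1}-i_j-2)+(m-i_k)+(2k-1).$$
Here each removed column contributes one unit, and each gap between consecutive selected columns contributes one further unit beyond the part counted by $F$. The sparsity condition $i_{j+1}-i_j\geq 2$ is exactly the condition that the $F$-arguments are nonnegative. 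The end arguments $i_1-1$ and $m-i_k$ range over all nonnegative integers, since $1\le i_1$ and $i_k\le m-1$ only force $m-i_k\ge 1$. I will check this boundary convention against small cases: with $G(0)=1$ the last factor should be allowed to be $G(m-i_k)$ with $m-i_k\ge1$, so the last part uses one more unit, and the bookkeeping must be adjusted accordingly. Equivalently, count $m-1$ white columns and split the total as $(i_1-1)+\dots+(m-1-i_k)+k+(k-1)$, with the last factor being $G$ evaluated at $(m-1-i_k)+1$.

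Summing over all such compositions then gives, for each $k\geq1$, a contribution of the form
$$(-4)^k\,x^{2k-1}\cdot \GG(x)\cdot \FF(x)^{k-1}\cdot \frac{\GG(x)-1}{x},$$
or whatever the exact boundary shift turns out to be. I will pin this down by matching the $k=1$ terms against $T(3)$ computed directly from Proposition~\ref{prop:tmdec}. The $k\ge1$ terms then form a geometric series in $-4x^2\FF(x)$, which sums to
$$\TT(x)=\HH(x)+\frac{-4x^2\,\GG(x)\,\widetilde{\GG}(x)}{1+4x^2\FF(x)},$$
where $\widetilde{\GG}$ denotes the appropriately shifted version of $\GG$.

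Next I substitute the closed forms \eqref{eq:fgfun}, \eqref{eq:ggfun} and \eqref{eq:hgfun}. Writing $D=1-8x+4x^2$, we have $1+4x^2\FF=(D+4x^2)/D=(1-8x+8x^2)/D$, which already produces the desired denominator $1-8x+8x^2$. The remaining computation is the algebraic simplification
$$\frac{(1-2x)^2}{D}-\frac{4x^2(1-2x)\cdot(\text{shifted }G)}{D(1-8x+8x^2)}=\frac{(1-2x)^2}{1-8x+8x^2}.$$
With unshifted $\GG$ on both ends, the numerator becomes $(1-2x)^2(1-8x+8x^2)-4x^2(1-2x)^2=(1-2x)^2D$. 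That is exactly what is needed, which suggests that the correct bookkeeping gives weight $x^{2k}$ with factors $\GG(x)^2\FF(x)^{k-1}$, i.e.\ $m=(i_1-1)+\sum(\cdot)+(m-i_k)+2k$ counting faces rather than columns. I would verify this reading using the low-order terms of the series together with the example values $H(1)=8$ and $F(1)=8$.

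Finally I would handle the small-$m$ exceptions: $H(1)=8$, the constant term $H(0)=1$, and the special values of $\Phi(1)$. I would also confirm that the $\lfloor(m-1)/2\rfloor$ upper limit is automatic from the composition count. The main obstacle is the index bookkeeping at the boundary, namely whether the end pieces contribute $G(i_1-1)$ and $G(m-i_k)$ with the $2k$ shift. I will resolve it by listing the faces of the ladder consumed by each removed white vertex: the two faces adjacent to its column merge into the 8-/6-sided pieces. After that, the geometric-series summation and the polynomial identity above are routine.
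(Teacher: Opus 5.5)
Your route is the paper's: rewrite Proposition~\ref{prop:tmdec} as $\TT(x)=\HH(x)+\sum_{k\ge 1}(-4x^2)^k\GG(x)^2\FF(x)^{k-1}$, sum the geometric series to $\HH(x)-4x^2\GG(x)^2/(1+4x^2\FF(x))$, and substitute. Your closing algebra is correct.

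The gap is the one step that is not algebra. You never show that the $k$-th term carries $x^{2k}$ with unshifted $\GG$ at both ends. You infer it because it produces the target. The decomposition you then write, $m=(i_1-1)+\sum_j(i_{j+1}-i_j-2)+(m-i_k)+2k$, is false: its right side telescopes to $m+1$.

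With the factors exactly as printed in Proposition~\ref{prop:tmdec}, your first computation is the right one. The arguments sum to $m-2k+1$, the $k$-th term is $(-4)^kx^{2k-1}\GG\FF^{k-1}(\GG-1)$, and the theorem does not follow. The printed formula even gives $T(3)=240-4(44+36)<0$. So your planned check against Proposition~\ref{prop:tmdec} is circular and cannot settle the bookkeeping. Likewise, the bound $k\le\lfloor (m-1)/2\rfloor$ is not automatic. Sparse subsets of $\{1,\dots,m-1\}$ can have $\lfloor m/2\rfloor$ elements, and those terms are needed; for $m=4$, the set $\{1,3\}$ contributes $+16$.

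What is missing is the face count you allude to, done correctly. The white vertex in column $i$ is an end of the rung separating faces $i$ and $i+1$. Deleting it does not merge these faces into $8$- or $6$-sided pieces; it destroys both. They join the outer face, and their edges not shared with another bounded face become bridges.

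The surviving blocks are therefore:
\begin{itemize}
\item faces $1,\dots,i_1-1$;
\item faces $i_j+2,\dots,i_{j+1}-1$ for each $j$;
\item faces $i_k+2,\dots,m$.
\end{itemize}
So the right-end factor must be $G(m-1-i_k)$, as left-right symmetry also demands. The arguments then sum to exactly $m-2k$, each end argument ranges over all nonnegative integers, and $k$ runs up to $\lfloor m/2\rfloor$. This gives $(-4x^2)^k\GG^2\FF^{k-1}$, which is what the paper's proof actually uses, silently correcting the statement of Proposition~\ref{prop:tmdec}.

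For an independent check, apply Theorem~\ref{thm:shtrees} directly rather than Proposition~\ref{prop:tmdec}. For $m=2$, the graph $G(\alpha_2,\sigma_2)$ is a theta graph with paths of lengths $4,2,4$ between its two white vertices, so it has $32$ spanning trees. Deleting either white vertex leaves a tree, and deleting both disconnects the graph. Hence $T(2)=32-2-2=28$, which is the coefficient of $x^2$ in $(1-2x)^2/(1-8x+8x^2)$.
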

\begin{proof}
  By Proposition~\ref{prop:tmdec} we have
  \begin{align}
  \label{tfun}  
    \TT(x)&=\HH(x)+\sum_{k\geq 1}
    (-4x^2)^k\cdot  \GG(x)\cdot \FF(x)^{k-1} \cdot \GG(x)
    =\HH(x) -\frac{4x^2\cdot \GG(x)^2}{1+4x^2\cdot \FF(x)}.
\end{align}    
Substituting the formulas \eqref{eq:fgfun}, \eqref{eq:ggfun} and
\eqref{eq:hgfun} into \eqref{tfun} yields the stated formula.
\end{proof}  
\begin{remark}
The sequence $(T(m)/2^m)_{m= 0}^{\infty}$ is sequence A003480 in the
OEIS~\cite{OEIS} and it is the solution of several combinatorial problems. 
\end{remark}  

\section{Concluding remarks}

Part of our work on the Whitney polynomial~\cite{Cori-Hetyei2} focuses
on constructing the directed medial map of a hypermap and generalizing
or proving analogues of several
results of Arratia, Bollob\'as, Ellis-Monaghan, Martin and
Sorkin~\cite{Arratia-Bollobas,Bollobas-cp,Ellis-Monaghan-mp,Ellis-Monaghan-mpmisc,Ellis-Monaghan-cpp,Ellis-Monaghan-exploring,Martin-Thesis,Martin}
on the circuit partition polynomials of Eulerian digraphs and the medial
graph of a plane graph. As pointed out
in~\cite[Remark~5.3]{Cori-Hetyei2}, a key difference between our
generalizations and the ones present in the above cited literature is, that
we have to consider hypermaps embedded in a surface and the {\em
  Eulerian states} we define must be {\em noncrossing}. This noncrossing
condition is automatically satisfied in the case of maps, but needs
verification for hyperedges of length greater than $2$. That said,
our directed medial map constructions calls for associating the cycle
$(1^-,1^+,2^-,2^+,3^-,3^+)$ to the hyperedge $(1,2,3)$ and then selecting a
noncrossing matching on the set $\{1^-,1^+,2^-,2^+,3^-,3^+\}$ that matches
each point with a point of opposite sign. Out of the $3!=6$ matchings
between points of opposite signs exactly one is crossing. It seems
reasonable to conjecture that the standard results on circuit partition
polynomials and our results may be more closely related in the case of
hypermaps with short hyperedges: perhaps the use of the associated
$2$-colored map introduced in Section~\ref{sec:a2cm} could help express
this connection explicitly.

We tried and failed to generalize Bernardi's
result~\cite{Bernardi-embeddings} on a topological definition of a
Tutte polynomial to our hypermap setting. A plausible generalization of the {\em
  Bernardi tour}  of a spanning tree in a topological graph to hypermaps
and spanning hypertrees is not hard
to find, and we have done so in~\cite{Cori-Hetyei} by taking the dual of
the tour described by Cori~\cite{Cori-hrec} and
  Mach\`\i~\cite{Machi}. If however we try to
generalize Bernardi's definition of activities to even hypermaps with
short hyperedges, we seem to be unable to avoid dependence on the root
selection and the labeling of the points.  

Corollary~\ref{cor:almostTutte} inspires the question of finding a
recursively defined class of hypermaps with short hyperedges whose
Whitney polynomial may be computed without encountering a double loop or
double bridge in the process. We may define the Tutte polynomial of
such hypermaps by $T(\sigma,\alpha;x,y)=R(\sigma,\alpha;x-1,y-1)$ and
attempt to find a combinatorial description of its nonnegative integer
coefficients.

\section*{Acknowledgments}
The second author wishes to express his heartfelt thanks to Labri, Universit\'e
Bordeaux I, for hosting him as a visiting researcher in Spring 2024,
where a great part of this research was performed. This 
work was partially supported by a grant from the Simons Foundation 
(\#514648 to G\'abor Hetyei).

\end{document}